


\documentclass[11pt]{amsart}
\usepackage{amssymb}
\usepackage{amsmath}
\usepackage{amsfonts}
\usepackage{graphicx}

\usepackage{hyperref}
    \usepackage{aeguill}
    \usepackage{type1cm}

\theoremstyle{plain}
\newtheorem{thm}{Theorem}[section]

\newtheorem{claim}[thm]{Claim}

\newtheorem{corollary}[thm]{Corollary}

\newtheorem{example}[thm]{Example}

\newtheorem{lemma}[thm]{Lemma}

\newtheorem{proposition}[thm]{Proposition}
\newtheorem{remark}[thm]{Remark}

\newtheorem{theorem}[thm]{Theorem}
\numberwithin{equation}{section}
\newcommand{\N}{\mathbb{N}}

\newcommand{\R}{\mathbb{R}}

\newcommand{\Rn}{\mathbb{R}^n}


\DeclareMathOperator{\sop}{supp\,\!}

\DeclareMathOperator{\diam}{diam\,\!}

\usepackage[usenames,dvipsnames]{color}

\usepackage[dvipsnames]{xcolor}


\begin{document}

\title[$C^1$ and $C^{1,1}$ convex extensions of convex functions]{Whitney Extension Theorems for convex functions of the classes $C^1$ and $C^{1,\omega}$.}
\author{Daniel Azagra}
\address{ICMAT (CSIC-UAM-UC3-UCM), Departamento de An{\'a}lisis Matem{\'a}tico,
Facultad Ciencias Matem{\'a}ticas, Universidad Complutense, 28040, Madrid, Spain }
\email{azagra@mat.ucm.es}

\author{Carlos Mudarra}
\address{ICMAT (CSIC-UAM-UC3-UCM), Calle Nicol\'as Cabrera 13-15.
28049 Madrid SPAIN}
\email{carlos.mudarra@icmat.es}

\date{September 8, 2016}

\keywords{convex function, $C^{1,\omega}$ function, Whitney extension theorem}

\thanks{D. Azagra was partially supported by Ministerio de Educaci\'on, Cultura y Deporte, Programa Estatal de Promoci\'on del Talento y su Empleabilidad en I+D+i, Subprograma Estatal de Movilidad. C. Mudarra was supported by Programa Internacional de Doctorado Fundaci\'on La Caixa--Severo Ochoa. Both authors partially supported by MTM2012-34341.}

\subjclass[2010]{54C20, 52A41, 26B05, 53A99, 53C45, 52A20, 58C25, 35J96}

\begin{abstract}
Let $C$ be a subset of $\mathbb{R}^n$ (not necessarily convex), $f:C\to\mathbb{R}$ be a function, 
and $G:C\to\mathbb{R}^n$ be a uniformly continuous function, with modulus of continuity $\omega$. We provide a necessary and sufficient condition on $f$, $G$ for the existence of a {\em convex} function $F\in C^{1, \omega}(\mathbb{R}^n)$ such that $F=f$ on $C$ and $\nabla F=G$ on $C$, with a good control of the modulus of continuity of $\nabla F$ in terms of that of $G$. On the other hand, assuming that $C$ is compact, we also solve a similar problem for the class of $C^1$ convex functions on $\mathbb{R}^n$, with a good control of the Lipschitz constants of the extensions (namely, $\textrm{Lip}(F)\lesssim \|G\|_{\infty}$). Finally, we give a geometrical application concerning interpolation of compact subsets $K$ of $\mathbb{R}^n$ by boundaries of $C^1$ or $C^{1,1}$ convex bodies with prescribed outer normals on $K$.
\end{abstract}

\maketitle

\section{Introduction and main results}

Throughout this paper, by a modulus of continuity $\omega$ we understand a concave, strictly increasing function $\omega:[0,\infty)\to [0, \infty)$ such that $\omega(0^{+})=0$. In particular, $\omega$ has an inverse $\omega^{-1}:[0,\beta)\to [0, \infty)$ which is convex and strictly increasing, where $\beta>0$ may be finite or infinite (according to whether $\omega$ is bounded or unbounded). Furthermore, $\omega$ is subadditive, and satisfies $\omega(\lambda t)\leq \lambda \omega (t)$ for $\lambda\geq 1$, and $\omega(\mu t)\geq \mu \omega(t)$ for $0\leq \mu\leq 1$. This in turn implies that $\omega^{-1}(\mu s)\leq \mu \omega^{-1}(s)$ for $\mu\in [0,1]$, and $\omega^{-1}(\lambda s)\geq \lambda \omega^{-1}(s)$ for $\lambda\geq 1$. It is well-known that for every uniformly continuous function $f:X\to Y$ between two metric spaces there exists a modulus of continuity $\omega$ such that $d_{Y}(f(x), f(z))\leq \omega\left(d_{X}(x,z)\right)$ for every $x,z\in X$. Slightly abusing terminology, we will say that a mapping $G:X\to Y$ has modulus of continuity $\omega$ (or that $G$ is $\omega$-continuous) if there exists $M\geq 0$ such that
$$
d_{Y}\left(G(x),G(y)\right)\leq M\omega\left(d_{X}(x,y)\right)
$$
for all $x, y\in X$.

Let $C$ be a subset of $\Rn$, $f:C\to\R$ a function, and $G:C\to\Rn$ a uniformly continuous mapping with modulus of continuity $\omega$. 
Assume also  that the pair $(f,G)$ satisfies
$$
|f(x)-f(y)-\langle G(y), x-y\rangle|\leq M|x-y|\,\omega\left(|x-y|\right) \eqno(W^{1, \omega})
$$
for every $x, y\in C$ (in the case that $\omega(t)=t$ we will also denote this condition by $(W^{1,1})$). Then a well-known version of the Whitney extension theorem for the class $C^{1,\omega}$ due to Glaeser holds true (see \cite{Glaeser, Stein}), and we get the existence of a function $F\in C^{1, \omega}(\Rn)$ such that $F=f$ and $\nabla F=G$ on $C$. 

It is natural to ask what further assumptions on $f$, $G$ would be necessary and sufficient to ensure that $F$ can be taken to be convex.
In a recent paper \cite{AM}, we solved a similar problem for the class of $C^{\infty}$, under the much more stringent assumptions that $C$ be convex and compact. We refer to the introduction of \cite{AM} and the references therein for background; see in particular \cite{FeffermanSurvey, BrudnyiBrudnyi} for an account of the spectacular progress made on Whitney extension problems in the last decade. Here we will only mention that results of this nature for the special class of convex functions have interesting applications in differential geometry, PDE theory (such as Monge-Amp\`ere equations), nonlinear dynamics, and quantum computing, see \cite{GhomiJDG2001, GhomiPAMS2002, GhomiBLMS2004, MinYan} and the references therein. 
We should also note here that, in contrast with the classical Whitney extension theorem \cite{Whitney} (concerning jets) and also with the solutions \cite{Glaeser, BrudnyiShvartsman, Fefferman2005, Fefferman2006} to the Whitney extension problem (concerning functions), in whose proofs one can use appropriate partitions of unity in order to patch local solutions together to obtain a global solution, such tools are no longer available in our setting. Moreover, further difficulties arise from the rigid global behaviour of convex functions, see Proposition \ref{rigid global behaviour of convex functions} for instance. The following example illustrates both of these issues: take any four numbers $a, b, c, d\in\R$ with $a<b<0<c<d$, and define $C=\{a, b, 0, c, d\}$ and $f(x)=|x|$ for $x\in C$. Since $C$ is a five-point set it is clear that there are infinitely many $C^{1}$ functions (even infinitely many polynomials) $F$ with $F=f$ on $C$. However, none of these $F$ can be convex on $\R$, because, as is easily checked, any convex extension $g$ of $f$ to $\R$ must satisfy $g(x)=|x|$ for every $x\in [a, d]$, and therefore $g$ cannot be differentiable at $0$.
On the other hand it should be noted that, following the works of Brudnyi-Shvartsman \cite{BrudnyiShvartsman} for the solution of the $C^{1,1}$ Whitney extension problem, and of Fefferman \cite{Fefferman2005, Fefferman2006} concerning the solutions of the $C^{m-1,1}$ and $C^{m}$ Whitney extension problems in full generality, there is a natural interpretation of the term {\em local condition} in extension theorems that refers to the existence of a {\em finiteness principle}, which states that extendibility (with controlled norm) of a function from all finite subsets of $C$ of cardinality at most $k$ (for some $k<\infty$ fixed) implies extendibility (with controlled norm) of the function defined on all of $C$. It would be very interesting to know whether such a finiteness principle holds for $C^{1,1}$ convex extension of functions. Thus the following disclaimer is in order: whenever we refer to global versus local conditions in this paper we are merely talking about difficulties like those we have mentioned, which do not contradict the existence of a finiteness principle for $C^{1,1}$ convex extension.

Let us now introduce one global condition which, we have found, is necessary and sufficient for a function $f:C\to\R$ (and a mapping $G:C\to\R^n$ with modulus of continuity $\omega$) to have a convex extension $F$ of class $C^{1, \omega}(\Rn)$ such that $\nabla F=G$ on $C$.
For a mapping $G:C\subset\Rn\to\Rn$ we will denote
\begin{equation}\label{definition of M(G,C)}
M(G, C):=\sup_{x, y\in C, \, x\neq y}\frac{|G(x)-G(y)|}{\omega(|x-y|)}.
\end{equation}
We will say that $f$ and $G$ satisfy the property $(CW^{1,\omega})$ on $C$ if either $G$ is constant, or else $0<M(G, C)<\infty$ and there exists a constant $\eta\in (0,1/2]$ such that
$$
f(x)-f(y)-\langle G(y), x-y\rangle \geq \eta \, |G(x)-G(y)| \, \omega^{-1}\left( \frac{1}{2M} |G(x)-G(y)| \right)
$$
$$
\textrm{ for all } x, y\in C, \textrm{ where } M=M(G, C). \eqno(CW^{1,\omega})
$$
Throughout this paper we will assume that $0<M<\infty$, or equivalently that $G$ is nonconstant and has modulus of continuity $\omega$. We may of course do so, because if $M=0$ then our problem has a trivial solution (namely, the function $x\mapsto f(x_0)+ \langle G(x_0), x-x_0\rangle$ defines an affine extension of $f$ to $\Rn$, for any $x_0\in C$).

In the case that $\omega(t)=t$, we will also denote this condition by $(CW^{1,1})$. That is, $(f,G)$ satisfies $(CW^{1,1})$ if and only if there exists $\delta>0$ such that
$$
f(x)-f(y)-\langle G (y), x-y \rangle \geq \delta \: | G (x)- G(y) |^{2},
\eqno(CW^{1,1})
$$ 
for all $x,y\in C$.
\begin{remark}\label{CW11 implies W11} \hfill{ }
\begin{enumerate} 
\item
If $(f,G)$ satisfies condition $(CW^{1, \omega})$ and $G$ has modulus of continuity $\omega$, then $(f,G)$ satisfies condition $(W^{1, \omega})$.
\item If for some $M>0$ the pair $(f,G)$ satisfies 
$$
f(x)-f(y)-\langle G(y), x-y\rangle \geq \frac{1}{2} \, |G(x)-G(y)| \, \omega^{-1}\left( \frac{1}{2M} |G(x)-G(y)| \right)
$$
for all $x, y\in C$, then $G$ is $\omega$-continuous and $$
\sup_{x,y\in C, x\neq y}\frac{|G(x)-G(y)|}{\omega(|x-y|)}\leq 2M.
$$
\end{enumerate}
\end{remark}
\begin{proof}
Condition $(CW^{1, \omega})$ implies that
$$
0 \leq f(x)-f(y)-\langle G(y), x-y \rangle \leq \langle G(y)-G(x), y-x \rangle , 
$$
for all $x,y \in C$ with $x\neq y$,
hence, 
$$
0 \leq \frac{f(x)-f(y)-\langle G(y), x-y \rangle}{|x-y| \omega(|x-y|)} \leq \frac{\langle G(y)-G(x), y-x \rangle}{|x-y| \omega(|x-y|)} \leq M.
$$
This shows $(1)$. The proof of $(2)$ is also easy and is left to the reader.
\end{proof}

The first of our main results is the following.
\begin{theorem}\label{C1omega convex extension}
Let $\omega$ be a modulus of continuity. Let $C$ be a (not necessarily convex) subset of $\R^n$ . Let $f:C\to\R$ be an arbitrary function, and $G:C\to\R^n$ be continuous, with modulus of continuity $\omega$. Then $f$ has a convex, $C^{1,\omega}$ extension $F$ to all of $\R^n$, with $\nabla F=G$ on $C$, if and only if $(f,G)$ satisfies $(CW^{1,\omega})$ on $C$.
\end{theorem}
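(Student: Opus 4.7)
The plan is to treat necessity and sufficiency separately; the former is a soft consequence of the regularity of $F$, while the latter demands an explicit construction calibrated to $(CW^{1,\omega})$.

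\emph{Necessity.} Let $F\in C^{1,\omega}(\Rn)$ be convex with $M=\sup_{u\neq v}|\nabla F(u)-\nabla F(v)|/\omega(|u-v|)$ finite and positive. I would first prove the quantitative inequality
$$
F(x)-F(y)-\langle\nabla F(y),x-y\rangle \;\geq\; \tfrac{1}{2}\,|\nabla F(x)-\nabla F(y)|\,\omega^{-1}\!\left(\tfrac{|\nabla F(x)-\nabla F(y)|}{2M}\right)
$$
for all $x,y\in\Rn$. This is obtained by fixing $y$, walking from $y$ in the direction $u=(\nabla F(x)-\nabla F(y))/|\nabla F(x)-\nabla F(y)|$, and combining convexity (which forces $F$ to grow at the rate $\langle\nabla F(y+tu),u\rangle$) with the upper bound $|\nabla F(y+tu)-\nabla F(y)|\le M\omega(t)$. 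Choosing $t$ so that $M\omega(t)$ equals half of $|\nabla F(x)-\nabla F(y)|$ yields the claimed estimate. Specializing to $x,y\in C$ produces $(CW^{1,\omega})$ with $\eta=1/2$.

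\emph{Sufficiency.} Given $(f,G)$ satisfying $(CW^{1,\omega})$ on $C$ with $M=M(G,C)$, set
$$
\Phi(t)=\int_{0}^{t}\omega(s)\,ds,
$$
a $C^{1}$ convex increasing function with $\Phi'=\omega$. For each $y\in C$ define the convex $C^{1,\omega}$ tangent ``paraboloid from above''
$$
P_{y}(x)=f(y)+\langle G(y),x-y\rangle + C_{0}\,M\,\Phi(|x-y|),
$$
for a suitable absolute constant $C_{0}$, and take as candidate extension
$$
F(x)=\operatorname{conv}\!\Big(\inf_{y\in C}P_{y}\Big)(x),
$$
the convex envelope of the infimum of all these paraboloids. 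By construction $F$ is convex with $F\le P_{y_{0}}$, hence $F(y_{0})\le f(y_{0})$ on $C$. For the reverse inequality I would exhibit the affine map $a(x)=f(y_{0})+\langle G(y_{0}),x-y_{0}\rangle$ as a global minorant of every $P_{z}$: minimizing $P_{z}-a$ in $x$ (the critical point lies at distance $\omega^{-1}(|G(y_{0})-G(z)|/(C_{0}M))$ from $z$ along the direction of $G(y_{0})-G(z)$) reduces the inequality $a\le P_{z}$ to precisely $(CW^{1,\omega})$. Consequently $a\le F$, so $F(y_{0})=f(y_{0})$, and the sandwich $a(x)\le F(x)\le P_{y_{0}}(x)$ forces $\nabla F(y_{0})=G(y_{0})$.

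\emph{Main obstacle.} The delicate step is proving that $F\in C^{1,\omega}(\Rn)$ globally, with a modulus controlled by $M$. The convex envelope of a smooth family is not automatically smooth: on the set where $F$ coincides with some $P_{y}$, $F$ inherits the regularity of $P_{y}$; on the complementary set $F$ must be affine along some direction. Matching these two regimes across the (unknown) contact set, and showing that the resulting $\nabla F$ has modulus $\omega$ with a constant comparable to $M$, is the heart of the argument. I expect this to be handled by a Glaeser-type continuous-selection procedure on the active paraboloids, leveraging the quantitative margin in $(CW^{1,\omega})$ together with Remark \ref{CW11 implies W11}, which already guarantees that $G$ is $\omega$-continuous on $C$. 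A density/approximation reduction is likely also needed to pass from finite or closed $C$ to the general case.
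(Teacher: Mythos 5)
Your sufficiency construction --- cap each jet from above by the $\omega$-paraboloid $P_y(x)=f(y)+\langle G(y),x-y\rangle+2M\Phi(|x-y|)$ and set $F=\operatorname{conv}\bigl(\inf_{y\in C}P_y\bigr)$ --- is a genuinely different route from the paper's, which instead Whitney-extends $f$, forms the \emph{supremum} $m_C(f)(x)=\sup_{y\in C}\{f(y)+\langle G(y),x-y\rangle\}$ of the tangent affine minorants, factors $m_C(f)=\ell+c\circ P$ with $c$ coercive on a subspace (Proposition \ref{rigid global behaviour of convex functions}), builds a coercive $\omega$-differentiable majorant of $c$ agreeing with $c$ on $\overline{P(C)}$ via a Whitney-cube regularization (Proposition \ref{c1wregularization}), and only then convexifies using Kirchheim--Kristensen. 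The part you do carry out is correct and illuminating: with $C_0=2$, minimizing $P_z-a_{y_0}$ over $x\in\Rn$ and using $\Phi(t)\ge\frac{1}{2}t\omega(t)$ (concavity of $\omega$) reduces the minorant inequality $a_{y_0}\le P_z$ to precisely $(CW^{1,\omega})$ with $\eta=1/2$, so the sandwich $a_{y_0}\le F\le P_{y_0}$ pins the jet at each $y_0\in C$ (and also gives $m_C(f)\le F\le\inf_y P_y$, connecting your object to the paper's). But the step you call ``the heart of the argument,'' that $F\in C^{1,\omega}(\Rn)$ with $M(\nabla F,\Rn)\lesssim M$, is only a plan. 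Kirchheim--Kristensen, as stated and used in the paper, requires the function being convexified to be \emph{differentiable} and \emph{coercive}, and $\inf_{y\in C}P_y$ is in general neither: it is a nonsmooth infimum of smooth functions, and its only global minorant available to you is the affine $a_{y_0}$. Coercivity genuinely fails: if $\omega$ is bounded (so $\Phi$ grows linearly with slope $\sup\omega$) and $|G(y_0)|>2M\sup\omega$, then already $P_{y_0}(y_0-t\,G(y_0)/|G(y_0)|)\to-\infty$, hence $\inf_y P_y\to-\infty$ in that direction. Repairing this forces you to subtract an affine function and pass to a subspace where the remainder is coercive --- which is exactly Proposition \ref{rigid global behaviour of convex functions} --- and then to smooth before convexifying, or else to prove independently and quantitatively that a convex function admitting a uniform $\omega$-cap from above at every point is $C^{1,\omega}$ with controlled modulus. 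Neither is supplied; the appeal to a ``Glaeser-type continuous selection'' is where the actual work of Section~3 lives.

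On necessity, your sketch walks from $y$ in the direction $u=(\nabla F(x)-\nabla F(y))/|\nabla F(x)-\nabla F(y)|$ and proposes to combine convexity with $|\nabla F(y+tu)-\nabla F(y)|\le M\omega(t)$. But this modulus bound only controls $F$ along that ray from \emph{above}, and convexity at $y$ only gives the trivial lower bound $F(y+tu)\ge F(y)+t\langle\nabla F(y),u\rangle$; neither produces a lower bound on $F(x)-F(y)-\langle\nabla F(y),x-y\rangle$. The paper's Proposition \ref{necessity} normalizes to $f(y)=0$, $\nabla f(y)=0$ (so $f\ge 0$), and walks \emph{from} $x$ in the downhill direction $v=-\nabla f(x)/|\nabla f(x)|$; the modulus bound gives the one-dimensional upper estimate $f(x+tv)\le f(x)-t|\nabla f(x)|+t\omega(t)$, which must remain $\ge 0$, and choosing $t=\omega^{-1}(|\nabla f(x)|/2)$ yields the inequality. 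Your base point and direction need to be swapped for the argument to close.
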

In particular, for the most important case that $\omega(t)=t$, we have the following.
\begin{corollary}\label{C11 convex extension}
Let $C$ be a (not necessarily convex) subset of $\R^n$. Let $f:C\to\R$ be an arbitrary function, and $G:C\to\R^n$ be a Lipschitz function. Then $f$ has a convex, $C^{1,1}$ extension $F$ to all of $\R^n$, with $\nabla F=G$ on $C$, if and only if $(f,G)$ satisfies $(CW^{1,1})$ on $C$.
\end{corollary}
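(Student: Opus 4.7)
My plan is to derive the corollary as the special case $\omega(t)=t$ of Theorem~\ref{C1omega convex extension}, after checking that the two formulations of the compatibility condition are equivalent up to a renaming of constants.

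First, I would observe that with $\omega(t)=t$ one has $\omega^{-1}(s)=s$, and a mapping has modulus of continuity $\omega$ (in the paper's sense) precisely when it is Lipschitz; moreover $M(G,C)$ coincides with the Lipschitz seminorm $\textrm{Lip}(G)$. Substituting $\omega^{-1}(s)=s$ into the right-hand side of $(CW^{1,\omega})$ gives
$$
f(x)-f(y)-\langle G(y),x-y\rangle\geq \frac{\eta}{2M}|G(x)-G(y)|^{2},
$$
which is exactly $(CW^{1,1})$ with $\delta=\eta/(2M)>0$. Hence $(CW^{1,\omega})\Rightarrow (CW^{1,1})$ is trivial when $\omega(t)=t$.

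For the reverse direction, suppose $(CW^{1,1})$ holds with some $\delta>0$ and $G$ is nonconstant, so $M:=\textrm{Lip}(G)\in(0,\infty)$. Replacing $\delta$ by $\delta':=\min\{\delta,\,1/(4M)\}>0$ only weakens the inequality, since its right-hand side is nonnegative; therefore $(f,G)$ also satisfies $f(x)-f(y)-\langle G(y),x-y\rangle\geq \delta'|G(x)-G(y)|^{2}$, and with $\eta:=2M\delta'\in(0,1/2]$ this reads precisely as $(CW^{1,\omega})$ for $\omega(t)=t$. The degenerate case in which $G\equiv v$ is constant is disposed of exactly as indicated after the definition of $(CW^{1,\omega})$ in the paper: for any fixed $x_{0}\in C$ the affine map $x\mapsto f(x_{0})+\langle v,x-x_{0}\rangle$ is a convex $C^{1,1}$ extension of $f$ with gradient $v$ on $C$ (and $(CW^{1,1})$ is automatic here).

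With these equivalences in place, both implications of the corollary follow at once from Theorem~\ref{C1omega convex extension} applied with $\omega(t)=t$: the `if' direction produces a convex $F\in C^{1,1}(\R^{n})$ extending $(f,G)$, and the `only if' direction gives $(CW^{1,1})$ whenever such an $F$ exists. There is no genuine analytical obstacle, since all of the real work has been carried out in Theorem~\ref{C1omega convex extension}; the only bookkeeping point to watch is that the range restriction $\eta\in(0,1/2]$ in $(CW^{1,\omega})$ is automatically achievable from $(CW^{1,1})$ after the harmless reduction of $\delta$ to $\delta'$ described above.
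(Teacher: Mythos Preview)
Your proposal is correct and follows exactly the paper's approach: the corollary is stated in the paper with the preamble ``In particular, for the most important case that $\omega(t)=t$, we have the following,'' and no separate proof is given; the paper also asserts (just before stating $(CW^{1,1})$) that $(CW^{1,\omega})$ specializes to $(CW^{1,1})$ when $\omega(t)=t$. Your write-up simply makes the constant bookkeeping $\delta\leftrightarrow\eta/(2M)$ explicit, including the harmless shrinking of $\delta$ to land $\eta$ in $(0,1/2]$, which the paper leaves implicit.
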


It is worth noting that our proofs provide good control of the modulus of continuity of the gradients of the extensions $F$, in terms of that of $G$. In fact, assuming $\eta=1/2$ in $(CW^{1, \omega})$, which we can fairly do (see Proposition \ref{necessity} below), there exists a constant $k(n)>0$, depending only on the dimension $n$, such that
\begin{equation}\label{estimation of norm C11}
M(\nabla F,\Rn):= \sup_{x,y\in\Rn, \, x\neq y}\frac{|\nabla F(x)-\nabla F(y)|}{\omega(|x-y|)}  \leq k(n) \, M(G, C).
\end{equation}
Because convex functions on $\Rn$ are not bounded (unless they are constant), the most usual definitions of norms in the space $C^{1,\omega}(\Rn)$ are not suited to estimate convex functions. In this paper, for a differentiable function $F:\Rn\to\R$ we will denote
\begin{equation} \label{definitionnorm}
\|F\|_{1, \omega}=|F(0)|+|\nabla F(0)|+\sup_{x,y\in\Rn, \, x\neq y}\frac{|\nabla F(x)-\nabla F(y)|}{\omega(|x-y|)}.
\end{equation}
With this notation, and assuming $0\in C$ and $\eta=1/2$ in $(CW^{1,\omega})$, equation \eqref{estimation of norm C11} implies that
\begin{equation}\label{good control of norms}
\|F\|_{1,\omega}\leq k(n) \left(|f(0)|+|G(0)|+M(G,C) \right).
\end{equation}
In particular, the norm of the extension $F$ of $f$ that we construct is nearly optimal, in the sense that 
\begin{equation}\label{good control of norms 2}
\|F\|_{1,\omega}\leq k(n)\,
\inf\{\|\varphi\|_{1,\omega} : \varphi\in C^{1, \omega}(\R^n), \varphi_{|_C}=f, (\nabla\varphi)_{|_C}=G\}
\end{equation}
for a constant $k(n)\geq 1$ only depending on $n$.

In practice, it can happen that one is able to show that a pair $(f,G)$ satisfies $(CW^{1, \omega})$ with $M=M(G,C)$ and with $\eta<1/2$, but not with $\eta=1/2$. That is, $\eta=1/2$, though always theoretically achievable according to Proposition \ref{necessity}, may not be easily achievable in certain examples. The proof of Theorem \ref{C1omega convex extension} that we give for $\eta=1/2$ can be appropriately modified with some obvious changes to show the following quantitative version of this result, which may turn out to be more useful in such situations. 

\begin{theorem}\label{first quantitative version of C11 convex extension} 
Let $\omega$ be a modulus of continuity. Let $C$ be a (not necessarily convex) subset of $\R^n$ . Let $f:C\to\R$ be an arbitrary function, and $G:C\to\R^n$ be continuous, with modulus of continuity $\omega$.  Assume that 
$$
f(x)-f(y)-\langle G(y), x-y\rangle \geq \eta \, |G(x)-G(y)| \, \omega^{-1}\left( \frac{1}{2M} |G(x)-G(y)| \right)
$$
for all $x, y\in C$, where $M=M(G, C)$ and $\eta\in (0, 1/2]$.  
Then $f$ has a convex, $C^{1,\omega}$ extension $F$ to all of $\R^n$, with $\nabla F=G$ on $C$, and such that 
\begin{equation}\label{estimation of norm C11 for eta less that one half}
M(\nabla F,\Rn):= \sup_{x,y\in\Rn, \, x\neq y}\frac{|\nabla F(x)-\nabla F(y)|}{\omega(|x-y|)}  \leq k(n, \eta) \, M(G, C),
\end{equation}
where $k(n, \eta)$ is a constant depending only on the dimension $n$ and the number $\eta$.
\end{theorem}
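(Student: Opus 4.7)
The plan is to reduce Theorem~\ref{first quantitative version of C11 convex extension} to Theorem~\ref{C1omega convex extension} by a simple rescaling of the constant $M$ in the hypothesis, rather than redoing the proof of Theorem~\ref{C1omega convex extension} from scratch. The key observation is that the inequality assumed here, with parameters $(\eta, M)$ where $\eta\in(0,1/2]$ and $M=M(G,C)$, implies the same inequality with $\eta$ replaced by $1/2$ and $M$ replaced by the enlarged constant $\tilde M := M/(2\eta)\geq M$.

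To verify this, I would fix $x,y\in C$, set $s:=|G(x)-G(y)|$, and note the identity $s/(2\tilde M) = (2\eta)\cdot s/(2M)$. Since $2\eta\in[0,1]$, the property $\omega^{-1}(\mu t)\leq \mu\, \omega^{-1}(t)$ for $\mu\in[0,1]$, recalled at the beginning of the paper, gives $\omega^{-1}\bigl(s/(2\tilde M)\bigr)\leq 2\eta\,\omega^{-1}\bigl(s/(2M)\bigr)$, whence
$$
\tfrac{1}{2}\, s\,\omega^{-1}\!\bigl(s/(2\tilde M)\bigr)\;\leq\;\eta\, s\,\omega^{-1}\!\bigl(s/(2M)\bigr)\;\leq\; f(x)-f(y)-\langle G(y),x-y\rangle.
$$
This is precisely the hypothesis of Theorem~\ref{C1omega convex extension} with the effective constant $\tilde M$ in place of $M(G,C)$.

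Next I would invoke Theorem~\ref{C1omega convex extension}, applied to $(f,G)$ but with $\tilde M$ playing the role of $M(G,C)$ throughout its proof. The only places where the value $M(G,C)$ enters that argument are as an $\omega$-continuity bound for $G$ and as the constant appearing in the $(CW^{1,\omega})$ inequality with $\eta=1/2$; both roles are preserved when the constant is enlarged to $\tilde M$. The construction therefore proceeds without modification and delivers a convex $F\in C^{1,\omega}(\R^n)$ with $F=f$ and $\nabla F=G$ on $C$, satisfying
$$
M(\nabla F,\R^n)\;\leq\; k(n)\,\tilde M\;=\;\frac{k(n)}{2\eta}\,M(G,C),
$$
so that $k(n,\eta):=k(n)/(2\eta)$ works.

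The main obstacle is checking that the proof of Theorem~\ref{C1omega convex extension} does not secretly require $M$ to equal the sharp infimum $M(G,C)$ rather than any valid upper bound for the $\omega$-continuity of $G$. Inspecting the natural ingredients — the convex envelope $x\mapsto\sup_{y\in C}\bigl(f(y)+\langle G(y),x-y\rangle\bigr)$, a Whitney-type extension of $G$ to $\R^n$, and subsequent smoothing — one sees that only upper bounds on $|G(x)-G(y)|/\omega(|x-y|)$ and the $(CW^{1,\omega})$ inequality are used, both of which remain valid when $M$ is replaced by $\tilde M$. Thus the reduction yields the claimed quantitative bound with no further effort, and the deterioration as $\eta\to 0^{+}$ is captured cleanly by the factor $1/(2\eta)$ in $k(n,\eta)$; this, I believe, is exactly what the authors mean by the ``obvious changes'' to the proof of Theorem~\ref{C1omega convex extension}.
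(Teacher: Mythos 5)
Your reduction is correct, and it is essentially the same idea the paper uses: the rescaling $\tilde M = M/(2\eta)$ together with $\omega^{-1}(\mu t)\leq\mu\,\omega^{-1}(t)$ is precisely the computation in the paper's Remark following Theorem~\ref{second quantitative version of C11 convex extension} (which shows $M_*(f,G,C)\leq 2M/\eta$), and your check that the sufficiency proof of Theorem~\ref{C1omega convex extension} only ever uses $M$ as an upper bound for the $\omega$-Lipschitz constant of $G$ and in the $(CW^{1,\omega})$ inequality is exactly what justifies the authors' phrase ``appropriately modified with some obvious changes,'' yielding $k(n,\eta)=k(n)/(2\eta)$.
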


Another quantitative approach to Theorem \ref{C1omega convex extension} consists in allowing $M>M(G,C)$ and getting rid of $\eta$ altogether in condition $(CW^{1, \omega})$ as follows. Assuming that $G$ is not constant, let $M_{*}(f,G,C)$ denote the smallest constant $M$ such that
$$
|G(x)-G(y)|\leq M\omega (|x-y|)
$$
and
$$
f(x)-f(y)-\langle G(y), x-y\rangle \geq |G(x)-G(y)|\omega^{-1}\left( \frac{1}{2M}|G(x)-G(y)|\right)
$$
for all $x,y\in C$. Again the same proof as that of Theorem \ref{C1omega convex extension}, with obvious changes, yields the following.
\begin{theorem}\label{second quantitative version of C11 convex extension} 
Let $\omega$ be a modulus of continuity, $C$ be a (not necessarily convex) subset of $\R^n$, and $f:C\to\R$ and $G:C\to\R^n$ be given mappings. 
Then $f$ has a convex, $C^{1,\omega}$ extension $F$ to all of $\R^n$, with $\nabla F=G$ on $C$, if and only if $M_{*}(f,G,C)<\infty$. In this case, $F$ can be taken so that 
\begin{equation}\label{estimation of norm C11 for Mstar}
M(\nabla F,\Rn):= \sup_{x,y\in\Rn, \, x\neq y}\frac{|\nabla F(x)-\nabla F(y)|}{\omega(|x-y|)}  \leq k(n) M_{*}(f, G, C),
\end{equation}
and, further assuming that $0\in C$, also
\begin{equation}\label{good control of norms for Mstar}
\|F\|_{1,\omega}\leq k(n) \left(|f(0)|+|G(0)|+M_{*}(f,G,C) \right),
\end{equation}
where $k(n)$ is a constant depending only on the dimension $n$.
\end{theorem}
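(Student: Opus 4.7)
The plan is to derive Theorem \ref{second quantitative version of C11 convex extension} from the construction that proves Theorem \ref{C1omega convex extension} via parameter substitution, rather than by formal logical inference from the earlier theorems. For the \emph{necessity} direction, I would suppose $F \in C^{1, \omega}(\Rn)$ is convex with $F|_C = f$ and $\nabla F|_C = G$. By the necessity half of Theorem \ref{C1omega convex extension} (that is, Proposition \ref{necessity}), the pair $(f, G)$ satisfies $(CW^{1, \omega})$ on $C$ with $\eta = 1/2$ and $M = M(G, C)$, and trivially $|G(x) - G(y)| \leq M \omega(|x - y|)$ on $C$. To convert this into the bound defining $M_*$ (which has leading coefficient $1$ rather than $\tfrac{1}{2}$), I would set $M' := 2M$ and use the scaling $\omega^{-1}(s/2) \leq \tfrac{1}{2}\omega^{-1}(s)$, a direct consequence of $\omega^{-1}(\mu s) \leq \mu \omega^{-1}(s)$ for $\mu \in [0, 1]$ noted in the introduction. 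This yields
\[
\tfrac{1}{2}|G(x) - G(y)| \, \omega^{-1}\!\left( \tfrac{1}{2M} |G(x) - G(y)| \right) \geq |G(x) - G(y)| \, \omega^{-1}\!\left( \tfrac{1}{2M'} |G(x) - G(y)| \right),
\]
and since $|G(x) - G(y)| \leq M' \omega(|x - y|)$ as well, I conclude $M_*(f, G, C) \leq M' = 2 M(G, C) < \infty$.

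For the \emph{sufficiency} direction, I would take the proof of Theorem \ref{C1omega convex extension} and substitute $M := M_*(f, G, C)$ for $M(G, C)$ and $\eta := 1$ for $\eta = 1/2$ throughout. By the definition of $M_*$, both inputs actually used by that construction --- the modulus bound $|G(x) - G(y)| \leq M \omega(|x - y|)$ on $C$, and the convexity-type lower bound of $(CW^{1, \omega})$-form with constants $\eta = 1$ and this same $M$ --- are simultaneously available with the single constant $M$. Every quantitative step in the original proof then carries through unchanged, producing a convex $C^{1, \omega}$ extension $F$ with $F = f$ and $\nabla F = G$ on $C$, and $M(\nabla F, \Rn) \leq k(n) M$, which is exactly \eqref{estimation of norm C11 for Mstar}. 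When $0 \in C$, combining \eqref{definitionnorm}, the identities $F(0) = f(0)$ and $\nabla F(0) = G(0)$, and \eqref{estimation of norm C11 for Mstar} gives \eqref{good control of norms for Mstar}.

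The step I expect to require the most care is verifying that the original proof of Theorem \ref{C1omega convex extension} depends on $M$ only through the two properties listed above, with no implicit identification of $M$ with $M(G, C)$. Because $\omega^{-1}$ is nonlinear, enlarging $M$ weakens some intermediate estimates, so one must confirm line by line that each step of the construction can absorb such a weakening --- which is precisely why the authors speak only of ``obvious changes'' and do not reproduce the proof.
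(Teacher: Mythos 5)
Your proof is correct and is essentially what the paper has in mind when it says "the same proof as that of Theorem \ref{C1omega convex extension}, with obvious changes." You have correctly identified the two places where the constant $M$ enters the sufficiency argument (the modulus bound $|G(x)-G(y)|\leq M\omega(|x-y|)$, used e.g.\ to bound $\widetilde{M}$ and to invoke Whitney-Glaeser, and the $(CW^{1,\omega})$-type lower bound, used in Lemmas \ref{existenceminimal}, \ref{inequalitiapproximationminimal}, \ref{csatifiescw11} and Proposition \ref{differentiabilityminimal}), and you have observed that both are supplied with the single constant $M_*(f,G,C)$ and coefficient $1$ — which, being $\geq 1/2$, feeds into every step of the original argument with at most an improvement of constants. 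One small remark on the necessity direction: what Proposition \ref{necessity} literally proves is the inequality with $M=M(\nabla F,\R^n)$, which is $\geq M(G,C)$ and therefore gives a \emph{weaker} lower bound than the one with $M(G,C)$; the ``in particular'' sentence of that proposition glosses over this. It does not matter for you, since you only need $M_*(f,G,C)<\infty$: the same rescaling $\omega^{-1}(s/2)\leq\tfrac12\omega^{-1}(s)$ applied with $M=M(\nabla F,\R^n)$ gives $M_*(f,G,C)\leq 2M(\nabla F,\R^n)<\infty$, and this is the safer way to phrase the necessity step.
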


In view of the following Remark, Theorem \ref{second quantitative version of C11 convex extension} is, at least formally, an improvement of Theorem \ref{first quantitative version of C11 convex extension}.
\begin{remark}
If a pair $(f,G)$ satisfies inequality $(CW^{1,\omega})$ on $C$ for some $\eta\in (0,1/2]$ and $M\geq M(G,C)$, then $M_{*}(f,G,C)\leq 2M/\eta$, and in particular $M_{*}(f,G,C)<\infty$.
\end{remark}
\begin{proof}
Using the fact that $\omega^{-1}(\mu s)\leq \mu\omega^{-1}(s)$ for $\mu\in [0,1]$, we have
\begin{eqnarray*}
& f(x)-f(y)-\langle G(y), x-y\rangle & \geq \eta |G(x)-G(y)| \omega^{-1}\left(\frac{1}{2M}|G(x)-G(y)|\right)\\
& & \geq  |G(x)-G(y)| \omega^{-1}\left(\frac{\eta}{2M}|G(x)-G(y)|\right),
\end{eqnarray*}
and by combining with Remark \ref{CW11 implies W11}(2) we obtain $M_{*}(f,G,C)\leq 2M/\eta$.
\end{proof}

Let us now consider a similar extension problem for the class of $C^1$ convex functions: given a continuous mapping $G:C\to\R^n$ and a function $f:C\to\R$, how can we decide whether there is a convex function $F\in C^1(\R^n)$ such that $F_{|_C}=f$ and $(\nabla F)_{|_C}=G$? There is evidence suggesting that, if $C$ is not assumed to be compact or $G$ is not uniformly continuous, this problem does not have a solution which is simple enough to use; see \cite[Example 4]{SchulzSchwartz}, \cite[Example 3.2]{VeselyZajicek}, and \cite[Example 4.1]{AM}. These examples show in particular that there exists a closed convex set $V\subset\R^2$ with nonempty interior and a $C^{\infty}$ function $f:\R^2\to\R$ so that $f$ is convex on an open convex neighborhood of $V$ and yet there is no convex function $F:\R^2\to\R$ such that $F=f$ on $V$.
Such $V$ and $f$ may be defined for instance by
$$V=\{(x,y)\in\R^2 \, : \,  x>0, xy\geq 1\},$$ and $$f(x,y)=-2\sqrt{xy} +\frac{1}{x+1} +\frac{1}{y+1}$$
for every $(x,y)\in V$. It is not difficult to see that the function $f$ admits a $C^{\infty}$ extension to $\R^2$, but there is no convex extension of $f$ defined on $\R^2$.
Nevertheless, we will show that there cannot be any such examples with $V$ compact (see Theorem \ref{main theorem C1} below). 

Since for a function $\varphi\in C^{1}(\R^n)$ and a compact set $C\subset\R^n$ there always exists a modulus of continuity for the restriction $(\nabla\varphi)_{|_C}$, Theorem \ref{C1omega convex extension} also provides a solution to our $C^1$ convex extension problem when $C$ is compact. However, given such a $1$-jet $(f, G)$ on a compact set $C$, unless $\omega(t)=t$ or one has a clue about what $\omega$ might do the job, in practice it may be difficult to find a modulus of continuity $\omega$ such that $(f,G)$ satisfies $(CW^{1, \omega})$, and for this reason it is also desirable to have a criterion for $C^1$ convex extendibility which does not involve dealing with moduli of continuity. We next study this question.

Given a $1$-jet $(f,G)$ on $C$ (where $f:C\to\R$ is a function and $G:C\to\R^n$ is a continuous mapping), a necessary condition for the existence of a convex function $F\in C^1(\R^n)$ with $F_{|_C}=f$ and $(\nabla F)_{|_C}=G$ is given by
$$
\lim_{|z-y|\to 0^{+}}\frac{f(z)-f(y)-\langle G(y), z-y\rangle}{|z-y|}=0 \textrm{ uniformly on } C, \eqno (W^1)
$$
which is equivalent to Whitney's classical condition for $C^1$ extendibility. If a $1$-jet $(f,G)$ satisfies condition $(W^1)$, Whitney's extension theorem \cite{Whitney} provides us with a function $F\in C^{1}(\R^n)$ such that $F_{|_C}=f$ and $(\nabla F)_{|_C}=G$. In the special case that $C$ is a convex body, if $f:C\to\R$ is convex and $(f,G)$ satisfies $(W^1)$, we will see that, without any further assumptions on $(f, G)$, $f$ {\em always} has a convex $C^1$ extension to all of $\R^n$ with $(\nabla F)_{|_C}=G$.

\begin{theorem}\label{main theorem C1}
Let $C$ be a compact convex subset of $\R^n$ with non-empty interior. Let $f:C\to\R$ be a convex function, and $G:C\to\R^n$ be a continuous mapping satisfying Whitney's extension condition $(W^1)$ on $C$. Then there exists a convex function $F\in C^{1}(\R^n)$ such that $F_{|_C}=f$ and $(\nabla F)_{|_C}=G$.
\end{theorem}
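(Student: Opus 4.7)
The plan is to reduce the theorem to Theorem \ref{C1omega convex extension} by producing a modulus of continuity $\omega$ for which $(f,G)$ satisfies $(CW^{1,\omega})$ on $C$.

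First I would record three preliminary facts. (i) Since $C$ is compact and $G$ is continuous, $G$ is uniformly continuous on $C$ with some concave modulus of continuity $\omega_G$. (ii) By the Whitney condition $(W^1)$ on the compact set $C$, there exists a concave function $\theta\colon [0,\infty)\to[0,\infty)$ with $\theta(0^{+})=0$ such that
$$|f(x)-f(y)-\langle G(y),x-y\rangle|\le \theta(|x-y|)|x-y|\quad\textrm{for all } x,y\in C;$$
replacing $\theta$ by the concave majorant of $\theta$ and $\omega_G$ we may assume $\theta\ge\omega_G$. (iii) The convexity of $C$ and $f$, together with $(W^1)$, forces $G(y)$ to be a subgradient of $f$ at every $y\in C$: on the segment $[y,x]\subset C$ the difference quotient $\tfrac{1}{t}[f(y+t(x-y))-f(y)]$ is bounded above by $f(x)-f(y)$ (convexity), while $(W^1)$ shows its limit as $t\to 0^{+}$ equals $\langle G(y),x-y\rangle$. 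Consequently, $f(x)-f(y)-\langle G(y),x-y\rangle\ge 0$ on $C\times C$.

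The core step is to strengthen this nonnegativity to the quantitative lower bound required by $(CW^{1,\theta})$. Fix $x,y\in C$ with $r:=|G(x)-G(y)|>0$, and consider the convex function $\phi_y(z):=f(z)-\langle G(y),z\rangle$ on $C$, whose minimum on $C$ is attained at $y$ by (iii). The Whitney bound (ii) translates into the descent-style inequality
$$\phi_y(z)\le \phi_y(x)+\langle G(x)-G(y),z-x\rangle+\theta(|z-x|)|z-x|\quad\textrm{for all } x,z\in C.$$
If the ideal step $z_1:=x-\tau(G(x)-G(y))$ with $\tau:=\theta^{-1}(r/2)/r$ happens to lie in $C$, then combining the above with $\phi_y(z_1)\ge \phi_y(y)$ and using $\theta(\tau r)=r/2$ yields
$$f(x)-f(y)-\langle G(y),x-y\rangle=\phi_y(x)-\phi_y(y)\ge \tfrac{r}{2}\,\theta^{-1}\!\bigl(r/2\bigr),$$
which is precisely $(CW^{1,\theta})$ with $\eta=1/2$ and $M=1$ (note $|G(x)-G(y)|\le\omega_G(|x-y|)\le\theta(|x-y|)$, so $M(G,C)\le 1$ when $\omega=\theta$). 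Since $r\le\theta(|x-y|)$, concavity of $\theta$ gives $\tau r=\theta^{-1}(r/2)\le|x-y|$, so $z_1$ sits within distance $|x-y|$ of $x$. Nonetheless, $z_1$ may leave $C$ when $x\in\partial C$ and $-(G(x)-G(y))$ points outward. This is where the non-empty interior hypothesis is crucial: fixing $z_0\in\interior(C)$ with $B(z_0,\rho)\subset C$, one replaces $z_1$ by a convex combination of $x-\tau'(G(x)-G(y))$ and $z_0$ (or perturbs the descent direction toward $z_0$) chosen to remain inside $C$, at the cost of a constant factor depending on $\rho$ and $\diam(C)$. This yields $(CW^{1,\omega})$ for a concave modulus $\omega$ built from $\theta$ and these geometric constants.

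With $(CW^{1,\omega})$ established, Theorem \ref{C1omega convex extension} produces the desired convex function $F\in C^{1,\omega}(\R^n)\subset C^1(\R^n)$ with $F_{|_C}=f$ and $(\nabla F)_{|_C}=G$, completing the proof. The main obstacle is precisely the geometric modification of the descent step to keep it in $C$: without both compactness and non-empty interior, the scheme can fail, consistent with the non-compact counterexamples recorded in the introduction.
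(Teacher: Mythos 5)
Your reduction of Theorem \ref{main theorem C1} to Theorem \ref{C1omega convex extension} is a genuinely different route from the paper's. The paper derives the \emph{qualitative} condition $(CW^1)$ from $(W^1)$ and convexity via the Lemma at the beginning of Section 2 (whose proof exploits the non-empty interior of $C$ to propagate the affineness of $f$ along $[x_0,y_0]$ into transverse directions), notes that $(C)$ is automatic for a convex $f$ on a convex body, and then invokes Theorem \ref{main theorem C1 for nonconvex compacta}, which is proved via $m(f)$ and a convex-envelope construction that never touches moduli of continuity. You instead try to manufacture an explicit modulus $\omega$ with $(CW^{1,\omega})$, a stronger quantitative statement; the paper's introduction explicitly flags this as difficult in practice, and your sketch does not close the gap.

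The gap is precisely in keeping the descent step inside $C$. Gradient monotonicity on $C$ only gives $\langle G(x)-G(y),x-y\rangle\ge 0$, which allows $-(G(x)-G(y))$ to be exactly tangent to $\partial C$ at a boundary point $x$, so that the entire ray $\{x-t(G(x)-G(y)):t>0\}$ leaves $C$. A concrete instance: $C=B(0,1)\subset\R^2$, $f(z)=\tfrac{r}{2}z_2^{2}$, $x=(1,0)$, $y=(0,1)$; then $G(x)-G(y)=(0,-r)$ and $\{(1,t):t>0\}$ exits $C$ immediately, while $y$ is indeed the minimizer of $\phi_y$ on $C$. Your proposed remedy, taking $z_1'=(1-\lambda)x+\lambda z_0-\tau'(G(x)-G(y))$ with $\tau' r\le\lambda\rho$ so that $z_1'\in C$, injects the extra term $\lambda\langle G(x)-G(y),z_0-x\rangle$ into the Whitney bound. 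This term can be positive, is controlled only by $\lambda\,\theta(\diam C)\,\diam C$, and the constraint $\lambda\ge\tau' r/\rho$ forces it to be at least $\tau' r\,\theta(\diam C)\,\diam C/\rho$, which swamps the desired gain of order $\tau' r^2$ whenever $r\lesssim\theta(\diam C)\,\diam C/\rho$ — and $r$ can certainly be that small. Hence no lower bound of the form $c\,r\,\omega^{-1}(c'r)$ emerges from the perturbed step, and the modulus $\omega$ you claim to build from $\theta$ and the geometric constants is not actually produced. Closing this gap would essentially require a quantitative strengthening of the paper's Lemma in Section 2 — a nontrivial project which the paper deliberately sidesteps by going through $(C)$, $(CW^1)$ and Theorem \ref{main theorem C1 for nonconvex compacta}.
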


If $C$ and $f$ are convex but $\textrm{int}(C)$ is empty then, in order to obtain differentiable convex extensions of $f$ to all of $\R^n$ we will show that it is enough to complement $(W^1)$ with the following global geometrical condition:
$$
f(x)-f(y)=\langle G(y), x-y\rangle \implies G(x)= G(y), \textrm{ for all } x,y\in C. \eqno(CW^1)
$$

\begin{theorem}\label{main theorem C1 for C with empty interior}
Let $C$ be a compact convex subset of $\R^n$. Let $f:C\to\R$ be a convex function, and $G:C\to\R^n$ be a continuous mapping. Then $f$ has a convex, $C^1$ extension $F$ to all of $\R^n$, with $\nabla F=G$ on $C$, if and only if $f$ and $G$ satisfy $(W^1)$ and $(CW^1)$ on $C$.
\end{theorem}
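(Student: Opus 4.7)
The plan is to derive Theorem \ref{main theorem C1 for C with empty interior} from Theorems \ref{C1omega convex extension} and \ref{main theorem C1}.

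Necessity is straightforward. $(W^1)$ holds automatically for any $C^1$ extension. For $(CW^1)$, if $f(x)-f(y)=\langle G(y),x-y\rangle$ for some $x,y\in C$ and $F$ is any convex $C^1$ extension, then $F$ coincides with its tangent hyperplane at $y$ at the point $x$; by convexity $F$ must be affine on the entire segment $[y,x]$, and the standard fact that a $C^1$ convex function which is affine on a segment has constant gradient there yields $G(x)=G(y)$.

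For sufficiency, if $\text{int}(C)\neq\emptyset$ then Theorem \ref{main theorem C1} applies directly (in that case $(CW^1)$ is automatic: by $(W^1)$ and convexity of $f$, $G$ is forced to coincide with $\nabla f$ on $\text{int}(C)$, and then $(CW^1)$ follows from standard convex analysis plus continuity on $\partial C$). The substantive case is $\text{int}(C)=\emptyset$, where my strategy is to construct a concave modulus of continuity $\omega$ such that $(f,G)$ satisfies $(CW^{1,\omega})$ on $C$, and then invoke Theorem \ref{C1omega convex extension}. I would proceed in three steps. Step 1: let $V=\text{aff}(C)$; identifying $V$ with $\R^{\dim V}$, $C$ is compact convex with nonempty interior in $V$, so Theorem \ref{main theorem C1} applies inside $V$ to $(f,G_T)$, where $G_T$ denotes the tangential component of $G$. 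The resulting convex $C^1$ extension $F_0$ of $f$ to $V$ satisfies the supporting hyperplane inequality, which for $x,y\in C$ reads $D(x,y):=f(x)-f(y)-\langle G(y),x-y\rangle\geq 0$, using that $x-y$ lies in the direction of $V$ so that $\langle G(y),x-y\rangle=\langle G_T(y),x-y\rangle$. Step 2: using compactness of $C$, continuity of $D$ and $G$, and $(CW^1)$, extract a concave modulus $\omega_D$ with $|G(x)-G(y)|\leq\omega_D(D(x,y))$ on $C\times C$, equivalently $D(x,y)\geq\omega_D^{-1}(|G(x)-G(y)|)$. The argument is a standard contradiction: any failure would produce sequences $(x_n,y_n)$ with $D(x_n,y_n)\to 0$ and $|G(x_n)-G(y_n)|\geq\varepsilon$, whose subsequential limits $(x_*,y_*)$ in $C\times C$ would satisfy $D(x_*,y_*)=0$ and $|G(x_*)-G(y_*)|\geq\varepsilon$, contradicting $(CW^1)$. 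Step 3: letting $\omega_G$ be a concave modulus of continuity for $G$ on $C$, combine $\omega_G$ and $\omega_D$ to produce a concave modulus $\omega$ together with constants $M\geq M(G,C)$ and $\eta\in(0,1/2]$ such that both $|G(x)-G(y)|\leq M\omega(|x-y|)$ and $(CW^{1,\omega})$ hold on $C$. Theorem \ref{C1omega convex extension} then provides the desired convex $C^{1,\omega}$, hence $C^1$, extension of $f$ with $\nabla F=G$ on $C$.

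The main obstacle is Step 3: producing $\omega$. There is tension between making $\omega$ large enough to dominate $\omega_G$ (so that $M=M(G,C)$ is finite for this $\omega$) and keeping $\omega^{-1}$ small enough near $0$ that $\eta|G(x)-G(y)|\omega^{-1}(|G(x)-G(y)|/(2M))$ is dominated by $D$, which is controlled only via the abstract lower bound $D\geq\omega_D^{-1}(|G(x)-G(y)|)$ coming from compactness. A natural construction is to define $\omega^{-1}(s)$ as the minimum of $\omega_G^{-1}(s)$ and $\omega_D^{-1}(2Ms)/(\eta\cdot 2Ms)$ and then replace $\omega$ by its concave envelope; verifying that this yields a genuine modulus satisfying both requirements is a technical but essentially one-dimensional real-analysis exercise, to be carried out with care about the interplay of the constants $M$ and $\eta$.
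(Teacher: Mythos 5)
Your proof outline is correct but takes a genuinely different route from the paper. The paper observes that, when $C$ is convex and $f$ is convex on $C$, condition $(W^1)$ already forces condition $(C)$ (the one-sided directional derivative of the convex restriction $t\mapsto f(y+t(x-y))$ at $t=0$ is $\langle G(y),x-y\rangle$, so $f(x)\ge f(y)+\langle G(y),x-y\rangle$), and therefore Theorem \ref{main theorem C1 for C with empty interior} is an immediate corollary of Theorem \ref{main theorem C1 for nonconvex compacta}; the real work in the paper is the direct proof of Theorem \ref{main theorem C1 for nonconvex compacta}, which never introduces a modulus of continuity: one applies Whitney's $C^1$ extension theorem, shows via $(CW^1)$ and compactness that $m(f)$ is differentiable on $C$, builds a coercive differentiable majorant $g$ of $m(f)$ agreeing with $f$ on $C$, and takes the convex envelope via Kirchheim--Kristensen. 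You instead manufacture a concave modulus $\omega$ from $(CW^1)$ by a compactness argument and then invoke the $C^{1,\omega}$ extension theorem. This is precisely the route the authors mention in passing and describe as impractical; it can in fact be made to work, but the hard part is exactly the one you flag in Step 3. Two remarks there: first, the circular dependence on $M=M(G,C)$ is best broken by normalizing so that $\omega\ge\omega_G$ (hence $M(G,C)\le1$) and appealing to Theorem \ref{second quantitative version of C11 convex extension} or the Remark following it, which allow $M\ge M(G,C)$, rather than to the strict form of $(CW^{1,\omega})$; second, the "tension'' you describe is not really a tension, since both requirements (domination of $\omega_G$ and the inequality against $\omega_D^{-1}$) push $\omega^{-1}$ downwards, and the genuine issue is whether the greatest convex, strictly increasing minorant of $\Psi(u)=\min\{\omega_G^{-1}(u),\ \omega_D^{-1}(2Mu)/(2\eta M u)\}$ vanishing at $0$ is nontrivial on the relevant bounded range. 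It is: since $\Psi$ is continuous and strictly positive on $(0,b]$, the convex envelope cannot vanish on any interval $(0,u_0]$ with $u_0>0$ (any representation of $0$ as a limit of convex combinations of values of $\Psi$ forces both basepoints to tend to $0$, making the convex combination itself tend to $0\neq u$), so a genuine modulus $\omega$ does exist. In short, your argument works but requires more delicate bookkeeping; the paper's reduction to Theorem \ref{main theorem C1 for nonconvex compacta} avoids moduli entirely and is cleaner, and also yields the quantitative Lipschitz estimate $\mathrm{Lip}(F)\lesssim\|G\|_\infty$, which does not fall out of your route.
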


In the general case of a non-convex compact set $C$, we will just have to add another global geometrical condition to $(CW^{1})$:
$$
f(x)-f(y)\geq \langle G(y), x-y\rangle \, \textrm{ for all } x,y\in C. \eqno(C)
$$
\begin{remark}\label{C implies W1}
If $(f,G)$ satisfies condition $(C)$ and $G$ is continuous, then $(f,G)$ satisfies Whitney's condition $(W^1)$.
\end{remark}
\noindent This is easily shown by an obvious modification of the proof of Remark \ref{CW11 implies W11}.

\begin{theorem}\label{main theorem C1 for nonconvex compacta}
Let $C$ be a compact (not necessarily convex) subset of $\R^n$. Let $f:C\to\R$ be an arbitrary function, and $G:C\to\R^n$ be a continuous mapping. Then $f$ has a convex, $C^1$ extension $F$ to all of $\R^n$, with $\nabla F=G$ on $C$, if and only if $(f,G)$ satisfies conditions $(C)$ and $(CW^1)$ on $C$.
\end{theorem}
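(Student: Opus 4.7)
\emph{Necessity.} Suppose $F\in C^{1}(\R^{n})$ is convex with $F=f$ and $\nabla F=G$ on $C$. Condition $(C)$ is the subgradient inequality $F(x)\geq F(y)+\langle\nabla F(y),x-y\rangle$ restricted to $x,y\in C$. For $(CW^{1})$: if $x,y\in C$ satisfy $f(x)-f(y)=\langle G(y),x-y\rangle$, the affine function $A(z):=F(y)+\langle G(y),z-y\rangle$ lies below $F$ by convexity and satisfies $A(x)=F(x)$; hence $A$ is an affine support for $F$ at $x$, which forces $G(y)=\nabla F(y)\in\partial F(x)=\{\nabla F(x)\}=\{G(x)\}$.

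\emph{Sufficiency.} The plan is to reduce to Theorem~\ref{main theorem C1 for C with empty interior} by extending $(f,G)$ from $C$ to the convex hull $K:=\overline{\mathrm{conv}}(C)$. To this end I would introduce the convex envelope
\[
m(x):=\sup_{y\in C}\bigl\{f(y)+\langle G(y),x-y\rangle\bigr\}.
\]
By Remark~\ref{C implies W1}, $(C)$ together with the continuity of $G$ yields $(W^{1})$ on $C$ and, in particular, the continuity of $f$ on $C$; consequently $m$ is convex, continuous on $\R^{n}$, and satisfies $m=f$ on $C$. The first nontrivial claim is that $m$ is differentiable at every $x_{0}\in C$ with $\nabla m(x_{0})=G(x_{0})$: the lower bound $m'(x_{0};u)\geq\langle G(x_{0}),u\rangle$ is immediate from the affine summand $f(x_{0})+\langle G(x_{0}),\cdot-x_{0}\rangle$; for the reverse, compactness of $C$ furnishes, for any $t_{k}\to 0^{+}$, a subsequential limit $y^{\ast}\in C$ of maximizers in $m(x_{0}+t_{k}u)$, and $(CW^{1})$ forces $G(y^{\ast})=G(x_{0})$, from which $m'(x_{0};u)\leq\langle G(x_{0}),u\rangle$ follows.

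\emph{Main obstacle.} To invoke Theorem~\ref{main theorem C1 for C with empty interior} one needs a pair $(\tilde f,\tilde G)$ on $K$ with $\tilde f$ convex and continuous, $\tilde G$ continuous, extending $(f,G)$ from $C$, and satisfying both $(W^{1})$ and $(CW^{1})$ on $K$. The naive choice $\tilde f=m|_{K}$ is insufficient because $m|_{K}$ need not be $C^{1}$-smooth inside $K$ (e.g.\ for $C=\{-1,1\}\subset\R$ with $f\equiv 1$ and $G(\pm 1)=\pm 1$ a direct computation gives $m(x)=|x|$). The technical heart of the argument is therefore to construct a convex smoothing of $m$ on $K\setminus C$ that preserves the $1$-jet $(f,G)$ on $C$, together with a continuous extension of $G$ to a gradient field on $K$ that Whitney-matches the smoothed $\tilde f$. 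Once such $(\tilde f,\tilde G)$ is built, Theorem~\ref{main theorem C1 for C with empty interior} supplies the desired convex $C^{1}$ extension $F$ to all of $\R^{n}$ and concludes the proof.
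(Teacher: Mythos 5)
Your necessity argument and the observation that $m(x)=\sup_{y\in C}\{f(y)+\langle G(y),x-y\rangle\}$ is convex, agrees with $f$ on $C$, and is differentiable at each $x_{0}\in C$ with $\nabla m(x_{0})=G(x_{0})$ are correct and match the paper (this last fact is exactly Lemma~\ref{CW1 implies differentiability of m(f) on C}). The problem lies in the sufficiency plan, and it is a genuine gap rather than an omitted routine detail.

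First, the reduction is circular. The paper derives Theorem~\ref{main theorem C1 for C with empty interior} \emph{from} Theorem~\ref{main theorem C1 for nonconvex compacta}: the text states explicitly that Theorem~\ref{main theorem C1 for C with empty interior} is an immediate consequence of Theorem~\ref{main theorem C1 for nonconvex compacta}, and it is the latter that is proved from scratch. Invoking Theorem~\ref{main theorem C1 for C with empty interior} as an available tool therefore presupposes the result you are trying to prove.

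Second, and more fundamentally, even if Theorem~\ref{main theorem C1 for C with empty interior} were granted independently, the step you label the ``technical heart'' --- produce a \emph{convex} $C^{1}$-smoothing $\tilde f$ of $m$ on $K\setminus C$ that preserves the $1$-jet $(f,G)$ on $C$, with a matching continuous gradient field $\tilde G$ satisfying $(W^{1})$ and $(CW^{1})$ on $K$ --- is not a reduction at all: it is the full difficulty of the problem restated on $K$. Smoothing a convex function while keeping it convex and pinning the jet on $C$ is precisely the kind of construction that the paper is careful to avoid, because convexity is a global constraint incompatible with local patching. The key idea you are missing is that one should \emph{not} try to build a convex intermediate object. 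The paper instead constructs a differentiable but \emph{non-convex} majorant $g$ of $m(f)$: it Whitney-extends the jet $(f,G)$ to $\widetilde f\in C^{1}(\R^{n})$, forms $H_{\varepsilon}=|\widetilde f-m(f)|+2\Phi_{\varepsilon}$ with $\Phi_{\varepsilon}$ a truncated squared distance to $C$, smooths $H_{\varepsilon}$ off $C$ while keeping it between $H_{\varepsilon}-\Phi_{\varepsilon}$ and $H_{\varepsilon}+\Phi_{\varepsilon}$, and sets $g=\widetilde f+\widetilde\varphi$; the resulting $g$ is $C^{1}$ (indeed differentiable everywhere with $g=f$, $\nabla g=G$ on $C$), satisfies $g\geq m(f)$, and is coercive. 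Only then does convexity re-enter, via the Kirchheim--Kristensen theorem: $F=\operatorname{conv}(g)$ is a $C^{1}$ convex function, is squeezed between $m(f)$ and $g$, hence equals $f$ on $C$, and by Lemma~\ref{differentiability criterion for convex functions} has $\nabla F=G$ on $C$. That detour through a non-convex majorant and the convex envelope is the ingredient your proposal lacks and cannot be replaced by an appeal to the (downstream) convex-domain theorem.
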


Similarly to the $C^{1, \omega}$ case, we will see that the proof of the above result provides good control of the Lipschitz constant of the extension $F$ in terms of $\|G\|_{\infty}$. Namely, we will see that
\begin{equation}\label{estimation of Lip F in terms of bound for G}
\sup_{x\in\R^n}|\nabla F(x)|\leq k(n)\, \sup_{y\in C}|G(y)|
\end{equation}
for a constant $k(n) \geq 1$ only depending on $n$. Interestingly, this kind of control of $\textrm{Lip}(F)$ in terms of $\|G\|_{\infty}$ cannot be obtained, in general, for jets $(f,G)$ not satisfying $(C)$, as is easily seen by examples, and the proof of Whitney's extension theorem only permits to obtain extensions $(F, \nabla F)$ (of jets $(f,G)$ on $C$) which satisfy estimations of the type
$$
\sup_{x\in\R^n}|\nabla F(x)|\leq k(n)\, \left( \sup_{z, y\in C, z\neq y}\frac{|f(z)-f(y)|}{|z-y|}  + \sup_{y\in C}|G(y)| \right).
$$
or of the type
$$
\sup_{x\in\R^n}|\nabla F(x)|\leq k(n)\, \left( \sup_{y\in C}|f(y)|  + \sup_{y\in C}|G(y)| \right).
$$
It is condition $(C)$ that allows us to get finer control of $\textrm{Lip}(F)$ in the convex case; see the proof of Claim \ref{control of lip constant of Whitney extension for convex} below. In particular, assuming $0\in C$ and defining
\begin{equation}\label{definition of norm 1}
\|F\|_{1}:=|F(0)|+\sup_{x\in\R^n}|\nabla F(x)|,
\end{equation}
we obtain an extension $F$ of $f$ such that
\begin{equation}\label{good control of norms 3}
\|F\|_{1}\leq k(n) \,
\inf\{\|\varphi\|_{1} : \varphi\in C^{1}(\R^n), \varphi_{|_C}=f, (\nabla \varphi)_{|_C}=G\},
\end{equation}
so the norm of our extension is nearly optimal in this case too.

In the particular case when $C$ is finite, Theorem \ref{main theorem C1 for nonconvex compacta} provides necessary and sufficient conditions for interpolation of finite sets of data by $C^1$ convex functions.

\begin{corollary}\label{interpolation of data by C1 convex functions}
Let $S$ be a finite subset of $\R^n$, and $f:S\to\R$ be a function. Then there exists a convex function $F\in C^{1}(\R^n)$ with $F=f$ on $S$ if and only if there exists a mapping
$G:S\to\R^n$ such that $f$ and $G$ satisfy conditions $(C)$ and $(CW^1)$ on $S$.
\end{corollary}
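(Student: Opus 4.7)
The plan is to derive this corollary directly from Theorem \ref{main theorem C1 for nonconvex compacta} applied to the compact set $S$. The sufficiency direction is immediate: given a mapping $G:S\to\R^n$ such that $(f,G)$ satisfies $(C)$ and $(CW^1)$ on $S$, the hypotheses of Theorem \ref{main theorem C1 for nonconvex compacta} hold automatically, and the continuity of $G$ required there is vacuous because $S$ is a finite (hence discrete) set. Thus Theorem \ref{main theorem C1 for nonconvex compacta} supplies a convex $F\in C^1(\R^n)$ with $F=f$ and $\nabla F=G$ on $S$.

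For the necessity, suppose $F\in C^1(\R^n)$ is convex with $F=f$ on $S$, and define $G:S\to\R^n$ by $G(x)=\nabla F(x)$. Condition $(C)$ follows from the well-known fact that a convex, differentiable function lies above each of its tangent hyperplanes: for every $x,y\in\R^n$ one has $F(x)-F(y)\geq \langle\nabla F(y), x-y\rangle$, which evaluated at points of $S$ yields exactly $(C)$ for the pair $(f,G)$.

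The only nontrivial point is to verify $(CW^1)$. Assume $x,y\in S$ satisfy $f(x)-f(y)=\langle G(y),x-y\rangle$, i.e.\ $F(x)-F(y)=\langle \nabla F(y),x-y\rangle$. By convexity, the affine map $h(z)=F(y)+\langle\nabla F(y),z-y\rangle$ satisfies $h\leq F$ globally, and the hypothesis says equality holds at both $y$ and $x$. A standard convexity argument then forces $F=h$ on the whole segment $[y,x]$: indeed, for $z=(1-t)y+tx$ with $t\in[0,1]$, convexity gives $F(z)\leq (1-t)F(y)+tF(x)=h(z)$, while the support inequality gives $F(z)\geq h(z)$. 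Since $F$ coincides with the affine function $h$ on the segment and is differentiable, $\nabla F$ must equal $\nabla h=\nabla F(y)$ at every interior point of the segment; by continuity of $\nabla F$ this persists at the endpoint $x$, so $G(x)=\nabla F(x)=\nabla F(y)=G(y)$, which is $(CW^1)$.

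The only step requiring any care is the derivation of $\nabla F(x)=\nabla F(y)$ from affinity of $F$ on $[y,x]$; this is where we use that $F$ is $C^1$ (rather than merely convex and differentiable) — along the segment the directional derivative in the direction $x-y$ is constant and equal to $|x-y|^{-1}\langle\nabla F(y), x-y\rangle$, and differentiability plus convexity pin down the full gradient at interior points, which then extends to $x$ by continuity. No estimate on the modulus of continuity of $G$ is needed since the finiteness of $S$ renders such conditions automatic, so no finiteness principle beyond Theorem \ref{main theorem C1 for nonconvex compacta} is required.
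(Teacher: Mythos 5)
Your proof is correct and takes essentially the same approach as the paper: the corollary is a direct instance of Theorem \ref{main theorem C1 for nonconvex compacta} with $C=S$ finite (hence compact), noting that any mapping on a finite set is automatically continuous. One small remark on the $(CW^1)$ verification: the detour through affineness of $F$ on the segment $[y,x]$ and then the limit from interior points is unnecessary --- since you have already observed that $h\leq F$ globally and $F(x)=h(x)$, the point $x$ itself is a global minimum of the differentiable function $F-h$, so $\nabla F(x)=\nabla h(x)=\nabla F(y)$ immediately (this is exactly Lemma \ref{differentiability criterion for convex functions} in the paper); the phrase ``differentiability plus convexity pin down the full gradient at interior points'' is the one spot that would benefit from being replaced by this explicit global-minimum argument, since affineness along the segment alone only controls the directional derivative in the direction $x-y$.
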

In \cite[Theorem 14]{Mulansky} it is proved that, for every finite set of {\em strictly convex data} in $\R^n$ there always exists a $C^\infty$ convex function (or even a convex polynomial) that interpolates the given data. However, in the case that the data are convex but not strictly convex, the above corollary seems to be new.

Let us conclude this introduction with two geometrical applications of Corollary \ref{C11 convex extension} and Theorem \ref{main theorem C1 for nonconvex compacta} concerning characterizations of compact subsets $K$ of $\Rn$ which can be interpolated by
boundaries of $C^{1,1}$ or $C^1$ convex bodies (with prescribed unit outer normals on $K$). Namely, if
$K$ is a compact subset of $\R^n$ and we are given an $M$-Lipschitz (resp. continuous) map $N:K\to\R^n$ such that $|N(y)|=1$ for every $y\in K$, it is natural to ask what conditions on $K$ and $N$ are necessary and sufficient for $K$ to be a subset of the boundary of a $C^{1,1}$ (resp. $C^1$) convex body $V$ such that $0\in\textrm{int}(V)$ and $N(y)$ is outwardly normal to $\partial V$ at $y$ for every $y\in K$. A suitable set of conditions in the $C^{1,1}$ case is:
\begin{align*}
& (\mathcal{O}) & \langle N(y), y \rangle >0 \textrm{ for all } y\in K; \\
& (\mathcal{K}\mathcal{W}^{1,1}) & \langle N(y), y-x\rangle \geq \frac{\eta}{2M} |N(y)-N(x)|^2 \textrm{ for all } x, y\in K,
\end{align*}
for some $\eta\in (0,\frac{1}{2}]$. Our main result in this direction is as follows.
\begin{theorem}\label{corollary for C11 convex bodies}
Let $K$ be a compact subset of $\R^n$, and let $N:K\to\R^n$ be an $M$-Lipschitz mapping such that $|N(y)|=1$ for every $y\in K$.
Then the following statements are equivalent:
\begin{enumerate}
\item There exists a $C^{1,1}$ convex body $V$ with $0\in\textrm{int}(V)$ and such that $K\subseteq \partial V$ and $N(y)$ is outwardly normal to $\partial V$ at $y$ for every $y\in K$.
\item  $K$ and $N$ satisfy conditions $(\mathcal{O})$ and  $(\mathcal{K}\mathcal{W}^{1,1})$.
\end{enumerate}
\end{theorem}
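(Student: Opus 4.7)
The plan is to reduce both implications to Corollary \ref{C11 convex extension} applied to the $1$-jet $(f,G)=(0,N)$ on $K$; for this choice, $(CW^{1,1})$ with $\delta=\eta/(2M)$ is exactly $(\mathcal{K}\mathcal{W}^{1,1})$.

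For $(2)\Rightarrow(1)$, I would first invoke Corollary \ref{C11 convex extension} to obtain a convex $F\in C^{1,1}(\mathbb{R}^n)$ with $F|_K=0$ and $\nabla F|_K=N$. By convexity, $F(0)$ is bounded below by the value at the origin of the minimal convex Lipschitz extension $m(x):=\sup_{y\in K}\langle N(y),x-y\rangle$, and $m(0)=-\inf_{y\in K}\langle N(y),y\rangle<0$ by $(\mathcal{O})$; choosing $F$ as in the proof of Corollary \ref{C11 convex extension} (or as a $C^{1,1}$-smoothing of $m$ preserving the jet $(0,N)$ on $K$) yields $F(0)<0$. I would then compactify the sublevel set by setting $\tilde F:=F+\Phi$ with $\Phi(x):=[\max\{0,|x|^2-R^2\}]^2$, a convex $C^{1,1}$ function vanishing on $\overline{B}(0,R)\supset K$ and growing to $+\infty$ with $|x|$. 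Then $\tilde F$ is convex $C^{1,1}$, agrees with $F$ in both values and gradient on $K$, satisfies $\tilde F(0)=F(0)<0$, and has compact sublevel set $V:=\{\tilde F\leq 0\}$. The inequality $\tilde F(0)<0$ forces $\nabla\tilde F\neq 0$ on $\partial V=\{\tilde F=0\}$ (else $\tilde F$ would attain a minimum there, contradicting $\tilde F(0)<0$), so $V$ is a $C^{1,1}$ convex body; by construction $0\in\interior(V)$, $K\subset\partial V$, and the outward unit normal at $y\in K$ equals $\nabla\tilde F(y)/|\nabla\tilde F(y)|=N(y)$.

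For $(1)\Rightarrow(2)$, condition $(\mathcal{O})$ is immediate, since the supporting hyperplane to $V$ at $y\in\partial V$ with outward normal $N(y)$ strictly separates $0\in\interior(V)$ from the exterior of $V$. For $(\mathcal{K}\mathcal{W}^{1,1})$, I would work with $h:=\mu_V^2$, where $\mu_V$ is the Minkowski functional of $V$. Since $V$ is a $C^{1,1}$ convex body with $0\in\interior(V)$, $\mu_V$ is convex, positively $1$-homogeneous, and $C^{1,1}$ on $\mathbb{R}^n\setminus\{0\}$; it follows that $h$ is convex and $C^{1,1}$ on all of $\mathbb{R}^n$ (positive $1$-homogeneity of $\nabla h$ promotes Lipschitz continuity on $\mathbb{S}^{n-1}$ to global Lipschitz continuity). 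By Euler's identity, for $y\in K\subset\partial V$ we have $h(y)=1$ and $\nabla h(y)=\lambda(y)N(y)$ with $\lambda(y):=2/\langle N(y),y\rangle\in[\lambda_{\min},\lambda_{\max}]$, $\lambda_{\min}>0$. Applying the standard co-coercivity inequality
\[
h(x)-h(y)-\langle\nabla h(y),x-y\rangle\geq \frac{1}{2L}\bigl|\nabla h(x)-\nabla h(y)\bigr|^2,\qquad L:=\textrm{Lip}(\nabla h),
\]
to $x,y\in K$ (both at the level $h=1$), together with the identity $|\lambda(x)N(x)-\lambda(y)N(y)|^2=(\lambda(x)-\lambda(y))^2+\lambda(x)\lambda(y)|N(x)-N(y)|^2\geq \lambda_{\min}^2|N(x)-N(y)|^2$, yields $\langle N(y),y-x\rangle\geq [\lambda_{\min}^2/(2L\lambda_{\max})]\,|N(x)-N(y)|^2$. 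This is $(\mathcal{K}\mathcal{W}^{1,1})$ with $\eta:=\min\{1/2,\,M\lambda_{\min}^2/(L\lambda_{\max})\}\in(0,1/2]$.

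The hard part will be justifying $F(0)<0$ in $(2)\Rightarrow(1)$: the abstract statement of Corollary \ref{C11 convex extension} only provides some convex $C^{1,1}$ extension, and valid extensions can in principle have $F(0)\geq 0$ (for example in dimension one with $K=\{1\}$, $N(1)=1$, the extension $F(x)=(x-1)+c(x-1)^2$ satisfies $F(0)=c-1>0$ for $c>1$). Overcoming this requires either exploiting the specific form of the construction in the proof of Corollary \ref{C11 convex extension} or independently building a $C^{1,1}$ convex extension close enough to the minimal $m$ to inherit its negativity at the origin.
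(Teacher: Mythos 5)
Your direction $(1)\Rightarrow(2)$ is correct, and in fact supplies details that the paper leaves to the reader: the paper only remarks that this direction is ``similar to the $C^{1}$ case'' (Minkowski functional composed with a truncation), whereas you work with $h=\mu_V^2$, apply co-coercivity, and use the decomposition $|\lambda(x)N(x)-\lambda(y)N(y)|^2=(\lambda(x)-\lambda(y))^2+\lambda(x)\lambda(y)|N(x)-N(y)|^2$ to drop the scalar factors. That computation is sound.

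The direction $(2)\Rightarrow(1)$, however, has the gap you yourself flag, and your proposed ways of filling it do not work. You observe that any convex extension of the jet $(0,N)$ on $K$ satisfies $F\geq m$ where $m(x)=\sup_{y\in K}\langle N(y),x-y\rangle$, and that $m(0)<0$ by $(\mathcal{O})$; but this gives only a \emph{lower} bound $F(0)\geq m(0)$, which is of no help. What you need is an \emph{upper} bound $F(0)<0$, and the construction in the proof of Corollary \ref{C11 convex extension} gives $F\geq m_C(f)$ globally with no useful upper estimate at the isolated off-jet point $0$. Your 1-dimensional example already shows that extensions of $(0,N)$ from $K$ alone can have $F(0)>0$, so ``choosing $F$ as in the proof'' is not a safe escape. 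The paper's fix is different and cleaner: it enlarges the jet domain to $C=K\cup\{0\}$ and \emph{prescribes} the jet at the origin, setting $f=1$ on $K$, $f(0)=\alpha$, $G=N$ on $K$, $G(0)=0$, with $\alpha<1$ chosen so that $0<1-\alpha+\eta/(2M)<\min_{y\in K}\langle N(y),y\rangle$. One then checks $(CW^{1,1})$ on this two-piece jet directly (the inequality between $0$ and $y\in K$ in one direction needs $1-\alpha\geq\delta$, and in the other direction needs $\alpha-1+\langle N(y),y\rangle\geq\delta$, both guaranteed by the choice of $\alpha$), applies Corollary \ref{C11 convex extension}, and takes $V=\{F\leq 1\}$. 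Then $F(0)=\alpha<1$ is forced by the extension property rather than argued after the fact, $0\in\interior(V)$, $K\subseteq\partial V$ since $F=1$ on $K$, and $\nabla F\neq0$ on $\partial V$ because $F$ attains values $<1$ inside and $\to\infty$ at infinity. Your added convex $C^{1,1}$ coercive term $\Phi$ to compactify the sublevel set is a fine alternative to the paper's observation that the construction can be arranged to yield a coercive $F$, but it does not remedy the missing upper bound on $F(0)$; you should adopt the jet-augmentation trick instead.
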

This result may be compared to \cite{GhomiJDG2001}, where M. Ghomi showed how to construct $C^{m}$ smooth strongly convex bodies $V$ with prescribed strongly convex submanifolds and tangent planes. In the same spirit, the above Theorem allows us to deal with arbitrary compacta instead of manifolds, and to drop the strong convexity assumption, in the particular case of $C^{1,1}$ bodies. Similarly, for interpolation by $C^1$ bodies, the pertinent conditions are:
\begin{align*}
& (\mathcal{O}) & \langle N(y), y \rangle >0 \textrm{ for all } y\in K; \\
& (\mathcal{K}) & \langle N(y), x-y\rangle\leq 0 \textrm{ for all } x, y\in K; \\
& (\mathcal{K}\mathcal{W}^1) & \langle N(y), x-y\rangle=0 \implies N(x)=N(y) \textrm{ for all } x, y\in K,
\end{align*}
and our result for the class $C^1$ then reads as follows.
\begin{theorem}\label{corollary for C1 convex bodies}
Let $K$ be a compact subset of $\R^n$, and let $N:K\to\R^n$ be a continuous mapping such that $|N(y)|=1$ for every $y\in K$.
Then the following statements are equivalent:
\begin{enumerate}
\item There exists a $C^1$ convex body $V$ with $0\in\textrm{int}(V)$ and such that $K\subseteq \partial V$ and $N(y)$ is outwardly normal to $\partial V$ at $y$ for every $y\in K$.
\item $K$ and $N$ satisfy conditions $(\mathcal{O})$, $(\mathcal{K})$, and $(\mathcal{K}\mathcal{W}^1)$.
\end{enumerate}
\end{theorem}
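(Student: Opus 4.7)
For the nontrivial direction $(2)\Rightarrow(1)$, my plan is to obtain $V$ as the $0$-sublevel set of a coercive $C^1$ convex function built from Theorem~\ref{main theorem C1 for nonconvex compacta}. First, condition $(\mathcal{O})$ forces $0\notin K$ (otherwise $\langle N(0),0\rangle=0$), so we may choose $0<\delta<\min_{y\in K}\langle N(y),y\rangle$ and, on the compact set $K_0:=K\cup\{0\}$, define a continuous $1$-jet $(f,G)$ by $f\equiv 0$, $G=N$ on $K$, and $f(0)=-\delta$, $G(0)=0$. Next, I would verify that $(f,G)$ satisfies conditions $(C)$ and $(CW^1)$ on $K_0$. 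For pairs $x,y\in K$ these are precisely $(\mathcal{K})$ and $(\mathcal{K}\mathcal{W}^1)$; the mixed cases reduce to elementary checks (for $y=0$, $x\in K$ the inequality in $(C)$ is $\delta\ge 0$ and the hypothesis of $(CW^1)$ cannot hold; for $y\in K$, $x=0$ the inequality in $(C)$ is $\langle N(y),y\rangle\ge\delta$, while the equality hypothesis of $(CW^1)$ would force $\langle N(y),y\rangle=\delta$, ruled out by the strict choice of $\delta$). Theorem~\ref{main theorem C1 for nonconvex compacta} then yields a convex $F\in C^1(\R^n)$ with $F=f$ and $\nabla F=G$ on $K_0$; since $\nabla F(0)=0$, the point $0$ is a global minimum of the convex function $F$, so $F\ge-\delta$ on $\R^n$.

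The main obstacle is that the set $\{F\le 0\}$ need not be bounded. To cure this I would fix $R>0$ with $K_0\subset B(0,R/2)$, pick any $\varepsilon>0$, and put
$$
\tilde F(x) := F(x) + \varepsilon\bigl((|x|^2-R^2)_+\bigr)^2.
$$
The corrector is easily checked to be $C^1$ and convex on $\R^n$, vanishes on $B(0,R)$, and grows like $\varepsilon|x|^4$ at infinity, so $\tilde F\in C^1(\R^n)$ is convex and coercive. Moreover $\tilde F=F$ on $B(0,R)$, which gives $\tilde F=0$ and $\nabla\tilde F=N$ on $K$, and $\tilde F(0)=-\delta<0$. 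Hence $V:=\{\tilde F\le 0\}$ is a bounded, closed, convex set with $0\in\operatorname{int}(V)$ and $K\subseteq\partial V$ (moving from $y\in K$ in the direction $N(y)=\nabla\tilde F(y)$ strictly increases $\tilde F$).

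To conclude $V$ is a $C^1$ convex body it suffices to verify $\nabla\tilde F\ne 0$ on $\partial V$: a convex $C^1$ function has zero gradient only at its global minima, and $\min\tilde F\le\tilde F(0)=-\delta<0$ places every such critical point in $\operatorname{int}(V)$. The implicit function theorem then makes $\partial V$ a $C^1$ hypersurface with outer unit normal $\nabla\tilde F/|\nabla\tilde F|$, which equals $N(y)$ on $K$. The reverse implication $(1)\Rightarrow(2)$ is routine: if $V$ is a $C^1$ convex body with the stated properties, $(\mathcal{O})$ follows from $0\in\operatorname{int}(V)$ together with the supporting hyperplane at $y$; $(\mathcal{K})$ follows from $K\subseteq V$ and that same supporting half-space; and $(\mathcal{K}\mathcal{W}^1)$ follows from the uniqueness of the supporting hyperplane at each boundary point of a $C^1$ convex body (if $x\in\partial V$ lies on the supporting hyperplane at $y$, that hyperplane also supports $V$ at $x$, so it coincides with the unique tangent hyperplane at $x$, forcing $N(x)=N(y)$).
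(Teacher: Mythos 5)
Your proof is correct, and it deviates from the paper's in two places worth noting. For $(2)\Rightarrow(1)$, the paper uses the jet $f\equiv 1$ on $K$, $f(0)=\alpha<1$, $G=N$ on $K$, $G(0)=0$, and takes $V=\{F\le 1\}$; your choice $f\equiv 0$, $f(0)=-\delta$, $V=\{\tilde F\le 0\}$ is the same up to an additive constant, and your verification of $(C)$ and $(CW^1)$ on $K_0$ is complete. Where you genuinely diverge is in securing coercivity: the paper simply invokes the fact (visible in the proof of Theorem~\ref{main theorem C1 for nonconvex compacta}, where $F=\mathrm{conv}(g)$ with $g$ coercive) that the extension can be taken with $\lim_{|x|\to\infty}F(x)=\infty$, whereas you add the explicit $C^1$ convex corrector $\varepsilon((|x|^2-R^2)_+)^2$, which vanishes near $K_0$ and forces coercivity outright. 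Both are valid; yours is more self-contained and does not require peeking inside the proof of Theorem~\ref{main theorem C1 for nonconvex compacta}, while the paper's is shorter once that remark is granted. For $(1)\Rightarrow(2)$ your approach is genuinely different and, I think, cleaner: the paper builds $F=\phi(\mu_V)$ from the Minkowski functional and then reads off $(C)$, $(CW^1)$ on $\partial V$ via the necessity direction of Theorem~\ref{main theorem C1 for nonconvex compacta}, whereas you argue directly from convex geometry — supporting half-spaces give $(\mathcal{O})$ and $(\mathcal{K})$, and uniqueness of the supporting hyperplane at each boundary point of a $C^1$ convex body gives $(\mathcal{K}\mathcal{W}^1)$ — without constructing any auxiliary function. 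Your route avoids the machinery of the extension theorem entirely in that direction and makes the geometric content of the three conditions transparent.
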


The rest of this paper is devoted to the proofs of the above results. Most of the main ideas in the proofs of Theorem \ref{C1omega convex extension} and \ref{main theorem C1 for nonconvex compacta} are similar, but the case $C^{1, \omega}$ is considerably more technical, so, in order to convey these ideas more easily, we will begin by proving Theorems \ref{main theorem C1}, \ref{main theorem C1 for C with empty interior}, \ref{main theorem C1 for nonconvex compacta} and \ref{corollary for C1 convex bodies} in Section 2.
The proofs of Theorems \ref{C1omega convex extension} and \ref{corollary for C11 convex bodies} will be provided in Section 3. 

\medskip

\section{Proofs of the $C^{1}$ results}

Theorem \ref{main theorem C1} is a consequence of Theorem \ref{main theorem C1 for C with empty interior} and of the following result.

\begin{lemma}
Let $f\in C^{1}(\R^n)$, $C\subset\R^n$ be a compact convex set with nonempty interior, $x_0, y_0\in C$. Assume that $f$ is convex on $C$ and
$$
f(x_0)-f(y_0)=\langle \nabla f(y_0), x_0-y_0\rangle.
$$
Then $\nabla f(x_0)=\nabla f(y_0)$.
\end{lemma}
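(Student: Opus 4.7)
The plan is to set $g(x) := f(x) - f(y_0) - \langle \nabla f(y_0), x - y_0\rangle$, so that $g \in C^1(\R^n)$ is convex on $C$, with $g(y_0) = 0$ and $\nabla g(y_0) = 0$; the hypothesis gives $g(x_0) = 0$. The convexity inequality applied at $y_0$ yields $g(z) \geq g(y_0) + \langle \nabla g(y_0), z - y_0\rangle = 0$ for every $z \in C$, so $x_0$ (like $y_0$) is a minimizer of $g$ on $C$. Since $\nabla g(x_0) = \nabla f(x_0) - \nabla f(y_0)$, it suffices to prove $\nabla g(x_0) = 0$.

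The core of the argument is the two-sided estimate
\[
0 \leq \langle \nabla g(x_0), z - x_0\rangle \leq g(z) \quad \textrm{for every } z \in C.
\]
The lower bound comes from first-order optimality at $x_0$: for $z\in C$ and $t\in(0,1]$, $(g(x_0 + t(z - x_0)) - g(x_0))/t \geq 0$, and letting $t \to 0^+$ gives $\langle \nabla g(x_0), z-x_0\rangle \geq 0$. The upper bound is the convexity inequality at $x_0$: $g(z) \geq g(x_0) + \langle \nabla g(x_0), z - x_0\rangle$, derivable from $g((1-t)x_0 + tz)\leq (1-t)g(x_0)+tg(z)$ and $g(x_0)=0$. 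Setting $z = y_0$ in the estimate forces $\langle \nabla g(x_0), y_0 - x_0\rangle = 0$, so $\langle \nabla g(x_0), z - x_0\rangle = \langle \nabla g(x_0), z - y_0\rangle$ for every $z$.

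Finally, Taylor's theorem at $y_0$ (where $\nabla g$ vanishes) gives $g(z) = o(|z-y_0|)$ as $z \to y_0$, and combining with the estimate yields $0 \leq \langle \nabla g(x_0), z - y_0\rangle \leq o(|z-y_0|)$ as $z\to y_0$ in $C$. Using the nonempty-interior hypothesis, fix $z^* \in \interior(C)$ and $r > 0$ with $B(z^*, r) \subset C$. For each $\xi \in B(z^* - y_0, r)$ and $t \in (0,1]$, the point $z_t := y_0 + t\xi = (1-t)y_0 + t(y_0 + \xi)$ lies in $C$ by convexity, and applying the estimate to $z_t$ yields $0 \leq t\langle \nabla g(x_0), \xi\rangle \leq o(t)$ as $t\to 0^+$. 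Dividing by $t$ and letting $t \to 0^+$ gives $\langle \nabla g(x_0), \xi\rangle = 0$ for every $\xi$ in the open ball $B(z^* - y_0, r)$, hence $\nabla g(x_0) = 0$, i.e., $\nabla f(x_0) = \nabla f(y_0)$.

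The main obstacle is that $x_0$ and $y_0$ may both lie on $\partial C$, in which case first-order optimality by itself only places $\nabla g(x_0)$ in the (generally non-trivial) polar of the tangent cone $T_C(x_0)$. The maneuver that rescues the proof is to transfer the squeeze estimate from $x_0$ to $y_0$ using $\nabla g(x_0) \perp (x_0 - y_0)$, since at $y_0$ the nonempty-interior assumption on $C$ supplies an open set of admissible directions, and this is precisely what forces $\nabla g(x_0)=0$.
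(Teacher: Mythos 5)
Your proof is correct, and it takes a genuinely different route from the paper's. The paper normalizes to $g$ with $g(x_0)=g(y_0)=0$ and $\nabla g(y_0)=0$, then first shows that $g$ vanishes identically on the segment $[x_0,y_0]$ (since both endpoints attain the minimum of a convex function, and the minimizer set is convex), next picks a ball $B(a_0,r_0)\subset\interior(C)$ and uses a triangle construction to show that any two gradients $\nabla g(z_0),\nabla g(z'_0)$ at relative-interior points of the segment agree (by a contradiction argument comparing one-sided directional derivatives into the triangle), and finally invokes continuity of $\nabla g$ to propagate this to the endpoints $x_0,y_0$. You instead work directly at $x_0$: from convexity of $g$ on $C$ and the optimality $g\geq 0=g(x_0)$ you extract the two-sided estimate $0\leq\langle\nabla g(x_0),z-x_0\rangle\leq g(z)$, specialize at $z=y_0$ to get $\nabla g(x_0)\perp(x_0-y_0)$, transfer the estimate to $\langle\nabla g(x_0),z-y_0\rangle$, and then exploit that $\nabla g(y_0)=0$ so that $g(z)=o(|z-y_0|)$ near $y_0$, squeezing the linear form to zero on the open cone of admissible directions at $y_0$. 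Both proofs use the nonempty-interior hypothesis essentially, but in different places and forms: the paper uses it to carve a planar triangle whose relative interior sits in $\interior(C)$, whereas you use it to get a full-dimensional set of directions $\xi$ along which $y_0+t\xi\in C$ for small $t>0$. Your route is shorter and more analytic, dispensing with both the constancy-on-the-segment step and the geometric triangle argument; the paper's route is more pictorial and exhibits exactly where along the segment the gradient is being compared.
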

\begin{proof}
{\bf Case 1.} Suppose first that $f(x_0)=f(y_0)=0$. We may of course assume that $x_0\neq y_0$ as well. Then we also have $\langle \nabla f(y_0), x_0-y_0\rangle=0$. If we consider the $C^1$ function $\varphi(t)=f\left(y_0+t(x_0-y_0)\right)$, we have that $\varphi$ is convex on the interval $[0,1]$ and $\varphi'(0)=0$, hence $0=\varphi(0)=\min_{t\in[0,1]}\varphi(t)$, and because
$\varphi(0)=\varphi(1)$ and the set of minima of a convex function on a convex set is convex, we deduce that $\varphi(t)=0$ for all $t\in [0,1]$. This shows that $f$ is constant on the segment $[x_0, y_0]$ and in particular we have
$$
\langle \nabla f(z), z_0-z'_{0}\rangle=0 \textrm{ for all } z, z_0, z'_0\in [x_0, y_0].
$$
Now pick a point $a_0$ in the interior of $C$ and a number $r_0>0$ so that $B(a_0, r_0)\subset \textrm{int}(C)$. Since $C$ is a compact convex body, every ray emanating from a point $a\in B(a_0, r_0)$ intersects the boundary of $C$ at exactly one point. This implies that (even though the segment $[x_0, y_0]$ might entirely lie on the boundary $\partial C$),
for every $a\in B(a_0, r_0)$, the interior of the triangle $\Delta_a$ with vertices $x_0, a, y_0$, relative to the affine plane spanned by these points, is contained in the interior of $C$; we will denote $\textrm{relint}\left(\Delta_a\right)\subset\textrm{int}(C)$.

Let $p_0$ be the unique point in $[x_0, y_0]$ such that $|a_0-p_0|=d\left(a_0, [x_0, y_0]\right)$ (the distance to the segment $[x_0, y_0]$), set $w_0=a_0-p_0$, and denote $v_{a}:=a-p_0$ for each $a\in B(a_0, r_0)$. Thus for every $a\in B(a_0, r_0)$ we can write $v_a=u_a+w_0$, where $u_a:=a-a_0\in B(0, r_0)$, and in particular we have $\{v_a : a\in B(a_0, r_0)\}=B(w_0, r_0)$.

\begin{claim}
For every $z_0, z'_0$ in the relative interior of the segment $[x_0, y_0]$, we have $\nabla f(z_0)=\nabla f(z'_0)$.
\end{claim}
Let us prove our claim. It is enough to show that $\langle \nabla f(z_0)-\nabla f(z'_0), v_a\rangle=0$ for every $a\in B(a_0, r_0)$ (because if a linear form vanishes on a set with nonempty interior, such as $B(w_0, r_0)$, then it vanishes everywhere). So take $a\in B(a_0, r_0)$. Since $z_0$ and $z'_0$ are in the relative interior of the segment $[x_0, y_0]$ and $\textrm{relint}\left(\Delta_a\right)\subset \textrm{int}(C)$, there exists $t_0>0$ such that $z_0+tv_a, z'_0+tv_a \in\textrm{int}(C)$ for every $t\in (0, t_0]$.

If we had $\langle \nabla f(z'_0)-\nabla f(z_0), v_a\rangle>0$ then, because $f$ is convex on $C$ and $f(z_0)=f(z'_0)=0$, $\langle \nabla f(z'_0), z_0-z'_0\rangle =0$, we would get
$$
f(z_0+tv_a)=f(z'_0+z_0-z'_0+tv_a)\geq \langle \nabla f(z'_0), z_0-z'_0+tv_a\rangle=\langle \nabla f(z'_0), tv_a\rangle ,
$$
hence
$$
\lim_{t\to 0^{+}}\frac{f(z_0+tv_a)}{t}\geq \langle \nabla f(z'_0), v_a\rangle > \langle \nabla f(z_0), v_a\rangle = \lim_{t\to 0^{+}}\frac{f(z_0+tv_a)}{t},
$$
a contradiction. By interchanging the roles of $z_0, z'_0$, we see that the inequality $\langle \nabla f(z'_0)-\nabla f(z_0), v_a\rangle<0$ also leads to a contradiction. Therefore
$\langle \nabla f(z'_0)-\nabla f(z_0), v_a\rangle=0$ and the Claim is proved.

Now, by using the continuity of $\nabla f$, we easily conclude the proof of the Lemma in Case 1.

\medskip

\noindent {\bf Case 2.} In the general situation, let us consider the function $h$ defined by
$$
h(x)=f(x)-f(y_0)-\langle \nabla f(y_0), x-y_0\rangle ,  \,\,\, x\in\R^n.
$$
It is clear that $h$ is convex on $C$, and $h\in C^{1}(\R^n)$. We also have
$$\nabla h(x)=\nabla f(x)-\nabla f(y_0),
$$
and in particular $\nabla h(y_0)=0$. Besides, using the assumption
that $f(x_0)-f(y_0)=\langle \nabla f(y_0), x_0-y_0\rangle$, we have $h(x_0)=0=h(y_0)$, and $h(x_0)-h(y_0)=\langle \nabla h(y_0), x_0-y_0\rangle$. Therefore we can apply Case 1 with $h$ instead of $f$ and we get that $\nabla h(x_0)=\nabla h(y_0)=0$, which implies that $\nabla f(x_0)=\nabla f(y_0)$.
\end{proof}

\medskip

From the above Lemma it is clear that $(CW^1)$ is a necessary condition for a convex function $f:C\to\R$ (and a mapping $G:C\to\R^n$) to have a convex, $C^1$ extension $F$ to all of $\R^n$ with $\nabla F=G$ on $C$, and also that if the jet $(f,G)$ satisfies $(W^1)$ and $\textrm{int}(C)\neq\emptyset$ then $(f,G)$ automatically satisfies $(CW^1)$ on $C$ as well. It is also obvious that Theorem \ref{main theorem C1 for C with empty interior} is an immediate consequence of Theorem \ref{main theorem C1 for nonconvex compacta}, and that the condition $(C)$ is also necessary in Theorem \ref{main theorem C1 for nonconvex compacta}. Thus, in order to prove Theorems \ref{main theorem C1}, \ref{main theorem C1 for C with empty interior}, and \ref{main theorem C1 for nonconvex compacta} it will be sufficient to establish the {\em if} part of Theorem \ref{main theorem C1 for nonconvex compacta}.

\subsection{Proof of Theorem \ref{main theorem C1 for nonconvex compacta}.}

Because $f$ satisfies $(C)$ and $G$ is continuous, by Remark \ref{C implies W1} we know that $(f,G)$ satisfies $(W^1)$. Then, according to Whitney's Extension Theorem, there exists $\widetilde{f}\in C^{1}(\R^n)$ such that, on $C$, we have $\widetilde{f}=f$ and $\nabla \widetilde{f}=G$. 
\begin{claim}\label{control of lip constant of Whitney extension for convex}
If $f$ satisfies $(C)$ then we can further assume that there exists a constant $k(n)$, only depending on $n$, such that 
\begin{equation}
\textrm{Lip}(\widetilde{f})=\sup_{x\in\R^n}|\nabla \widetilde{f}(x)|\leq k(n)\, \sup_{y\in C}|G(y)|.
\end{equation}
\end{claim}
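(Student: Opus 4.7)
The plan is to take $\widetilde f$ to be the \emph{standard} Whitney extension of the jet $(f,G)$ and to derive the Lipschitz estimate for $\nabla\widetilde f$ directly from the classical partition-of-unity formula, using the fact that condition $(C)$ already forces $f$ to be Lipschitz on $C$ with constant controlled by $\sup_C|G|$. Indeed, for $x,y\in C$, $(C)$ gives $f(x)-f(y)\geq\langle G(y),x-y\rangle\geq -L\,|x-y|$ where $L:=\sup_{y\in C}|G(y)|$, and interchanging the roles of $x$ and $y$ yields $|f(x)-f(y)|\leq L\,|x-y|$ for every $x,y\in C$, so $f|_C$ is $L$-Lipschitz.

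Next I would apply the Whitney construction in its usual form: decompose $\R^n\setminus C$ into the Whitney family of cubes $\{Q_j\}$ with slight enlargements $\{Q_j^*\}$ of bounded overlap $N(n)$, fix a subordinate $C^\infty$ partition of unity $\{\phi_j\}$ satisfying $|\nabla\phi_j|\leq k_1(n)/\operatorname{diam}(Q_j)$, and for each $j$ select $y_j\in C$ with $|y_j-Q_j|\leq c(n)\operatorname{diam}(Q_j)$; then set
$$\widetilde f(x)=\sum_{j}\phi_j(x)\bigl[f(y_j)+\langle G(y_j),x-y_j\rangle\bigr]\text{ for }x\notin C,\qquad \widetilde f=f\text{ on }C.$$
By Remark \ref{C implies W1} the jet $(f,G)$ satisfies $(W^1)$, so Whitney's theorem certifies $\widetilde f\in C^1(\R^n)$ and $\nabla\widetilde f=G$ on $C$.

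For the gradient bound, fix $x\notin C$, put $d:=\operatorname{dist}(x,C)$, and choose $y_0\in C$ with $|y_0-x|=d$. Using $\sum_j\nabla\phi_j(x)=0$ one rewrites
$$\nabla\widetilde f(x)=\sum_{j}\nabla\phi_j(x)\Bigl[f(y_j)-f(y_0)-\langle G(y_0),y_j-y_0\rangle+\langle G(y_j)-G(y_0),x-y_j\rangle\Bigr]+\sum_{j}\phi_j(x)G(y_j).$$
Each inner bracket is $\leq 2L\bigl(|y_j-y_0|+|x-y_j|\bigr)$ by the Lipschitz bound above together with $|G|\leq L$, and by the Whitney decomposition both $|y_j-y_0|$ and $|x-y_j|$ are $\leq c(n)\,d$ while $|\nabla\phi_j(x)|\leq k_1(n)/d$. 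Summing over the at most $N(n)$ indices with $\phi_j(x)\neq 0$ gives $|\nabla\widetilde f(x)|\leq k(n)L$; at points of $C$ the bound is immediate since $\nabla\widetilde f=G$ there.

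The main delicate point is that the $d$-scaling in $|\nabla\phi_j|$ and in $|y_j-y_0|,\,|x-y_j|$ must cancel uniformly in $x$, including for $x$ far from the compact set $C$ where the relevant Whitney cubes are large; without the Lipschitz estimate extracted from $(C)$ the standard Whitney bound only yields $|\nabla\widetilde f|\leq k(n)\bigl(\sup_C|G|+\operatorname{Lip}(f|_C)\bigr)$, which is exactly the weaker type of estimate flagged immediately after the statement of Theorem \ref{main theorem C1 for nonconvex compacta}. It is therefore condition $(C)$ that upgrades this to the cleaner bound asserted in the claim.
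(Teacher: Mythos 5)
Your proposal is correct and follows essentially the same route as the paper: the same standard Whitney construction, the same key trick of using $\sum_j\nabla\varphi_j=0$ to subtract the polynomial based at the nearest point of $C$, and the same role for condition $(C)$ in bounding the resulting difference of Taylor polynomials. The only cosmetic difference is that you first repackage $(C)$ as ``$f|_C$ is $L$-Lipschitz'' and then use the triangle inequality, whereas the paper bounds the quantity $f(p_j)-f(b)-\langle G(b),p_j-b\rangle$ directly by a two-sided application of $(C)$; these are the same estimate.
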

\begin{proof}
Let us recall the construction of the function $\tilde{f}$. Consider a Whitney's partition of unity $\lbrace \varphi_j \rbrace_j$ associated to the family of Whitney's cubes $\lbrace Q_j, \: Q_j^* \rbrace_j$ decomposing $\Rn \setminus C$ (see the following section or \cite[Chapter VI]{Stein} for notation). Define polynomials $P_{z}(x) = f(z) + \langle G(z), x-z \rangle$ for all $x\in \Rn$ and all $z\in C.$ For every $j,$ find a point $p_j\in C$ such that $d(C,Q_j)=d(p_j,Q_j).$ Then define the function $\tilde{f}$ by
\begin{equation}\label{definition of the Whitney extension}
\tilde{f}(x)=  \left\lbrace
	\begin{array}{ccl}
	\sum_{j} P_{p_j}(x) \varphi_j(x) & \mbox{ if } x\in \R^n\setminus C \\
	f(x) & \mbox{ if }  x\in C.
	\end{array}
	\right.
\end{equation}
As a particular case of the proof of the Whitney extension theorem, we know that this function $\tilde{f}$ is of class $C^1(\Rn)$, extends $f$ to $\Rn$ and satisfies $\nabla \tilde{f} = G$ on $C.$ By the definition of $\tilde{f}$ we can write, for $x \in \Rn \setminus C,$
\begin{equation}\label{derivative of Whitneys extension}
  \nabla \tilde{f}(x)  = \sum_{j} \nabla P_{p_j}(x) \varphi_j(x) + \sum_{j} P_{p_j}(x) \nabla \varphi_j(x).
\end{equation}
Since $\nabla P_{p_j} = G(p_j)$ for all $j,$ the first sum is bounded above by $\sup \lbrace |G(y)| \: : \: y\in C \rbrace :=\|G\|_{\infty}$. In order to estimate the second sum, recall that $\sum_j \nabla \varphi_j = 0$ and find a point $b\in C$ such that $|b-x|= d(x,C)$. Then we write
\begin{equation}\label{trick 1 in estimation of Lip(f)}
 \sum_{j} P_{p_j}(x) \nabla \varphi_j(x) =  \sum_{j} \left(  P_{p_j}(x) -P_b(x) \right) \nabla \varphi_j(x),
\end{equation}
and observe that 
\begin{align*}
P_{p_j}(x)-P_b(x) & = f(p_j)+\langle G(p_j) , x-p_j \rangle - f(b)-\langle G(b), x-b \rangle \\
& = f(p_j)-f(b)-\langle G(b), p_j-b \rangle + \langle G(p_j)-G(b), x-p_j \rangle.
\end{align*}
By the same argument used in Remark \ref{C implies W1} involving condition $(C),$ we have that
\begin{eqnarray*}
& & 0 \leq f(p_j)-f(b)-\langle G(b), p_j-b \rangle \\
& & \leq \langle G(b)-G(p_j), b-p_j \rangle \leq 2 \|G\|_\infty |b-p_j|.
\end{eqnarray*}
On the other hand,
$$
|  \langle G(p_j)-G(b), x-p_j \rangle | \leq 2 \| G \|_\infty |x-p_j|.
$$
These inequalities lead us to 
\begin{equation}
| P_{p_j}(x)-P_b(x) | \leq 2 \| G \|_\infty ( |b-p_j| + |x-p_j| ).
\end{equation} 
For those integers $j$ such that $x\in Q_{j}^{*}$, the results exposed in \cite[Chapter VI]{Stein} show that $|b-p_j| \leq 8 |x-p_j|$, and that $|x-p_j| $ is of the same order as $\diam(Q_j)$ (with constants not even depending on $n$). Hence 
$$
| P_{p_j}(x)-P_b(x) | \lesssim  \| G \|_\infty \diam(Q_j) \quad \text{if} \quad x\in Q_j^*
$$
(by $A\lesssim B$ we mean that $A\leq K B$, where $K$ is a constant only depending on the dimension $n$).
Also, by the properties of the Whitney's partition of unity $\lbrace \varphi_j \rbrace_j$ we know that
$$
 \big| \nabla \varphi_j(x) \big | \lesssim \diam(Q_j)^{-1},
 $$
and because all these sums has at most $N=(12)^n$ nonzero terms, we obtain
 $$
\sum_{Q_j^* \ni x} | P_{p_j}(x)-P_b(x) | \big| \nabla \varphi_j(x) \big | \lesssim  \| G \|_\infty  \sum_{Q_j^* \ni x} \diam(Q_j) \diam(Q_j)^{-1} \lesssim \| G \|_\infty,
$$
which together with \eqref{trick 1 in estimation of Lip(f)} allows us to control the second sum in \eqref{derivative of Whitneys extension} as required.
\end{proof}
Thus we may and do assume in what follows, for simplicity of notation, that $f$ is of class $C^{1}(\R^n)$, with $\nabla f=G$ on $C$, and that $f$ satisfies conditions $(C)$ and $(CW^1)$ on $C$. Occasionally, if the distinction between $\widetilde{f}$ and $f$ matters (e.g. in the estimations of Lipschitz constants involving $\widetilde{f}$), we will nevertheless write $\widetilde{f}$ instead of $f$ in order to prevent any misinterpretation.
Let us consider the function $m(f):\R^n\to\R$ defined by
\begin{equation}
m(f)(x)=\sup_{y\in C}\{f(y)+\langle \nabla f(y), x-y\rangle\}.
\end{equation}
Since $C$ is compact and the function $y\mapsto f(y)+\langle \nabla f(y), x-y\rangle$ is continuous, it is obvious that $m(f)(x)$ is well defined, and in fact the sup is attained, for every $x\in \R^n$. Furthermore, if we set
\begin{equation}
K:=\max_{y\in C}|\nabla f(y)|=\max_{y\in C}|G(y)|
\end{equation}
then each affine function $x\mapsto f(y)+\langle \nabla f(y), x-y\rangle$ is $K$-Lipschitz, and therefore $m(f)$, being a sup of a family of convex and $K$-Lipschitz functions, is convex and $K$-Lipschitz on $\R^n$. Note also that
\begin{equation}
K\leq \textrm{Lip}(\widetilde{f}).
\end{equation}
Moreover, we have 
\begin{equation}
m(f)=f \textrm{ on } C.
\end{equation}
Indeed, if $x\in C$ then, because $f$ satisfies $(C)$ on $C$, we have $f(x)\geq f(y)+\langle \nabla f(y), x-y\rangle$ for every $y\in C$, hence $m(f)(x)\leq f(x)$. On the other hand, we also have $f(x)\leq m(f)(x)$ because of the definition of $m(f)(x)$ and the fact that $x\in C$.

(In the case when $C$ is convex and has nonempty interior, it is easy to see that if $h:\R^n\to\R$ is convex and $h=f$ on $C$, then $m(f)\leq h$. Thus, in this case, $m(f)$ is the minimal convex extension of $f$ to all of $\R^n$, which accounts for our choice of notation. However, if $C$ is convex but has empty interior then there is no minimal convex extension operator. We refer the interested reader to \cite{SchulzSchwartz} for necessary and sufficient conditions for $m(f)$ to be finite everywhere, in the situation when $f:C\to\R$ is convex but not necessarily everywhere differentiable.)

If the function $m(f)$ were differentiable on $\R^n$, there would be nothing else to say. Unfortunately, it is not difficult to construct examples showing that $m(f)$ need not be differentiable outside $C$ (even when $C$ is convex and $f$ satisfies $(CW^1)$, see Example \ref{nonsmoothness of m(f)} at the end of this section). Nevertheless, a crucial step in our proof is the following fact: $m(f)$ is differentiable on $C$, provided that $f$ satisfies conditions $(C)$ and $(CW^1)$ on $C$.

\begin{lemma}\label{CW1 implies differentiability of m(f) on C}
Let $f\in C^{1}(\R^n)$, let $C$ be a compact subset of $\R^n$ (not necessarily convex), and assume that $f$ satisfies $(C)$ and $(CW^1)$ on $C$. Then, for each $x_0\in C$, the function $m(f)$ is differentiable at $x_0$, with $\nabla m(f)(x_0)=\nabla f(x_0)$.
\end{lemma}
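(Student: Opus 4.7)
The plan is to obtain matching upper and lower bounds on $m(f)(x_0+h) - m(f)(x_0)$ which together force differentiability at $x_0$ with gradient $\nabla f(x_0)$. Since $x_0 \in C$, we already know that $m(f)(x_0) = f(x_0)$. The lower bound is immediate: taking $y = x_0$ in the supremum defining $m(f)(x_0+h)$ gives
$$
m(f)(x_0+h) \;\geq\; f(x_0) + \langle \nabla f(x_0), h\rangle \;=\; m(f)(x_0) + \langle \nabla f(x_0), h\rangle.
$$

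For the upper bound, I would use compactness of $C$ to choose, for each $h \neq 0$, a point $y_h \in C$ attaining the supremum, so that
$$
m(f)(x_0+h) \;=\; \bigl[f(y_h) + \langle \nabla f(y_h), x_0 - y_h\rangle\bigr] + \langle \nabla f(y_h), h\rangle.
$$
Condition $(C)$ applied to the pair $(x_0, y_h) \in C \times C$ shows that the bracket is $\leq f(x_0) = m(f)(x_0)$, hence
$$
m(f)(x_0+h) - m(f)(x_0) \;\leq\; \langle \nabla f(y_h), h\rangle \;=\; \langle \nabla f(x_0), h\rangle + \langle \nabla f(y_h) - \nabla f(x_0), h\rangle.
$$
Combined with the lower bound, differentiability at $x_0$ will follow as soon as I can show that $\nabla f(y_h) \to \nabla f(x_0)$ as $h\to 0$, for then the error term above is $o(|h|)$.

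The core difficulty is thus to prove this convergence, and it is precisely here that condition $(CW^1)$ enters. Combining the lower and upper bounds and rearranging yields
$$
0 \;\leq\; f(x_0) - f(y_h) - \langle \nabla f(y_h), x_0 - y_h\rangle \;\leq\; \langle \nabla f(y_h) - \nabla f(x_0), h\rangle \;\leq\; 2K|h|,
$$
where $K = \max_{y\in C}|\nabla f(y)|$; in particular, the left-hand expression tends to $0$. Now let $(h_k)$ be any null sequence. By compactness of $C$, a subsequence of $(y_{h_k})$ converges to some $y_0 \in C$, and continuity of $f$ and $\nabla f$ gives $f(x_0) - f(y_0) = \langle \nabla f(y_0), x_0 - y_0\rangle$. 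Condition $(CW^1)$ then forces $\nabla f(y_0) = \nabla f(x_0)$.

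Since every subsequence of $(y_h)$ admits a further subsequence along which $\nabla f(y_h) \to \nabla f(x_0)$, and $\nabla f$ is continuous and bounded on $C$, the full net $\nabla f(y_h)$ converges to $\nabla f(x_0)$ as $h \to 0$. Substituting back into the upper bound,
$$
m(f)(x_0+h) - m(f)(x_0) \;=\; \langle \nabla f(x_0), h\rangle + o(|h|),
$$
which is exactly the assertion that $m(f)$ is differentiable at $x_0$ with $\nabla m(f)(x_0) = \nabla f(x_0)$. I expect the main obstacle to be the selection-and-subsequence argument just described: one has to translate the \emph{pointwise} rigidity encoded in $(CW^1)$ into \emph{uniform} convergence of $\nabla f(y_h)$, which crucially requires both compactness of $C$ and continuity of $\nabla f$, in addition to $(C)$ and $(CW^1)$.
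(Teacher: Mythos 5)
Your proof is correct and follows essentially the same route as the paper's: extract a maximizer $y_h\in C$ for the supremum, use compactness and continuity to pass to a limit $y_0$ with $f(x_0)-f(y_0)=\langle\nabla f(y_0),x_0-y_0\rangle$, invoke $(CW^1)$ to conclude $\nabla f(y_0)=\nabla f(x_0)$, and feed this back into the estimate $m(f)(x_0+h)-m(f)(x_0)-\langle\nabla f(x_0),h\rangle\leq|\nabla f(y_h)-\nabla f(x_0)|\,|h|$. The only cosmetic difference is that you argue directly with a sandwich of bounds and a subsequence-of-a-subsequence argument, whereas the paper frames it as a proof by contradiction; the key ideas and estimates are identical.
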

\begin{proof}
Notice that, by definition of $m(f)$ we have, for every $x\in\R^n$,
$$
\langle \nabla f(x_0), x-x_0\rangle + m(f)(x_0)= \langle \nabla f(x_0), x-x_0\rangle + f(x_0)\leq m(f)(x).
$$
Since $m(f)$ is convex, this means that $\nabla f(x_0)$ belongs to $\partial m(f)(x_0)$ (the subdifferential of $m(f)$ at $x_0$). If $m(f)$ were not differentiable at $x_0$ then there would exist a number $\varepsilon>0$ and a sequence $(h_k)$ converging to $0$ in $\R^n$ such that
\begin{equation}\label{contradict differentiability of m(f)}
\frac{ m(f)(x_0+h_k)-m(f)(x_0)-\langle \nabla f(x_0), h_k\rangle}{|h_k|}\geq\varepsilon \,\,\, \textrm{ for every } k\in\N.
\end{equation}
Because the sup defining $m(f)(x_0+h_k)$ is attained, we obtain a sequence $(y_k)\subset C$ such that
$$
m(f)(x_0+h_k)=f(y_k)+\langle \nabla f(y_k), x_0+h_k-y_k\rangle,
$$
and by compactness of $C$ we may assume, up to passing to a subsequence, that $(y_k)$ converges to some point $y_0\in C$. Because $f=m(f)$ on $C$, and by continuity of $f$, $\nabla f$, and $m(f)$ we then have
\begin{eqnarray*}
& &f(x_0)=m(f)(x_0)=\lim_{k\to\infty}m(f)(x_0+h_k)=\\
& &\lim_{k\to\infty} \left(f(y_k)+\langle \nabla f(y_k), x_0+h_k-y_k\rangle\right)= f(y_0)+\langle \nabla f(y_0), x_0-y_0\rangle,
\end{eqnarray*}
that is, $f(x_0)-f(y_0)=\langle \nabla f(y_0), x_0-y_0\rangle$. Since $x_0, y_0\in C$ and $f$ satisfies $(CW^1)$, this implies that $\nabla f(x_0)=\nabla f(y_0)$.
And because $m(f)(x_0)\geq f(y_k)+\langle \nabla f(y_k), x_0-y_k\rangle$ by definition of $m(f)$, we then have
\begin{eqnarray*}
& & \frac{ m(f)(x_0+h_k)-m(f)(x_0)-\langle \nabla f(x_0), h_k\rangle}{|h_k|}\leq\\
& & \frac{ f(y_k)+ \langle \nabla f(y_k), x_0+h_k-y_k\rangle -f(y_k)-\langle \nabla f(y_k), x_0-y_k\rangle -\langle \nabla f(x_0), h_k\rangle}{|h_k|}=\\
& &\frac{\langle \nabla f(y_k)-\nabla f(x_0), h_k\rangle}{|h_k|}\leq |\nabla f(y_k)-\nabla f(x_0)| = |\nabla f(y_k)-\nabla f(y_0)|,
\end{eqnarray*}
from which we deduce, using the continuity of $\nabla f$, that
$$
\limsup_{k\to\infty}\frac{ m(f)(x_0+h_k)-m(f)(x_0)-\langle \nabla f(x_0), h_k\rangle}{|h_k|}\leq 0,
$$
in contradiction with $(\ref{contradict differentiability of m(f)})$.
\end{proof}

Now we proceed with the rest of the proof of Theorem \ref{main theorem C1 for nonconvex compacta}. Our strategy will be to use the differentiability of $m(f)$ on $\partial C$ in order to construct a (not necessarily convex) differentiable function $g$ such that $g=f$ on $C$,
$g\geq m(f)$ on $\R^n$, and $\lim_{|x|\to\infty}g(x)=\infty$. Then we will define $F$ as the convex envelope of $g$, which will be of class $C^1(\R^n)$ and will coincide with $f$ on $C$.

For each $\varepsilon>0$, let $\theta_{\varepsilon}:\R\to\R$ be defined by
$$
\theta_{\varepsilon}(t)   =  \left\lbrace
	\begin{array}{ccl}
	0 & \mbox{ if } t\leq 0 \\
	t^2 & \mbox{ if } t\leq\frac{K+\varepsilon}{2} \\
	(K+\varepsilon)\left(t-\frac{K+\varepsilon}{2}\right) +\left(\frac{K+\varepsilon}{2}\right)^2  & \mbox{ if }  t>\frac{K+\varepsilon}{2}
	\end{array}
	\right.
$$
(recall that $K=\max_{y\in C}\|\nabla f (y)\|\leq \textrm{Lip}(\widetilde{f})$). Observe that $\theta_{\varepsilon}\in C^{1}(\R)$, $\textrm{Lip}(\theta_{\varepsilon})=K+\varepsilon$. Now set
$$
\Phi_{\varepsilon}(x)=\theta_{\varepsilon}\left(d(x, C)\right),
$$
where $d(x, C)$ stands for the distance from $x$ to $C$, notice that 
$\Phi_{\varepsilon}(x)=d(x,C)^2$ on an open neighborhood of $C$,
and
 define
$$
H_{\varepsilon}(x)=|f(x)-m(f)(x)|+ 2\Phi_{\varepsilon}(x).
$$ 
Note that $\textrm{Lip}(\Phi_{\varepsilon})=\textrm{Lip}(\theta_{\varepsilon})$ because $d(\cdot, C)$ is $1$-Lipschitz, and therefore
\begin{equation}
\textrm{Lip}(H_{\varepsilon})\leq \textrm{Lip}(\widetilde{f}) + K + 2(K+\varepsilon)\leq 4\, \textrm{Lip}(\widetilde{f}) +2\varepsilon.
\end{equation}
\begin{claim}
$H_{\varepsilon}$ is differentiable on $C$, with $\nabla H_{\varepsilon}(x_0)=0$ for every $x_0\in C$.
\end{claim}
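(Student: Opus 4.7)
The plan is to verify, for fixed $x_0\in C$, that each of the two summands defining $H_{\varepsilon}$ is itself differentiable at $x_0$ with vanishing gradient, and then to add these conclusions. Throughout, I would use that $f=m(f)$ on $C$ (established just before the claim) and that, by Lemma \ref{CW1 implies differentiability of m(f) on C}, $m(f)$ is differentiable at $x_0$ with $\nabla m(f)(x_0)=\nabla f(x_0)$.

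First I would handle the term $2\Phi_{\varepsilon}$. Since $\Phi_{\varepsilon}(x)=d(x,C)^2$ on an open neighborhood of $C$ and $x_0\in C$, for $x$ sufficiently close to $x_0$ one has
\[
0\le \Phi_{\varepsilon}(x)=d(x,C)^2\le |x-x_0|^2,
\]
because $x_0\in C$. Hence $\Phi_{\varepsilon}(x_0)=0$ and $\Phi_{\varepsilon}(x)=o(|x-x_0|)$, so $\Phi_{\varepsilon}$ is differentiable at $x_0$ with zero gradient.

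Next I would handle the term $|f-m(f)|$. Set $h:=f-m(f)$. Since $x_0\in C$, we have $h(x_0)=0$. By the lemma just proved, $m(f)$ is differentiable at $x_0$ with $\nabla m(f)(x_0)=\nabla f(x_0)=G(x_0)$, and by hypothesis $f\in C^1(\R^n)$, so $h$ is differentiable at $x_0$ with $h(x_0)=0$ and $\nabla h(x_0)=0$. Therefore
\[
h(x)=o(|x-x_0|)\quad\text{as }x\to x_0,
\]
which immediately gives $|h(x)|=o(|x-x_0|)$, i.e.\ $|f-m(f)|$ is differentiable at $x_0$ with value and gradient both equal to zero.

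Adding the two estimates yields $H_{\varepsilon}(x_0)=0$ and $H_{\varepsilon}(x)=o(|x-x_0|)$, which is precisely the statement that $H_{\varepsilon}$ is differentiable at $x_0$ with $\nabla H_{\varepsilon}(x_0)=0$. There is no real obstacle here: the whole argument rests on Lemma \ref{CW1 implies differentiability of m(f) on C} (which is where conditions $(C)$ and $(CW^1)$ were actually used), together with the elementary fact that the squared distance to a closed set vanishes to second order at points of the set.
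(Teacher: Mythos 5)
Your proof is correct and takes essentially the same route as the paper: both split $H_\varepsilon$ into the distance-squared term and the $|f-m(f)|$ term, show each vanishes to higher than first order at $x_0$, and rely on Lemma \ref{CW1 implies differentiability of m(f) on C} to conclude $\nabla f(x_0)=\nabla m(f)(x_0)$. The paper merely phrases the second step as a one-line ``easy exercise'' (that $|h_1-h_2|$ has null gradient wherever two differentiable functions share a gradient), which you have written out explicitly for the case at hand.
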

\begin{proof}
The function $d(\cdot, C)^{2}$ is obviously differentiable, with a null gradient, at $x_0$, hence we only have to see that $|f-m(f)|$ is differentiable, with a null  gradient, at $x_0$. Since $\nabla m(f)(x_0)=\nabla f(x_0)$ by Lemma \ref{CW1 implies differentiability of m(f) on C}, the Claim boils down to the following easy exercise: if two functions $h_1, h_2$ are differentiable at $x_0$, with $\nabla h_1(x_0)=\nabla h_2 (x_0)$, then $|h_1-h_2|$ is differentiable, with a null gradient, at $x_0$.
\end{proof}
Now, because $\Phi_{\varepsilon}$ is continuous and positive on $\R^n\setminus C$, using mollifiers and a partition of unity, one can construct a function $\varphi_{\varepsilon}\in C^{\infty}(\R^n\setminus C)$ such that
\begin{equation}
|\varphi_{\varepsilon}(x)-H_{\varepsilon}(x)|\leq \Phi_{\varepsilon}(x) \,\,\, \textrm{ for every } x\in \R^{n}\setminus C,
\end{equation}
and 
\begin{equation}
\textrm{Lip}(\varphi_{\varepsilon})\leq \textrm{Lip}(H_{\varepsilon})+\varepsilon
\end{equation}
(see for instance \cite[Proposition 2.1]{GW} for a proof in the more general setting of Riemannian manifolds, or \cite{AFLR} even for possibly infinite-dimensional Riemannian manifolds).
Let us define $\widetilde{\varphi}=\widetilde{\varphi_{\varepsilon}}:\R^n\to\R$ by
$$
\widetilde{\varphi}=  \left\lbrace
	\begin{array}{ccl}
	\varphi_{\varepsilon}(x) & \mbox{ if } x\in \R^n\setminus C \\
	0  & \mbox{ if }  x\in C.
	\end{array}
	\right.
$$
\begin{claim}
The function $\widetilde{\varphi}$ is differentiable on $\R^n$, and it satisfies $\nabla\widetilde{\varphi}(x_0)=0$ for every $x_0\in C$.
\end{claim}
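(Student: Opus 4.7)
The plan is to separate the problem into the two regions where $\widetilde\varphi$ is defined and then glue them at $\partial C$. On $\R^n\setminus C$, the function $\widetilde\varphi$ agrees with $\varphi_\varepsilon\in C^\infty(\R^n\setminus C)$, so it is automatically differentiable at every point outside $C$. The entire content of the claim therefore lies at points $x_0\in C$, where we must verify both that the derivative exists and that it vanishes.

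The key idea is a sandwiching argument. First observe that for any $x_0\in C$ we have $\widetilde\varphi(x_0)=0$, and also $H_\varepsilon(x_0)=|f(x_0)-m(f)(x_0)|+2\Phi_\varepsilon(x_0)=0$, since $f=m(f)$ on $C$ (as shown above) and $\Phi_\varepsilon$ vanishes on $C$. By the preceding Claim, $H_\varepsilon$ is differentiable at $x_0$ with $\nabla H_\varepsilon(x_0)=0$, so
$$
H_\varepsilon(x)=o(|x-x_0|) \quad \textrm{as } x\to x_0.
$$
Moreover, because $x_0\in C$ we have $d(x,C)\leq |x-x_0|$, and since $\Phi_\varepsilon(x)=d(x,C)^2$ on an open neighborhood of $C$, we get $\Phi_\varepsilon(x)\leq |x-x_0|^2=o(|x-x_0|)$ for $x$ sufficiently close to $x_0$.

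Now I would combine these with the defining estimate of $\varphi_\varepsilon$. For $x\in\R^n\setminus C$ close to $x_0$,
$$
|\widetilde\varphi(x)-\widetilde\varphi(x_0)| = |\varphi_\varepsilon(x)| \leq H_\varepsilon(x)+\Phi_\varepsilon(x) = o(|x-x_0|),
$$
while for $x\in C$ we trivially have $\widetilde\varphi(x)-\widetilde\varphi(x_0)=0$. Putting both cases together yields
$$
\lim_{x\to x_0}\frac{\widetilde\varphi(x)-\widetilde\varphi(x_0)}{|x-x_0|}=0,
$$
which shows that $\widetilde\varphi$ is differentiable at $x_0$ with $\nabla\widetilde\varphi(x_0)=0$, as required.

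There is no real obstacle here; the argument is essentially a squeezing principle. The only subtlety worth flagging is that $\Phi_\varepsilon$ equals $d(\cdot,C)^2$ only on a neighborhood of $C$, so one must restrict attention to $x$ sufficiently close to $x_0$ to invoke the quadratic bound—but this is harmless for a pointwise differentiability statement at $x_0$. All heavier work (the differentiability of $m(f)$ on $C$, the construction and Lipschitz control of $\varphi_\varepsilon$, and the differentiability of $H_\varepsilon$ on $C$) has already been done in the preceding lemmas and claims.
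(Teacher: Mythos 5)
Your argument is correct and takes essentially the same route as the paper: both proofs reduce to the squeeze $|\widetilde\varphi(x)|\leq H_\varepsilon(x)+\Phi_\varepsilon(x)=o(|x-x_0|)$, using the preceding claim that $H_\varepsilon$ has a null gradient on $C$ together with the quadratic bound $\Phi_\varepsilon(x)=d(x,C)^2\leq|x-x_0|^2$ near $C$. The only cosmetic difference is that the paper first dispatches $\operatorname{int}(C)$ trivially and then runs the squeeze only at $\partial C$, whereas you treat all $x_0\in C$ in one go by splitting the increment into the cases $x\in C$ and $x\notin C$; both are fine.
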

\begin{proof}
It is obvious that $\widetilde{\varphi}$ is differentiable on $\textrm{int}(C)\cup \left(\R^n\setminus C\right)$. We also have $\nabla\widetilde{\varphi}=0$ on $\textrm{int}(C)$, trivially. We only have to check that $\widetilde{\varphi}$ is differentiable, with a null gradient, on $\partial C$. If $x_0\in \partial C$ we have (recalling that $\Phi_{\varepsilon}(x)=d(x,C)^2$ on a neighborhood of $C$) that
$$
\frac{|\widetilde{\varphi}(x)-\widetilde{\varphi}(x_0)|}{|x-x_0|}=
\frac{|\widetilde{\varphi}(x)|}{|x-x_0|}\leq \frac{|H_{\varepsilon}(x)|+d(x, C)^2}{|x-x_0|}\to 0
$$
as $|x-x_0|\to 0^{+}$, because both $H_{\varepsilon}$ and $d(\cdot, C)^2$ vanish at $x_0$ and are differentiable, with null gradients, at $x_0$. Therefore $\widetilde{\varphi}$ is differentiable at $x_0$, with $\nabla \widetilde{\varphi}(x_0)=0$.
\end{proof}
Note also that
\begin{equation}\label{lip constant of varphi}
\textrm{Lip}(\widetilde{\varphi})=\textrm{Lip}(\varphi_{\varepsilon})\leq \textrm{Lip}(H_{\varepsilon})+\varepsilon\leq 4\, \textrm{Lip}(\widetilde{f})+3\varepsilon.
\end{equation}

Next we define 
\begin{equation}
g=g_{\varepsilon}:=f+\widetilde{\varphi}.
\end{equation}
The function $g$ is differentiable on $\R^n$, and coincides with $f$ on $C$. Moreover, we also have $\nabla g=\nabla f$ on $C$ (because $\nabla\widetilde{\varphi}=0$ on $C$).  And, for $x\in \R^n\setminus C$, we have
\begin{eqnarray*}
& &
g(x)\geq f(x)+H(x)-\Phi_{\varepsilon}(x)=f(x)+|f(x)-m(f)(x)|+\Phi_{\varepsilon}(x)\geq\\
& & m(f)(x) + \Phi_{\varepsilon}(x).
\end{eqnarray*}
This shows that $g\geq m(f)$. On the other hand, because $m(f)$ is $K$-Lipschitz, we have
$$
m(f)(x)\geq m(f)(0)-K|x|,
$$
and because $C$ is bounded, say $C\subset B(0,R)$ for some $R>0$, also
\begin{eqnarray*}
& & \Phi_{\varepsilon}(x)=(K+\varepsilon) d(x,C) - \frac{(K+\varepsilon)^2}{4} \\
& & \geq (K+\varepsilon) d(x, B(0,R)) - \frac{(K+\varepsilon)^2}{4} =(K+\varepsilon)\left(|x|-R-\frac{K+\varepsilon}{4}\right)
\end{eqnarray*}
for $|x|\geq R+ \frac{K+\varepsilon}{2}$.
Hence 
$$
g(x)\geq m(f)(x) + \Phi_{\varepsilon}(x)\geq m(f)(0)-K|x|+
(K+\varepsilon)\left(|x|-R-\frac{K+\varepsilon}{4}\right),
$$
for $|x|$ large enough, which implies 
\begin{equation}
\lim_{|x|\to\infty}g(x)=\infty.
\end{equation}
Also, notice that according to \eqref{lip constant of varphi} and the definition of $g$, we have
\begin{equation}
\textrm{Lip}(g)\leq \textrm{Lip}(\widetilde{f}) +\textrm{Lip}(\widetilde{\varphi})\leq 5\, \textrm{Lip}(\widetilde{f}) +3\varepsilon.
\end{equation}
Now we will use a differentiability property of the convex envelope of a function $\psi:\R^n\to\R$, defined by
$$
\textrm{conv}(\psi)(x)=\sup\{ h(x) \, : \, h \textrm{ is convex }, h\leq \psi\}
$$
(another expression for  $\textrm{conv}(\psi)$, which follows form Carath\'eodory's Theorem, is
$$
\textrm{conv}(\psi)(x)=\inf\left\lbrace \sum_{j=1}^{n+1}\lambda_{j} \psi(x_j) \, : \, \lambda_j\geq 0,
\sum_{j=1}^{n+1}\lambda_j =1, x=\sum_{j=1}^{n+1}\lambda_j x_j \right\rbrace,
$$
see \cite[Corollary 17.1.5]{Rockafellar} for instance). The following result is a restatement of a particular case of the main theorem in \cite{KirchheimKristensen}; see also \cite{GriewankRabier}.
\begin{theorem}[Kirchheim-Kristensen]
If $\psi:\R^n\to\R$ is differentiable and $\lim_{|x|\to\infty}\psi(x)=\infty$, then $\textrm{conv}(\psi)\in C^1(\R^n)$.
\end{theorem}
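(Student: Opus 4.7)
My plan is to prove that $\varphi:=\textrm{conv}(\psi)$ is differentiable at every point, after which the $C^1$ conclusion follows from the standard fact that a finite convex function on $\R^n$ which is differentiable everywhere is automatically of class $C^1$. Since a convex function is differentiable at $x$ if and only if its subdifferential $\partial\varphi(x)$ is a singleton, the entire task reduces to uniqueness of the subgradient. As a preliminary, I would observe that the coercivity of the continuous function $\psi$ yields, for every $A>0$, a constant $B_A$ with $\psi\geq A|\cdot|-B_A$ on $\R^n$; this affine minorant is inherited by $\varphi$, so $\varphi$ is finite, continuous, and coercive.

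Now fix $x\in\R^n$. Using the Carath\'eodory formula for $\varphi(x)$ together with the coercivity, I would extract from a minimizing sequence an \emph{attaining} representation $x=\sum_{j=1}^{n+1}\lambda_j x_j$ with $\lambda_j\geq 0$, $\sum_j\lambda_j=1$, and $\varphi(x)=\sum_j\lambda_j\psi(x_j)$, by a standard compactness argument; a straightforward refinement then shows that $\varphi(x_j)=\psi(x_j)$ whenever $\lambda_j>0$. The heart of the proof is the following observation: if $\xi\in\partial\varphi(x)$ and $\ell(y):=\varphi(x)+\langle\xi,y-x\rangle$, then $\ell\leq\varphi\leq\psi$ on all of $\R^n$, while
\[
\ell(x)=\varphi(x)=\sum_{j}\lambda_j\psi(x_j)=\sum_{j}\lambda_j\ell(x_j),
\]
so $\sum_j\lambda_j(\psi(x_j)-\ell(x_j))=0$ with every summand nonnegative. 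Hence $\psi(x_j)=\ell(x_j)$ for each $j$ with $\lambda_j>0$, so the function $\psi-\ell\geq 0$ attains its minimum at $x_j$, and the differentiability of $\psi$ at $x_j$ forces $\xi=\nabla\psi(x_j)$. In particular, $\partial\varphi(x)$ reduces to the single value $\nabla\psi(x_j)$ (which must therefore coincide for all $j$ with $\lambda_j>0$), $\varphi$ is differentiable at $x$, and $\nabla\varphi(x)=\nabla\psi(x_j)$.

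The main obstacle I expect is the attainment step in the Carath\'eodory argument: one must rule out the degenerate scenario where some weight $\lambda_j^{(k)}\to 0$ along a minimizing sequence while $|x_j^{(k)}|\to\infty$ fast enough to keep $\lambda_j^{(k)}\psi(x_j^{(k)})$ bounded away from $0$. This is exactly where the strong form of coercivity (arbitrarily large slopes in the affine minorants of $\psi$) is needed, rather than mere boundedness below: combined with the boundedness of $\sum_j\lambda_j^{(k)}\psi(x_j^{(k)})$, the bound $\psi\geq A|\cdot|-B_A$ for arbitrary $A$ forces $\lambda_j^{(k)}|x_j^{(k)}|\to 0$ for every degenerate index, so those terms can be discarded and a genuine attaining representation is produced in the limit. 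Once attainment is secured, the rest of the argument is the short convexity computation above.
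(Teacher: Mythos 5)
Your overall strategy---reduce $C^1$ regularity to differentiability everywhere, then to uniqueness of the subgradient, then deduce uniqueness from an \emph{attaining} Carath\'eodory representation---is reasonable, and the final step (a touching point $x_j$ of $\psi-\ell$ forces $\xi=\nabla\psi(x_j)$) is the right idea. The paper itself does not reprove this result, so there is no internal proof to compare against; it simply cites Kirchheim--Kristensen and Griewank--Rabier. However, your argument has a genuine gap at the preliminary coercivity step. The hypothesis $\lim_{|x|\to\infty}\psi(x)=\infty$ does \emph{not} give $\psi\geq A|\cdot|-B_A$ for every $A>0$; that family of estimates is equivalent to superlinear growth of $\psi$. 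A coercive function need not have any linear growth: for instance, $\psi(x)=\log(1+|x|^2)+1$ is $C^\infty$ and tends to $+\infty$, but its only affine minorants are the constants $\leq 1$, so no inequality $\psi\geq A|\cdot|-B_A$ holds for any $A>0$. Your resolution of the degenerate Carath\'eodory scenario (letting $A\to\infty$ to force $\lambda_j^{(k)}|x_j^{(k)}|\to 0$) therefore has no basis.

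Moreover, the attainment you need can genuinely fail under the stated hypotheses, not just the argument for it. For the example above, $\textrm{conv}(\psi)\equiv 1$, and for any $x\neq 0$ the infimum $\inf\{\sum_j\lambda_j\psi(x_j):\lambda_j\geq 0,\ \sum_j\lambda_j=1,\ \sum_j\lambda_j x_j=x\}$ equals $1$ but is never attained: since $\psi\geq 1$ with equality only at $0$, an attaining representation would force $x_j=0$ whenever $\lambda_j>0$, hence $x=0$, a contradiction. (A minimizing sequence is $\lambda_1=1-\epsilon$, $x_1=0$, $\lambda_2=\epsilon$, $x_2=x/\epsilon$, and along it $\lambda_2 x_2\equiv x$ does \emph{not} tend to $0$.) So the proof cannot be repaired merely by substituting a weaker growth estimate; one must either avoid the Carath\'eodory attainment argument or treat the non-attaining, recession-direction scenario separately, as is done in the cited references.
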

\noindent Although not explicitly stated in that paper, the proof of \cite{KirchheimKristensen} also shows that
$$
\textrm{Lip}\left(\textrm{conv}(\psi)\right)\leq\textrm{Lip}(\psi).
$$
If we define $F=\textrm{conv}(g)$ we thus get that $F$ is convex on $\R^n$ and $F\in C^{1}(\R^n)$, with 
\begin{equation}\label{lispchitz constant of F}
\textrm{Lip}(F)\leq\textrm{Lip}(g)\leq 5\textrm{Lip}(\widetilde{f})+3\varepsilon
\leq 5 \, k(n)\, \sup_{y\in C}|G(y)| +3\varepsilon.
\end{equation}
Let us now check that $F=f$ on $C$. Since $m(f)$ is convex on $\R^n$ and $m(f)\leq g$, we have that $m(f)\leq F$ on $\R^n$ by definition of $\textrm{conv}(g)$. On the other hand, since $g=f$ on $C$ we have, for every convex function $h$ with $h\leq g$, that $h\leq f$ on $C$, and therefore, for every $y\in C$,
$$
F(y)=\sup\{ h(y) \, : \, h \textrm{ is convex }, h\leq g\}\leq f(y)=m(f)(y).
$$
This shows that $F(y)=f(y)$ for every $y\in C$.

Next let us see that we also have $\nabla F(y)=\nabla f(y)$ for every $y\in C$. In order to do so we use the following well known criterion for differentiability of convex functions, whose proof is straightforward and can be left to the interested reader.
\begin{lemma}\label{differentiability criterion for convex functions}
If $\phi$ is convex, $\psi$ is differentiable at $y$, $\phi\leq \psi$, and $\phi(y)=\psi(y)$, then $\phi$ is differentiable at $y$, with $\nabla\phi(y)=\nabla\psi(y)$.
\end{lemma}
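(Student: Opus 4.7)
The plan is to prove this by showing that the subdifferential $\partial\phi(y)$ reduces to the singleton $\{\nabla\psi(y)\}$, which is the classical characterization of differentiability for a convex function at an interior point of its (effective) domain.

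First, I would observe that the hypotheses $\phi\leq\psi$ and $\phi(y)=\psi(y)$ say that the nonnegative function $\psi-\phi$ attains a global minimum at $y$. In the setting of the paper, $\phi$ is real-valued on an open neighborhood of $y$ in $\R^n$, so $\partial\phi(y)$ is non-empty by standard finite-dimensional convex analysis.

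Next, I would pick an arbitrary $v\in\partial\phi(y)$ and combine the subgradient inequality
$$
\phi(y+h)\geq \phi(y)+\langle v, h\rangle, \qquad h\in\R^n,
$$
with the upper bound coming from the differentiability of $\psi$ at $y$,
$$
\phi(y+h)\leq \psi(y+h)= \phi(y)+\langle \nabla\psi(y), h\rangle + o(|h|),
$$
to obtain $\langle v-\nabla\psi(y), h\rangle \leq o(|h|)$ as $h\to 0$. Substituting $h=t\bigl(v-\nabla\psi(y)\bigr)$ with $t>0$ and letting $t\to 0^{+}$ then forces $v=\nabla\psi(y)$. Hence $\partial\phi(y)=\{\nabla\psi(y)\}$, which delivers differentiability of $\phi$ at $y$ with $\nabla\phi(y)=\nabla\psi(y)$.

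I expect no genuine obstacle: the whole argument is elementary convex analysis. The only minor point to be careful about is the non-emptiness of the subdifferential, which here is automatic because $\phi$ is assumed to be a (real-valued) convex function, applied in contexts where $y$ lies in the interior of its domain.
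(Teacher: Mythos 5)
Your proof is correct and is exactly the standard argument the paper has in mind: the paper omits the proof as ``straightforward'' and merely hints at it by remarking that the lemma is equivalent to ``a convex function is differentiable at $y$ iff it is superdifferentiable at $y$.'' Your subdifferential argument (showing $\partial\phi(y)$ collapses to $\{\nabla\psi(y)\}$ by testing against $h=t(v-\nabla\psi(y))$, then invoking the singleton-subdifferential criterion for differentiability of finite convex functions) is precisely the natural way to make that hint rigorous.
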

\noindent (This fact can also be phrased as: a convex function $\phi$ is differentiable at $y$ if and only if $\phi$ is superdifferentiable at $y$.)

Since we know that $m(f)\leq F$, $m(f)(y)=f(y)=F(y)$ for all $y\in C$, and $F\in C^{1}(\R^n)$, it follows from this criterion (by taking $\phi=m(f)$ and $\psi=F$), and from Lemma \ref{CW1 implies differentiability of m(f) on C}, that
$$
G(y)=\nabla f(y)=\nabla m(f)(y)=\nabla F(y) \, \textrm{ for all } y\in C.
$$
Finally, note that, equation \eqref{lispchitz constant of F} implies (by assuming that $\varepsilon\leq k(n) \|G\|_{\infty}/3$, which we may do) that
\begin{equation}
\textrm{Lip}(F)\leq 6\, k(n) \sup_{y\in C}|G(y)|
\end{equation}
and also, assuming $0\in C$, that
\begin{equation}
\|F\|_{1}\leq 6 \, k(n) \,
\inf\{\|\varphi\|_{1} : \varphi\in C^{1}(\R^n), \varphi_{|_C}=f, (\nabla \varphi)_{|_C}=G\}.
\end{equation}
The proof of Theorem \ref{main theorem C1 for nonconvex compacta} is complete.

\subsection{Proof of Theorem \ref{corollary for C1 convex bodies}.}
\noindent $(2)\implies (1)$: Set $C=K\cup\{0\}$, choose a number $\alpha>0$ sufficiently close to $1$ so that
$$
0<1-\alpha <\min_{y\in K}\langle N(y), y\rangle
$$
(this is possible thanks to condition $(\mathcal{O})$, continuity of $N$ and compactness of $K$; notice in particular that $0\notin K$),
and define $f:C\to \R$ and $G:C\to\R^n$ by
$$
 f(y)   =  \left\lbrace
	\begin{array}{ccl}
	1 & \mbox{ if } y\in K \\
	\alpha  & \mbox{ if }  y=0,
	\end{array}
	\right. \textrm{ and }
 G(y)   =  \left\lbrace
	\begin{array}{ccl}
	N(y) & \mbox{ if } y\in K \\
	0  & \mbox{ if }  y= 0.
	\end{array}
	\right.
$$
By using conditions $(\mathcal{K})$ and $(\mathcal{K}\mathcal{W}^1)$,
it is straightforward to check that $f$ and $G$ satisfy conditions $(C)$ and $(CW^1)$. Therefore, according to Theorem \ref{main theorem C1 for nonconvex compacta}, there exists a convex function $F\in C^1(\R^n)$ such that $F=f$ and $\nabla F=G$ on $C$. Moreover, from the proof of Theorem \ref{main theorem C1 for nonconvex compacta}, it is clear that $F$ can be taken so as to satisfy $\lim_{|x|\to\infty}F(x)=\infty$. If we define
$V=\{x\in\R^n \, : \, F(x)\leq 1\}$ we then have that $V$ is a compact convex body with $0\in\textrm{int}(V)$ (because $F(0)=\alpha<1$), and $\nabla F(y)=N(y)$ is outwardly normal to
$\{x\in\R^n \, : \, F(x)=1\}=\partial V$ at each $y\in K$. Moreover, $F(y)=f(y)=1$ for each $y\in K$, hence $K\subseteq \partial V$.

\medskip

\noindent $(1)\implies (2)$: Let $\mu_C$ be the Minkowski functional of $C$. By composing $\mu_C$ with a $C^1$ convex function $\phi:\R\to\R$ such that $\phi(t)=|t|$ if and only if $|t|\geq 1/2$, we obtain a function $F(x):=\phi(\mu_{C}(x))$ which is of class $C^1$ and convex on $\R^n$ and coincides with $\mu_C$ on a neighborhood of $\partial C$. By Theorem \ref{main theorem C1 for nonconvex compacta} we then have that $F$ satisfies conditions $(C)$ and $(CW^1)$ on $\partial C$. On the other hand $\nabla\mu_C(y)$ is outwardly normal to $\partial C$ at every $y\in\partial C$ and $\nabla F=\nabla \mu_C$ on $\partial C$, and hence we have $\nabla F(y)=N(y)$ for every $y\in \partial C$. These facts, together with the assumption $K\subseteq \partial V$, are easily checked to imply that $N$ and $K$ satisfy conditions $(\mathcal{K})$ and $(\mathcal{K}\mathcal{W}^1)$. Finally, because $0\in \textrm{int}(C)$ and $\nabla \mu_{C}(x)$ is outwardly normal to $\partial C$ for every $x\in\partial C$, we have that $\langle \nabla\mu_C(x), x\rangle >0$ for every $x\in\partial C$, and in particular condition $(\mathcal{O})$ is satisfied as well. $\Box$

\medskip

Let us conclude this section with a couple of examples. We first observe that $m(f)$ need not be differentiable outside $C$, even in the case when $C$ is a convex body and $f$ is $C^{\infty}$ on $C$.
\begin{example}\label{nonsmoothness of m(f)}
{\em Let $g$ be the function $g(x,y)=\max\{x+y-1, -x+y-1, \frac{1}{3}y\}$. Using for instance the smooth maxima introduced in \cite{Azagra}, one can smooth away the edges of the graph of $g$ produced by the intersection of the plane $z=\frac{1}{3}y$ with the planes $z=y \pm x-1$, thus obtaining a smooth convex function $f$ defined on $C:=g^{-1}(-\infty, 0]\cap\{(x,y): y\geq -1\}$.
However, $m(f)$ will not be everywhere differentiable, because for $y\geq 2$ we have $m(f)(x,y)=\max\{x+y-1, -x+y-1\}$, and this max function is not smooth on the line $x=0$. We leave the details to the interested reader.}
\end{example}

The following example shows that when $C$ has empty interior there are convex functions $f:C\to\R$ and continuous mappings $G:C\to\R^n$ which satisfy $(W^1)$ but do not satisfy $(CW^1)$.
\begin{example}
{\em Let $C$ be the segment $\{0\}\times [0,1]$ in $\R^2$, and $f$, $G$ be defined by $f(0,y)=0$ and $G(0,y)=(y,0)$. If we define $\widetilde{f}(x,y)=xy$ then it is clear that $\widetilde{f}$ is a $C^{1}$ extension of $f$ to $\R^2$ which satisfies $\nabla \widetilde{f}(0,y)=G(0,y)$ for $(0,y)\in C$. Therefore the pair $f,G$ satisfies Whitney's extension condition $(W^1)$. However, since $f$ is constant on the segment $C$ and $G(0,1)=(1,0)\neq (0,0)=G(0,0)$, it is clear that the pair $f, G$ does not satisfy $(CW^1)$. In particular $f$ does not have any convex $C^1$ extension $F$ to $\R^n$ with $\nabla F=G$ on $C$.}
\end{example}

\medskip

\section{Proofs of the $C^{1, \omega}$ results}

\subsection{Necessity}
Let us prove the necessity of condition $(CW^{1, \omega})$ in Theorem \ref{C1omega convex extension}.
We will use the following.
\begin{lemma}\label{h must be negative somewhere}
Let $\omega:[0,\infty)\to [0, \infty)$ be a modulus of continuity, let $a,b, \eta$ be real numbers with $a>0$, $\eta\in (0, \frac{1}{2}]$, and define $h:[0, \infty)\to\R$ by
$$
h(s)=-a s + b +\omega(s) s.
$$
Assume that $b<\eta a \, \omega^{-1}(\eta a)$. Then we have
$$
h\left( \omega^{-1}\left(\eta a\right)\right)<0.
$$
\end{lemma}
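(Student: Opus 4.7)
The plan is to evaluate $h$ directly at the point $s_0:=\omega^{-1}(\eta a)$ and show the resulting expression is negative using both the hypothesis on $b$ and the constraint $\eta\leq 1/2$.

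First I would write out
\[
h(s_0) \;=\; -a\,\omega^{-1}(\eta a) \;+\; b \;+\; \omega\bigl(\omega^{-1}(\eta a)\bigr)\,\omega^{-1}(\eta a),
\]
and simplify the last term using the identity $\omega(\omega^{-1}(t))=t$ (which holds on the range where $\omega^{-1}$ is defined; note $\eta a$ lies in this range since $\omega^{-1}(\eta a)$ is hypothesized to exist). This collapses the expression to
\[
h(s_0) \;=\; b \;-\; (1-\eta)\,a\,\omega^{-1}(\eta a).
\]

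The next step is to compare $(1-\eta)\,a\,\omega^{-1}(\eta a)$ with $b$. Since $\eta\in(0,1/2]$, we have $1-\eta\geq 1/2 \geq \eta$, hence
\[
(1-\eta)\,a\,\omega^{-1}(\eta a) \;\geq\; \eta\,a\,\omega^{-1}(\eta a) \;>\; b,
\]
where the last strict inequality is exactly the hypothesis $b < \eta a\,\omega^{-1}(\eta a)$. Combining this with the simplified formula for $h(s_0)$ yields $h(s_0)<0$, which is the desired conclusion.

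There is no real obstacle here; the only thing to check carefully is that $\omega^{-1}(\eta a)$ makes sense, i.e.\ that $\eta a$ belongs to the domain $[0,\beta)$ of $\omega^{-1}$. This is implicit in the statement (otherwise the conclusion would be vacuous), and under this tacit assumption the proof is the one-line computation above.
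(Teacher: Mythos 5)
Your proof is correct and is essentially the same as the paper's: direct evaluation of $h$ at $s_0=\omega^{-1}(\eta a)$, collapsing $\omega(\omega^{-1}(\eta a))$ to $\eta a$, and then using $\eta\le 1/2$ together with the hypothesis $b<\eta a\,\omega^{-1}(\eta a)$. The paper merely groups the terms as $-a(1-2\eta)\omega^{-1}(\eta a)+\bigl(b-\eta a\,\omega^{-1}(\eta a)\bigr)$ so that each bracket is visibly nonpositive (respectively negative), which is an algebraic rearrangement of your comparison $(1-\eta)\ge\eta$.
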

\begin{proof}
We can write
$$
h\left( \omega^{-1}\left(\eta a\right)\right)=-a(1-2\eta)\omega^{-1}(\eta a) + b - \eta a\,\omega^{-1}(\eta a),
$$
and the result follows at once.
\end{proof}

\begin{proposition}\label{necessity}
Let $f\in C^{1, \omega}(\Rn)$ be convex and not affine. Then
$$
f(x)-f(y)-\langle \nabla f(y), x-y\rangle \geq \frac{1}{2} |\nabla f(x)-\nabla f(y)|\, \omega^{-1}\left( \frac{1}{2M} |\nabla f(x)-\nabla f(y)| \right)
$$
for all $x, y\in \R^n$, where 
$$
M=M(\nabla f, \Rn)=\sup_{x, y\in\Rn, \, x\neq y}\frac{|\nabla f(x)-\nabla f(y)|}{\omega\left(|x-y|\right)}.
$$
In particular, if $f$ is convex then the pair $(f, \nabla f)$ satisfies $(CW^{1, \omega})$, with $\eta=1/2$, on every subset $C\subset\Rn$. 
\end{proposition}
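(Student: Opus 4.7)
My plan is to reduce the inequality to a one-dimensional estimate along a line emanating from $x$ in the direction of $\nabla f(x) - \nabla f(y)$. Fix $x, y \in \mathbb{R}^n$, and introduce the auxiliary function
$$
g(z) := f(z) - f(y) - \langle \nabla f(y), z - y \rangle.
$$
Since $f$ is convex we have $g \geq 0$ on $\mathbb{R}^n$, and clearly $g(y)=0$, $\nabla g(z) = \nabla f(z) - \nabla f(y)$, so $g$ is of class $C^{1,\omega}$ with $M(\nabla g, \mathbb{R}^n) = M$. In these terms the inequality to be proved becomes
$$
g(x) \;\geq\; \tfrac{1}{2}\,|\nabla g(x)|\,\omega^{-1}\!\left(\tfrac{1}{2M}|\nabla g(x)|\right).
$$
If $\nabla g(x)=0$ the right-hand side vanishes and there is nothing to prove; otherwise set $u := \nabla g(x)/|\nabla g(x)|$.

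Now I would apply the supporting hyperplane inequality of convex $g$ at the point $x - tu$, for $t>0$ to be chosen later:
$$
g(x) \;\geq\; g(x - tu) + t\,\langle \nabla g(x - tu), u\rangle.
$$
The $\omega$-continuity of $\nabla g$ together with Cauchy--Schwarz gives
$$
\langle \nabla g(x - tu), u\rangle \;\geq\; |\nabla g(x)| - |\nabla g(x) - \nabla g(x - tu)| \;\geq\; |\nabla g(x)| - M\omega(t).
$$
Combining these two bounds and using $g(x-tu)\geq 0$ yields, for every admissible $t>0$,
$$
g(x) \;\geq\; t\bigl(|\nabla g(x)| - M\omega(t)\bigr).
$$

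The final step is to choose $t$ to balance the two terms: pick $t := \omega^{-1}\!\bigl(|\nabla g(x)|/(2M)\bigr)$, so that $M\omega(t) = \tfrac{1}{2}|\nabla g(x)|$, making the bound equal to $\tfrac{1}{2}\,t\,|\nabla g(x)|$, which is exactly what is wanted. One only has to confirm that $\omega^{-1}$ is defined at $|\nabla g(x)|/(2M)$: from $|\nabla g(x)| = |\nabla f(x) - \nabla f(y)| \leq M\omega(|x-y|)$ we get $|\nabla g(x)|/(2M) \leq \omega(|x-y|)/2 < \beta$, so the inverse is available. For the last assertion, on any $C\subset \mathbb{R}^n$ we have $M(\nabla f, C) \leq M$, and since $\omega^{-1}$ is increasing the inequality just proved implies $(CW^{1,\omega})$ on $C$ with $\eta = 1/2$. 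I do not foresee a genuine obstacle; the only small technical point is verifying that the chosen $t$ lies in the domain of $\omega^{-1}$, which is automatic from the definition of $M$.
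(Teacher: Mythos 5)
Your proof is correct and follows essentially the same route as the paper's: both reduce to the shifted function $g(z)=f(z)-f(y)-\langle\nabla f(y),z-y\rangle$ and then study $g$ along the ray from $x$ in the direction $-\nabla g(x)/|\nabla g(x)|$, combining $g\geq 0$ with the $\omega$-modulus bound on $\nabla g$ to obtain $g(x)\geq t\left(|\nabla g(x)|-M\omega(t)\right)$ for all $t>0$, optimized at $t=\omega^{-1}\left(|\nabla g(x)|/(2M)\right)$. The only differences are presentational: you argue directly via the supporting-hyperplane inequality at $x-tu$, whereas the paper first normalizes $M=1$, encapsulates the choice of $t$ in the auxiliary Lemma about $h(s)=-as+b+\omega(s)s$, and reaches the conclusion by contradiction.
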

\begin{proof}
Suppose that there exist different points $x, y\in\Rn$ such that
$$
f(x)-f(y)-\langle \nabla f(y), x-y\rangle < \frac{1}{2} |\nabla f(x)-\nabla f(y)|\, \omega^{-1}\left( \frac{1}{2M} |\nabla f(x)-\nabla f(y)| \right),
$$
and we will get a contradiction.

\noindent {\bf Case 1.} Assume further that $M=1$, $f(y)=0$, and $\nabla f(y)=0$.
By convexity this implies $f(x)\geq 0$.
Then we have 
$$
0\leq f(x)<\frac{1}{2}|\nabla f(x)|\, \omega^{-1}\left(\frac{1}{2}|\nabla f(x)|\right).
$$
Call $a=|\nabla f(x)|>0$, $b=f(x)$, set 
$$
v=-\frac{1}{|\nabla f(x)|}\nabla f(x),
$$
and define 
$$
\varphi(t)=f(x+tv)
$$
for every $t\in\R$. We have $\varphi(0)=b$, $\varphi'(0)=-a$, and $\omega$ is a modulus of continuity of the derivative $\varphi'$. This implies that 
$$
|\varphi(t)-b+at|\leq t\omega(t)
$$
for every $t\in\R^{+}$, hence also that 
$$
\varphi(t)\leq h(t) \textrm{ for all } t\in \R^{+},
$$
where $h(t)=-at+b+t\omega(t)$. By assumption, 
$$
b<\frac{1}{2} a \omega^{-1}\left(\frac{1}{2} a\right),
$$
and then Lemma \ref{h must be negative somewhere} implies that
$$
f\left( x+\omega^{-1}\left(\frac{1}{2} a\right) v\right)=\varphi\left(\omega^{-1}\left(\frac{1}{2} a\right)\right)\leq
h\left(\omega^{-1}\left(\frac{1}{2} a\right)\right)<0,
$$
which is in contradiction with the assumptions that $f$ is convex, $f(y)=0$, and $\nabla f(y)=0$. This shows that
$$
f(x)\geq \frac{1}{2}|\nabla f(x)|\, \omega^{-1}\left(\frac{1}{2}|\nabla f(x)|\right).
$$

\noindent {\bf Case 2.} Assume only that $M=1$. Define
$$
g(z)=f(z)-f(y)-\langle \nabla f(y), z-y\rangle
$$
for every $z\in\Rn$. Then $g(y)=0$ and $\nabla g(y)=0$. By Case 1, we get
$$
g(x)\geq \frac{1}{2}|\nabla g(x)|\, \omega^{-1}\left(\frac{1}{2}|\nabla g(x)|\right),
$$
and since $\nabla g(x)=\nabla f(x)-\nabla f(y)$ the Proposition is thus proved in the case when $M=1$.

\noindent {\bf Case 3.} In the general case, we may assume $M>0$ (the result is trivial for $M=0$). Consider $\psi=\frac{1}{M}f$, which satisfies the assumption of Case 2. Therefore
$$
\psi(x)-\psi(y)-\langle \nabla \psi(y), x-y\rangle \geq \frac{1}{2} |\nabla \psi(x)-\nabla \psi(y)|\, \omega^{-1}\left( \frac{1}{2} |\nabla \psi(x)-\nabla \psi(y)| \right),
$$
which is equivalent to the desired inequality.
\end{proof}

\subsection{Sufficiency} Conversely, let us now show that condition $(CW^{1, \omega})$ is sufficient in Theorem \ref{C1omega convex extension}. In the rest of this section $C$ will be an arbitrary subset of $\Rn$, and for $f:C\to\R$ and $G:C\to\R^n$ satisfying $(CW^{1, \omega})$ with $\eta=1/2$,
we will denote 
$$
m_{C}(f,G)(x)=\sup_{y\in C}\{f(y)+\langle  G(y), x-y\rangle \}
$$
for every $x\in\Rn$.

\begin{lemma}\label{existenceminimal}
Under the above assumptions $m_C(f,G)(x)$ is finite for every $x\in \Rn.$ 
\end{lemma}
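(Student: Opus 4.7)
The plan is to bound $m_{C}(f,G)(x)$ by fixing an arbitrary base point $y_{0}\in C$ and controlling each competitor $f(y)+\langle G(y),x-y\rangle$ in terms of a constant (depending only on $y_{0}$ and $x$) plus a function of $t:=|G(y)-G(y_{0})|$ alone. The key move is to apply $(CW^{1,\omega})$ with $\eta=1/2$ but with the roles of $x$ and $y$ swapped (i.e., with $y_{0}$ in the role of the first variable and $y$ in the role of the second). This yields
$$
f(y_{0})-f(y)-\langle G(y),y_{0}-y\rangle\;\geq\;\tfrac{1}{2}\,t\,\omega^{-1}\!\left(\tfrac{t}{2M}\right),
$$
which rearranges to an upper bound on $f(y)$.

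Adding $\langle G(y),x-y\rangle$ to both sides and using $\langle G(y),y-y_{0}\rangle+\langle G(y),x-y\rangle=\langle G(y),x-y_{0}\rangle$, I would then write
$$
f(y)+\langle G(y),x-y\rangle\;\leq\;f(y_{0})+\langle G(y_{0}),x-y_{0}\rangle+\langle G(y)-G(y_{0}),x-y_{0}\rangle-\tfrac{1}{2}\,t\,\omega^{-1}\!\left(\tfrac{t}{2M}\right).
$$
The first two terms on the right are a $y$-independent constant, and by Cauchy--Schwarz the third is at most $t|x-y_{0}|$. Hence the whole problem reduces to showing that the one-variable function $\Psi(t):=t|x-y_{0}|-\tfrac{1}{2}t\,\omega^{-1}(t/(2M))$ is bounded above over the range of values $t=|G(y)-G(y_{0})|$ can take as $y$ varies in $C$.

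This last step is a short dichotomy on whether $\omega$ is bounded. If $\omega$ is unbounded, then $\omega^{-1}$ is defined on $[0,\infty)$ and tends to $+\infty$, so once $t$ is large enough that $\omega^{-1}(t/(2M))\geq 3|x-y_{0}|$ we have $\Psi(t)<0$; therefore $\sup\Psi$ is attained on a compact subinterval of $[0,\infty)$ and is finite. If $\omega$ is bounded by $\beta$, then $\omega$-continuity of $G$ (with constant $M$, guaranteed by the hypothesis $M=M(G,C)$) forces $t\leq M\omega(|y-y_{0}|)<M\beta$, so $t/(2M)<\beta/2$ stays safely inside the domain $[0,\beta)$ of $\omega^{-1}$; thus $\Psi$ is continuous on a bounded interval and is bounded above. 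I expect the only point that needs a moment of care is precisely this domain issue for $\omega^{-1}$ in the bounded-$\omega$ case, but once it is noted the whole argument is a rearrangement of $(CW^{1,\omega})$ plus Cauchy--Schwarz.
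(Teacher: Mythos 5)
Your proposal is correct and follows essentially the same route as the paper's proof: apply $(CW^{1,\omega})$ with a fixed reference point in $C$ playing the role of the first variable, rearrange to isolate the competitor $f(y)+\langle G(y),x-y\rangle$, bound the cross term by Cauchy--Schwarz, and then run the bounded/unbounded-$\omega$ dichotomy on the resulting one-variable expression in $t=|G(y)-G(y_0)|$. The only cosmetic difference is that the paper takes the base point $z$ near $x$ (with $|x-z|\leq 2\,d(x,C)$), which is irrelevant for this finiteness statement, whereas you correctly observe that any fixed $y_0\in C$ works just as well; your remark that the bounded-$\omega$ case requires $t/(2M)$ to lie in the domain of $\omega^{-1}$ is a sound point of care, handled in the paper by noting that $G$ is then bounded.
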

\begin{proof}
Given $x\in \Rn$ we take a point $z\in C$ for which $|x-z| \leq 2 d(x,C).$ Making use of condition $(CW^{1, \omega})$ we obtain, for every $y\in C,$ that
\begin{align*}
& f(z)+ \langle G(z), x-z \rangle - \left( f(y)+ \langle G(y), x-y\rangle \right) \\
& = f(z)-f(y)- \langle G(y), z-y \rangle + \langle G(z)-G(y), x- z \rangle \\
& \geq \frac{1}{2} |G(z)-G(y)| \omega^{-1}\left( \frac{1}{2M} | G(z)- G(y)| \right) - 2|G(z)-G(y)|d(x,C).
\end{align*}
This leads us to the inequality
\begin{align*}
f(y)+ & \langle G(y), x-y\rangle   \leq f(z)+ \langle G(z), x-z \rangle \\
& + |G(z)-G(y)| \left( 2 d(x,C)- \frac{1}{2}\omega^{-1}\left( \frac{1}{2M} | G(z)- G(y)| \right)  \right).
\end{align*}
The first term in the last sum does not depend on $y.$ In the case that $\omega$ is bounded, we also have that $G$ is bounded, and this implies that the second term is also bounded by a constant only dependent on $x$ and $z$. On the other hand, if $\omega$ is unbounded, then $\omega^{-1}$ is a nonconstant convex function defined on $(0,+\infty)$, hence $\lim_{t \to +\infty} \omega^{-1}(t) = +\infty$. This implies that the second term in the above sum is bounded above by a constant only dependent on $x$ and $z$ in either case. Because $y$ is arbitrary on $C,$ we have shown that $m_C(f,G)(x)$ is finite. 
\end{proof}

\begin{lemma}\label{inequalitiapproximationminimal}
If $x\in \Rn, \: x_0,y \in C$ are such that
$$
f(x_0) + \langle G(x_0), x-x_0 \rangle \leq f(y)+ \langle G(y), x-y \rangle ,
$$
then
$|G(y)-G(x_0)| \leq 4M \omega(|x-x_0|).$
\end{lemma}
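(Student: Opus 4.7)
The plan is to rewrite the hypothesis into a form where the left-hand side of $(CW^{1,\omega})$ appears explicitly, and then use that condition (with $\eta=1/2$) to extract the desired bound on $|G(y)-G(x_0)|$.

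First I would split $\langle G(y), x-y\rangle = \langle G(y), x-x_0\rangle + \langle G(y), x_0 - y\rangle$ in the hypothesis; after moving terms around this yields
\[
f(x_0) - f(y) - \langle G(y), x_0-y\rangle \;\leq\; \langle G(y)-G(x_0),\, x-x_0\rangle \;\leq\; |G(y)-G(x_0)|\,|x-x_0|,
\]
by Cauchy--Schwarz. If $G(x_0)=G(y)$ there is nothing to prove, so assume the difference is nonzero.

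Next, I would apply $(CW^{1,\omega})$ with $\eta=1/2$ (the standing hypothesis of Section 3.2) to the left-hand side, giving
\[
\tfrac{1}{2}|G(x_0)-G(y)|\,\omega^{-1}\!\left(\tfrac{1}{2M}|G(x_0)-G(y)|\right) \;\leq\; |G(y)-G(x_0)|\,|x-x_0|.
\]
Dividing through by $|G(y)-G(x_0)|>0$ produces
\[
\omega^{-1}\!\left(\tfrac{1}{2M}|G(x_0)-G(y)|\right) \;\leq\; 2|x-x_0|,
\]
and applying the increasing function $\omega$ to both sides followed by the inequality $\omega(2t)\leq 2\omega(t)$ (which holds because $\omega(\lambda t)\leq \lambda\omega(t)$ for $\lambda\geq 1$, as recalled in the introduction) delivers $\tfrac{1}{2M}|G(x_0)-G(y)|\leq 2\omega(|x-x_0|)$, i.e.\ the claim.

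There is no real obstacle here: the lemma is essentially a one-line computation, and the only subtle point is remembering to invoke the sub-linearity $\omega(2t)\leq 2\omega(t)$ at the end rather than trying to work directly with $\omega^{-1}$. The constant $4$ in the conclusion is exactly the product of the factor $2$ lost when dividing by $\tfrac{1}{2}$ in $(CW^{1,\omega})$ and the factor $2$ lost from this sub-linearity estimate.
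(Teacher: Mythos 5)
Your proposal is correct and follows essentially the same route as the paper: rearrange the hypothesis to bound $f(x_0)-f(y)-\langle G(y),x_0-y\rangle$ above by $|G(y)-G(x_0)||x-x_0|$, apply $(CW^{1,\omega})$ from below, cancel $|G(y)-G(x_0)|$, and finish with monotonicity and sub-linearity of $\omega$. The only cosmetic difference is that you spell out the trivial case $G(x_0)=G(y)$ and the final $\omega(2t)\leq 2\omega(t)$ step, which the paper leaves implicit.
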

\begin{proof}
From the hyphotesis we easily have
$$
f(x_0)-f(y)- \langle G(y), x_0-y \rangle \leq \langle G(y)-G(x_0), x-x_0 \rangle \leq |G(y)-G(x_0)| |x-x_0|.
$$
Applying the inequality $(CW^{1, \omega})$ to the left-side term we see that
$$
\frac{1}{2} |G(y)-G(x_0)| \omega^{-1}\left( \frac{1}{2M} | G(y)- G(x_0)| \right) \leq |G(y)-G(x_0)| |x-x_0|,
$$
which inmediately implies
$$
| G(y)- G(x_0)| \leq 2M \omega( 2 |x-x_0| ) \leq 4M \omega(|x-x_0|).
$$
\end{proof}

As we have already mentioned, condition $(CW^{1, \omega})$ on $C$ implies condition $(W^{1,\omega})$ on $C$, which in turns implies condition $(W^{1, \omega})$ on the closure $\overline{C}$ (because Whitney's conditions for the class $C^{1, \omega}$ on any set $C$ extend uniquely to the closure of $C$), and therefore we may use Glaeser's $C^{1, \omega}$ version of the Whitney Extension Theorem in order to extend $f$ to $\R^n$ as a $C^{1, \omega}$ function. That is, we may assume that $f$ is extended to a $C^{1,\omega}(\Rn)$ function such that $\nabla f = G$ on $C$. In fact, as a consequence of the proof of the $C^{1, \omega}$ version of the Whitney Extension Theorem (see \cite{Glaeser} or \cite[Chapter (VI)]{Stein} for instance) the extension $f$ can be taken so that
\begin{equation}\label{estimation of norm in Whitney Glaeser theorem}
M(\nabla f, \Rn) := \sup_{x\neq y,\: x,y \in \Rn} \frac{|\nabla f(x)-\nabla f(y)|}{\omega(|x-y|)} \leq c(n) \max\lbrace \tilde{M} , M \rbrace.
\end{equation}
where $$ \widetilde{M}=
\sup_{x\neq y,\: x,y \in C} \left\lbrace \frac{|f(x)-f(y)-\langle G(y), x-y \rangle|}{|x-y|\omega(|x-y|)} \right\rbrace
$$
and $c(n)>0$ is a constant only depending on $n.$ In our problem, thanks to the condition $(CW^{1, \omega})$, we additionally know that $\widetilde{M} \leq M$ (see the proof of Remark \ref{CW11 implies W11}), and therefore 
\begin{equation}\label{MgradRninequality}
M(\nabla f, \Rn) := \sup_{x\neq y,\: x,y \in \Rn} \frac{|\nabla f(x)-\nabla f(y)|}{\omega(|x-y|)} \leq c(n) M.
\end{equation}

From now on we will denote
$$
m_C(f)(x):= m_C(f,\nabla f)(x)= \sup_{y\in C} \lbrace f(y)+\langle \nabla f(y),x-y \rangle \rbrace, \quad x\in \Rn.
$$
Note that according to Lemma \ref{existenceminimal} the function $m_C(f)$ is well defined on $\R^n$ and, being the supremum of a family of convex functions, is convex on $\Rn$ as well. We also see that, thanks to condition $(CW^{1, \omega})$, we have $f(x) \geq f(y)+ \langle \nabla f(y), x-y \rangle$ for every $x,y\in C,$ and hence $m_C(f)=f$ on $C.$
\begin{proposition} \label{differentiabilityminimal} For the function $m_C(f),$ the following property holds.
For every $x\in \Rn, \: x_0 \in C,$
$$
m_C(f)(x)-m_C(f)(x_0)- \langle \nabla f(x_0) , x-x_0  \rangle \leq 4 M \omega(|x-x_0|) |x-x_0|.
$$
\end{proposition}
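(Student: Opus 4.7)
The plan is to estimate $m_C(f)(x)-m_C(f)(x_0)-\langle\nabla f(x_0),x-x_0\rangle$ by bounding each candidate $y\in C$ in the supremum defining $m_C(f)(x)$ against the affine function at $x_0$, and then taking a supremum. I would first observe that $m_C(f)(x_0)=f(x_0)$: indeed, the term with $y=x_0$ yields $f(x_0)$, and for every other $y\in C$, condition $(CW^{1,\omega})$ gives $f(x_0)-f(y)-\langle \nabla f(y), x_0-y\rangle\geq 0$, so no other term exceeds $f(x_0)$.

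Next, for a fixed $y\in C$, the key step is the algebraic rearrangement
$$
f(y)+\langle\nabla f(y),x-y\rangle-f(x_0)-\langle\nabla f(x_0),x-x_0\rangle = \bigl[f(y)-f(x_0)-\langle\nabla f(x_0),y-x_0\rangle\bigr]+\langle\nabla f(y)-\nabla f(x_0),x-y\rangle.
$$
The bracketed quantity is controlled by the consequence of $(CW^{1,\omega})$ recorded in the proof of Remark \ref{CW11 implies W11}, namely $0\leq f(y)-f(x_0)-\langle\nabla f(x_0),y-x_0\rangle\leq \langle \nabla f(y)-\nabla f(x_0),y-x_0\rangle$. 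Substituting this upper bound and combining the two inner products collapses the expression to $\langle\nabla f(y)-\nabla f(x_0),x-x_0\rangle$, which by Cauchy--Schwarz is at most $|G(y)-G(x_0)|\,|x-x_0|$.

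To close the argument I would split into two cases. If $f(y)+\langle\nabla f(y),x-y\rangle\leq f(x_0)+\langle\nabla f(x_0),x-x_0\rangle$, then the desired inequality is trivially true (the left-hand side is nonpositive). Otherwise, the hypothesis of Lemma \ref{inequalitiapproximationminimal} is satisfied, so $|G(y)-G(x_0)|\leq 4M\,\omega(|x-x_0|)$, and combining with the previous paragraph gives
$$
f(y)+\langle\nabla f(y),x-y\rangle-f(x_0)-\langle\nabla f(x_0),x-x_0\rangle\leq 4M\,\omega(|x-x_0|)\,|x-x_0|.
$$
Taking the supremum over $y\in C$ and recalling $m_C(f)(x_0)=f(x_0)$ yields the proposition.

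The only point that requires care is the algebraic rearrangement and the clean cancellation which turns the two inner products into a single one over the vector $x-x_0$; once this is set up, Lemma \ref{inequalitiapproximationminimal} does all the analytic work, so I do not expect a genuine obstacle.
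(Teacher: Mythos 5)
Your argument is correct and follows essentially the same route as the paper's: both reduce the increment to $\langle\nabla f(y)-\nabla f(x_0),x-x_0\rangle$ via the tangent-plane inequality that $(CW^{1,\omega})$ provides, and both invoke Lemma \ref{inequalitiapproximationminimal} to bound $|\nabla f(y)-\nabla f(x_0)|$ by $4M\omega(|x-x_0|)$. The only (inessential) difference is bookkeeping: you bound each term $y\in C$ individually, split on whether it beats the term at $x_0$, and take a supremum at the end, whereas the paper works with a maximizing sequence $y_k$ and passes to a $\liminf$.
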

\begin{proof}
Given $x\in \Rn \setminus C$ and $x_0 \in C,$ by definition of $m_C(f)(x),$ we can find a sequence $\lbrace y_k \rbrace_k\subset C$ such that $\lim_k (f(y_k)+ \langle \nabla f(y_k), x-y_k \rangle) = m_C(f)(x).$ We may assume that 
$$
f(y_k) + \langle \nabla f(y_k), x-y_k \rangle \geq f(x_0) +\langle \nabla f(x_0), x-x_0 \rangle
$$
for $k\geq k_0$ large enough (indeed, if $m_{C}(f)(x)=f(x_0)+\langle \nabla f(x_0), x-x_0\rangle$ we may take $y_k=x_0$ for every $k$; otherwise we have $m_{C}(f)(x)>f(x_0)+\langle \nabla f(x_0), x-x_0\rangle$ and the inequality follows by definition of $y_k$). Hence Lemma \ref{inequalitiapproximationminimal} gives 
\begin{equation} \label{inequalitygoodestimation}
|\nabla f(y_k)- \nabla f(x_0)| \leq 4M \omega(|x-x_0|) \quad \text{for} \quad k\geq k_0.
\end{equation}
On the other hand, 
\begin{align*}
0 &\leq  m_C(f)(x)- f(x_0)- \langle \nabla f (x_0), x-x_0 \rangle \\
& = \lim_k \left( f(y_k)+\langle \nabla f(y_k), x-y_k \rangle - f(x_0)-\langle \nabla f(x_0),x-x_0 \rangle \right) \\
& \leq \liminf_k \left( f(y_k)+\langle \nabla f(y_k), x-y_k \rangle - f(y_k)-\langle \nabla f(y_k) ,x_0-y_k \rangle - \langle \nabla f(x_0),x-x_0 \rangle \right) \\
& = \liminf_k \langle \nabla f(y_k)- \nabla f(x_0) ,x-x_0 \rangle \leq \liminf_k | \nabla f(y_k)- \nabla f(x_0) | |x-x_0|.
\end{align*}
By \eqref{inequalitygoodestimation}, the last term is smaller than or equal to $4M \omega(|x-x_0|)|x-x_0|.$ We thus have proved the required inequality.
\end{proof}

Now we are going to use the following result, which is implicit in \cite{Azagra} and describes the (rather rigid) global geometrical behaviour of convex functions defined on $\R^n$.

\begin{proposition}\label{rigid global behaviour of convex functions}
Let $g:\R^n\to\R$ be a convex function, and assume $g$ is not affine. Then there exist a linear function $\ell:\R^n\to\R$, a positive integer $k\leq n$, a linear subspace $X\subseteq\R^n$ of dimension $k$, and a convex function $c:X\to\R$ such that
$$
\lim_{|x|\to\infty}c(x)=\infty \, \textrm{ and } \,
g=\ell + c\circ P,
$$
where $P:\R^n\to X$ is the orthogonal projection.
\end{proposition}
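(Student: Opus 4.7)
The plan is to use the recession function of $g$ to identify a maximal subspace $L$ along which $g$ has affine behaviour, and then peel off a carefully chosen linear functional so that the remainder is constant along $L$ and coercive on $L^{\perp}$.

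First I would set up the recession function $g^{\infty}(v):=\lim_{t\to+\infty}(g(x_0+tv)-g(x_0))/t$, which for a finite-valued convex $g$ on $\R^n$ is finite, sublinear, and independent of $x_0$. Define
$$
L:=\{v\in\R^n:g^{\infty}(v)+g^{\infty}(-v)=0\};
$$
using subadditivity and $g^{\infty}(v)+g^{\infty}(-v)\ge 0$ this is readily checked to be a linear subspace, and on $L$ the sublinear $g^{\infty}$ is also odd and hence linear. Moreover, for each $v\in L$ the one-variable convex function $t\mapsto g(x+tv)$ has both asymptotic slopes equal to $g^{\infty}(v)$, so its monotone derivative is identically $g^{\infty}(v)$, which yields $g(x+v)-g(x)=g^{\infty}(v)$ for all $x\in\R^n$ and $v\in L$.

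Next, let $X:=L^{\perp}$ and consider the sublinear function $p:=g^{\infty}|_X$, which satisfies the strict positivity $p(w)+p(-w)>0$ for $w\in X\setminus\{0\}$. Its subdifferential $\partial p(0)=\{\ell'\in X^{*}:\ell'\le p\}$ (non-empty by Hahn--Banach) must then have non-empty interior, for otherwise $\partial p(0)$ would lie in some affine hyperplane $\{\ell':\ell'(w_0)=c\}$, forcing $p(w_0)=-p(-w_0)$ (both equal to $c$), a contradiction. Pick $\ell_X$ in $\mathrm{int}(\partial p(0))$; a compactness argument on the unit sphere of $X$ then yields $\varepsilon>0$ with $p(w)-\ell_X(w)\ge\varepsilon|w|$ for all $w\in X$. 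Now define a linear $\ell:\R^n\to\R$ by extending $g^{\infty}|_L$ on $L$ and $\ell_X$ on $X$ via the orthogonal decomposition $\R^n=L\oplus X$.

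Setting $h:=g-\ell$, the identity $g(x+v)=g(x)+\ell(v)$ for $v\in L$ gives $h(x+v)=h(x)$, so $h$ is $L$-translation-invariant and factors as $h=c\circ P$, where $P:\R^n\to X$ is the orthogonal projection and $c:=h|_X$ is convex; this produces the decomposition $g=\ell+c\circ P$. The dimension $k:=\dim X$ is at least $1$: if $L=\R^n$ then $h$ would be constant and $g$ would be affine, contradicting the hypothesis. Coercivity of $c$ then follows from $c^{\infty}(w)=p(w)-\ell_X(w)\ge\varepsilon|w|$: if $c$ were not coercive, extracting $x_n\in X$ with $|x_n|\to\infty$, $c(x_n)\le M$, and $x_n/|x_n|\to w$ with $|w|=1$, the convexity bound $c(Tx_n/|x_n|)\le(T/|x_n|)c(x_n)+(1-T/|x_n|)c(0)$ and continuity of $c$ would give $c(Tw)\le c(0)$ for every $T>0$, contradicting $c^{\infty}(w)\ge\varepsilon>0$. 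The main technical obstacle is the second step --- producing a linear functional $\ell_X$ lying strictly inside the subdifferential of the ``strictly sublinear'' $p$ --- which is essentially an application of Hahn--Banach.
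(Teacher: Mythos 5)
Your argument is essentially correct but takes a genuinely different route from the paper's. The paper invokes Lemma 4.2 of \cite{Azagra} (the ``corner function'' support lemma) and peels off one direction at a time, iterating until what remains is coercive; you instead identify the lineality space $L$ of $g$ in a single step via the recession function $g^\infty$, set $X=L^\perp$, and produce $\ell_X$ strictly inside $\partial\bigl(g^\infty|_X\bigr)(0)$ by a separation argument. Your route is more self-contained (it avoids the external citation) and constructs $L$, $X$, $\ell$, and $c$ explicitly rather than inductively, at the price of a little more care with extended-real-valued convex analysis.

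One factual slip should be fixed: the recession function $g^\infty$ of a finite-valued convex $g$ on $\R^n$ need \emph{not} be finite (e.g.\ $g(x)=e^x$ on $\R$ gives $g^\infty(1)=+\infty$). Fortunately this does not break your argument, but two justifications need adjusting. First, $g^\infty$ is still a \emph{closed} proper sublinear function (closedness is automatic because $g$ is continuous on $\R^n$), and closedness is what makes $p=g^\infty|_X$ the support function of the nonempty closed convex set $\partial p(0)$; that identity is exactly what your separation argument (``$\partial p(0)$ in a hyperplane forces $p(w_0)+p(-w_0)=0$'') relies on, so it should be invoked explicitly. Second, for the bound $p(w)-\ell_X(w)\ge\varepsilon|w|$ you should not appeal to continuity of $p$ on the unit sphere (since $p$ may be $+\infty$ there); either derive the bound directly from $B(\ell_X,\varepsilon)\subseteq\partial p(0)$, namely $p(w)\ge\sup_{\ell'\in B(\ell_X,\varepsilon)}\langle\ell',w\rangle=\ell_X(w)+\varepsilon|w|$, or use that $p-\ell_X$ is lower semicontinuous and strictly positive on the sphere. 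With these repairs, the remainder of your proof (the identification of $L$ as the directions of affine behaviour, the factorization $h=c\circ P$, the verification $k\ge1$, and the coercivity argument from $c^\infty\ge\varepsilon|\cdot|$) is sound.
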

\begin{proof}
We use the terminology of \cite[Section 4]{Azagra}. If $g$ is supported by an $(n+1)$-dimensional corner function then it is clear that $f=\ell+c$, with $c$ convex and $\lim_{|x|\to\infty}c(x)=\infty$, so we may take $X=\R^n$ and $P$ as the identity. Otherwise, by the proof of \cite[Lemma 4.2]{Azagra} (see also the proof of \cite[Proposition 1.6]{Azagra} on page 813 to know why \cite[Lemma 4.2]{Azagra} also holds true for nonsmooth convex functions), there exist a positive integer $k_1<n$, a linear subspace $X_1\subseteq\R^n$ of dimension $k_1$, and a convex function $c_1:X_1\to\R$ such that
$$
g=\ell_1 + c_1\circ P_1,
$$
where $P_1:\R^n\to X_1$ is the orthogonal projection and $\ell_1$ is linear. If $\lim_{|x|\to\infty}c_1(x)=\infty$ we are done. Otherwise, we apply the same argument to the convex function $c_1:X_1\to\R$ in order to obtain a subspace $X_2\subset X_1$ of dimension $k_2<k_1$, an orthogonal projection $P_2:X_1\to X_2$, a convex function $c_2:X_2\to\R$, and a linear function $\ell_2:X_1\to\R$ such that $c_1=\ell_2+c_2\circ P_2$; in particular we have
$$
g=(\ell_1 + \ell_2\circ P_1)+ c_2\circ P_2\circ P_1 :=q_2+c_2\circ Q_2,
$$
where $q_2$ is linear and $Q_2$ is still an orthogonal projection. Because $g$ is not affine, by iterating this argument at most $n-1$ times we arrive at an expression
$
g=q+c\circ P,
$
where $q$ is linear, $P:\R^n\to X$ is an orthogonal projection onto a subspace $X$ of dimension at least $1$, and $c$ is convex with $\lim_{|x|\to\infty}c(x)=\infty$.
\end{proof}

By applying the preceding Proposition to $m_{C}(f)$ we may write $m_C(f)=\ell + c \circ P$, with $\ell$ and $P$ as in the statement of the Proposition. Then, in the case that $k<n$, by taking coordinates with respect to an appropriate orthonormal basis of $\R^n$, we may assume without loss of generality that $X=\R^k\times\{0\}$ (which we identify with $\R^k$) and in particular that $P(x_1, ... , x_n)=(x_1, ... , x_k)$. Furthermore, since every linear function $\ell$ is convex, of class $C^{1,1}$, and satisfies $M(\nabla\ell, \R^n)=0$ and, besides,
\begin{equation*}
m_{C}(f)(x)-\ell(x)=\sup_{y\in C}\{f(y)-\ell(y)+\langle \nabla f(y)-\ell, x-y\rangle\}=
m_{C}(f-\ell, \nabla(f-\ell))(x),
\end{equation*}
we see that the addition or subtraction of a linear function does not affect our extension problem, and therefore we may also assume that $\ell=0$. 

Hence, from now on, we assume that $m_C(f)$ is of the form 
\begin{equation}\label{factorizationminimal}
m_C(f)=c \circ P,
\end{equation} where $c: \R^k \to \R$ is a convex function on $\R^k$ with $k\leq n$,  $\lim_{|x| \to \infty} c(x)= +\infty$, and $P : \R^n \to \R^k$ is defined by $P(x_1, \ldots , x_n ) = (x_1, \ldots, x_k)$ for all $x=(x_1, \ldots , x_n) \in \Rn.$ Of course, in the case that $k=n,$  $P$ is the identity map. In the case $k<n$, from \eqref{factorizationminimal} it is clear that $ \frac{\partial m_C(f)}{\partial x_j}(x)=0$ for every $j>k$ and $x\in C$. Recall that, by Proposition \ref{differentiabilityminimal}, the function $m_C(f)$ is differentiable at every $x\in C$, and $\nabla m_C(f) (x)= \nabla f (x).$

\begin{lemma}\label{cdifferentiablepc}
The function $c$ is $\omega$-differentiable on $P(C)$ and for each $x_0\in P(C)$ we have $\nabla c (x_0)= P( \nabla f(z_0))$ for every $z_0 \in C$ with $P(z_0)=x_0.$ In fact, we have that
$$
0\leq c(x)-c(x_0)  - \langle P( \nabla f(z_0)), x-x_0 \rangle \leq 4 M \omega \left( |x-x_0| \right)|x-x_0|.
$$
for every $x\in \R^k, \:x_0\in P(C)$, and $z_0\in C$ with $P(z_0)=x_0.$
\end{lemma}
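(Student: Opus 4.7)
The plan is to transfer the estimate of Proposition \ref{differentiabilityminimal} from the function $m_C(f)$ on $\R^n$ down to $c$ on $\R^k$ by using the factorization $m_C(f) = c\circ P$. Concretely, given $x\in\R^k$, $x_0\in P(C)$ and $z_0\in C$ with $P(z_0) = x_0$, I would write $z_0 = (x_0, z_0'')$ with $z_0''\in\R^{n-k}$ and set $\widetilde{x} := (x, z_0'') \in \R^n$. Then $P(\widetilde{x}) = x$ and, crucially, $\widetilde{x} - z_0 = (x - x_0, 0)$, so that $|\widetilde{x} - z_0| = |x - x_0|$.

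Next I would apply Proposition \ref{differentiabilityminimal} to the pair $(\widetilde{x}, z_0)$, obtaining
$$
0 \leq m_C(f)(\widetilde{x}) - m_C(f)(z_0) - \langle \nabla f(z_0), \widetilde{x}-z_0\rangle \leq 4M\,\omega(|\widetilde{x}-z_0|)\,|\widetilde{x}-z_0|.
$$
Using $m_C(f) = c\circ P$ we have $m_C(f)(\widetilde{x}) = c(x)$ and $m_C(f)(z_0) = c(x_0)$. Since $\widetilde{x} - z_0$ has zero last $n-k$ coordinates, $\langle \nabla f(z_0), \widetilde{x}-z_0\rangle = \langle P(\nabla f(z_0)), x - x_0\rangle$. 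Substituting these identities into the display above yields exactly the stated inequality of the lemma.

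From the double inequality one reads off that $c$ is differentiable at $x_0$ with $\nabla c(x_0) = P(\nabla f(z_0))$, and the remainder is controlled by $4M\,\omega(|x-x_0|)\,|x-x_0|$, which is the sense in which $c$ is $\omega$-differentiable on $P(C)$. Because $c$ is convex, its gradient at any point of differentiability is uniquely determined, so $P(\nabla f(z_0))$ must be independent of the choice of $z_0 \in C$ in the fiber $P^{-1}(x_0)\cap C$.

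There is no real obstacle in this argument; the only subtlety is the choice of the lift $\widetilde{x}$, which must share its last $n-k$ coordinates with $z_0$ precisely so that $\widetilde{x} - z_0$ lies in the subspace $\R^k\times\{0\}$. This ensures both that $|\widetilde{x}-z_0| = |x-x_0|$ (needed to match the modulus-of-continuity factor) and that the inner product collapses to one against $P(\nabla f(z_0))$, making Proposition \ref{differentiabilityminimal} transfer verbatim to $c$ on $\R^k$.
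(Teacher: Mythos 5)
Your argument is exactly the paper's: you lift $x$ to $\widetilde{x}=(x,z_0'')$ (the paper writes $(x,\overline{x_0})$ with $\overline{x_0}=Q(z_0)$) so that $\widetilde{x}-z_0$ lies in $\R^k\times\{0\}$, then apply Proposition~\ref{differentiabilityminimal} to the pair $(\widetilde{x},z_0)$ and use $m_C(f)=c\circ P$ to collapse everything onto $\R^k$. The only cosmetic difference is that you also spell out why $P(\nabla f(z_0))$ is independent of the lift $z_0$, which the paper leaves implicit.
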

\begin{proof}
In the case $k=n,$ our result is precisely Proposition \ref{differentiabilityminimal}. We now consider the case $k<n.$ Fix $x\in \R^k, \: x_0\in P(C)$, and $z_0 \in C$ such that $P(z_0)=x_0.$ We denote $ \overline{x_0}=Q(z_0),$ where $Q: \Rn \to \R^{n-k}$ is given by $Q(y_1,\ldots, y_n)=(y_{k+1}, \ldots , y_n)$ for every $y=(y_1,\ldots,y_n) \in \Rn.$ By equation \eqref{factorizationminimal} and the fact that $z_0=(x_0,\overline{x_0})$ we see that
\begin{align*}
c(x)& -c(x_0)  - \langle P( \nabla f(z_0)), x-x_0 \rangle \\
& = m_C(f)(x,\overline{x_0})-m_C(f)(z_0)- \langle \nabla f(z_0), (x,\overline{x_0})-z_0 \rangle.
\end{align*}
From Proposition \ref{differentiabilityminimal} we obtain that the last term is less than or equal to
$
4M \omega \left( |(x,\overline{x_0})-z_0| \right)|(x,\overline{x_0})-z_0 |= 4M \omega \left( |x-x_0| \right)|x-x_0|.
$
\end{proof}

\begin{lemma} \label{csatifiescw11}
The pair $(c,\nabla c)$ satisfies inequality $(CW^{1, \omega})$ on $P(C)$ with constant $M$ and $\eta=1/2$. That is,
$$
c(x)-c(y)- \langle \nabla c (y) , x-y \rangle \geq \frac{1}{2}| \nabla c(x) - \nabla c(y)| \omega^{-1} \left( \frac{1}{2M}  | \nabla c(x) - \nabla c(y)| \right)
$$
for every $x,y\in P(C).$ In particular, recalling Remark \ref{CW11 implies W11} we have 
$$ 0 \leq c(x)-c(y)-\langle \nabla c(y), x-y \rangle \leq 2 M |x-y|\omega(|x-y|) \quad \text{and}
$$
$$
| \nabla c(x) - \nabla c(y) | \leq 2 M \omega(|x-y|) \quad \text{for all} \quad x,y \in P(C).
$$
\end{lemma}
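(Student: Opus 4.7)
The plan is to reduce the required inequality on $P(C)$ to the hypothesis $(CW^{1,\omega})$ for $(f,\nabla f)$ on $C$, by lifting points in $P(C)$ back to $C$ and using the product structure $m_C(f)=c\circ P$ to identify gradients.

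First I would establish the key preliminary that for every $z\in C$ the last $n-k$ coordinates of $\nabla f(z)$ vanish; equivalently, $\nabla f(z)=(\nabla c(P(z)),0)\in \R^k\times\R^{n-k}$. To see this, note that by definition $m_{C}(f)(x)\geq f(z)+\langle \nabla f(z),x-z\rangle$ for all $x\in\R^n$, while Proposition \ref{differentiabilityminimal} gives the matching upper bound $m_C(f)(x)-m_C(f)(z)-\langle \nabla f(z),x-z\rangle\leq 4M\omega(|x-z|)|x-z|$ (recalling $m_C(f)(z)=f(z)$ for $z\in C$). Together these force $m_C(f)$ to be differentiable at $z$ with $\nabla m_C(f)(z)=\nabla f(z)$. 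On the other hand, differentiating $c\circ P$ at $z$ via the chain rule yields $\nabla m_{C}(f)(z)=P^{*}(\nabla c(P(z)))=(\nabla c(P(z)),0)$, which is the claim.

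With that in hand, the main step is essentially bookkeeping. Fix $x,y\in P(C)$ and pick lifts $z_x,z_y\in C$ with $P(z_x)=x$, $P(z_y)=y$. Since $m_C(f)=f$ on $C$, we have $c(x)=f(z_x)$ and $c(y)=f(z_y)$; by the preliminary, $|\nabla f(z_x)-\nabla f(z_y)|=|\nabla c(x)-\nabla c(y)|$, and because $\nabla f(z_y)$ has vanishing $Q$-component, writing $z_x-z_y=(x-y,\,Q(z_x)-Q(z_y))$ gives the crucial identity
\[
\langle \nabla f(z_y),\, z_x-z_y\rangle=\langle \nabla c(y),\, x-y\rangle,
\]
regardless of how the lifts differ in the last $n-k$ coordinates. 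Substituting these identifications into the inequality $(CW^{1,\omega})$ applied to $(f,\nabla f)$ at $z_x,z_y\in C$ (with constant $M$ and $\eta=1/2$) then produces exactly the desired lower bound for $(c,\nabla c)$. The two additional estimates in the statement follow at once by invoking Remark \ref{CW11 implies W11}(2) for $(c,\nabla c)$ on $P(C)$.

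The main obstacle, modest as it is, lies in the preliminary step: justifying that $\nabla f$ takes values in the subspace $\R^k\times\{0\}$ on all of $C$. Once this rigidity is in hand, the rest is a direct transfer: $P$ realizes an isometric identification of $\nabla c(x)$ with $\nabla f(z_x)$, the inner product on the right-hand side collapses cleanly thanks to the vanishing $Q$-component, and the choice of lift $z_x$ is immaterial because $c(x)$ and $\nabla c(x)$ depend only on $x$.
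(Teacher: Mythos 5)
Your proof is correct and follows essentially the same route as the paper: lift $x,y\in P(C)$ to $z,w\in C$, use the fact that $\nabla f(z)=(\nabla c(P(z)),0)$ (which the paper has already established via Lemma \ref{cdifferentiablepc} and the observation that the factorization $m_C(f)=c\circ P$ forces $\partial m_C(f)/\partial x_j=0$ for $j>k$ on $C$), and substitute directly into $(CW^{1,\omega})$ for $(f,\nabla f)$. The only quibble is that invoking the chain rule on $c\circ P$ presupposes differentiability of $c$ at $P(z)$; the cleaner route (taken in Lemma \ref{cdifferentiablepc}) is to restrict $m_C(f)$ to the affine slice $z+\R^k\times\{0\}$ and read off differentiability of $c$ directly from Proposition \ref{differentiabilityminimal}, but since that lemma is already available your preliminary step is in any case justified.
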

\begin{proof}
Given two points $x,y\in P(C),$ we find $z,w\in C$ such that $P(z)=x$ and $P(w)=y.$ Proposition \ref{cdifferentiablepc} shows that $\nabla c(x)= P(\nabla f (z) )$ and $\nabla c(y)= P( \nabla f (w) )$ and of course $c(x)=f(z), \: c(y)=f(w).$ Recall also that, in the case $k<n\:, \nabla f(z)=( P(\nabla f(z)), 0)$ and similarly for $w.$ This proves that
$$
c(x)-c(y)- \langle \nabla c (y) , x-y \rangle=f(z)-f(w)- \langle \nabla f (w) , z-w \rangle \quad \text{and} \quad
$$ 
$$
| \nabla c(x) - \nabla c(y)| = | \nabla f(z) - \nabla f(w)|.
$$
Because the pair $(f,\nabla f)$ satisfies $(CW^{1, \omega})$ on $C$ we have
$$
f(z)-f(w)- \langle \nabla f (w) , z-w \rangle \geq \frac{1}{2}| \nabla f(z) - \nabla f(w)| \omega^{-1} \left( \frac{1}{2M}  | \nabla f(z) - \nabla f(w)| \right),
$$
which inmediately implies
$$
c(x)-c(y)- \langle \nabla c (y) , x-y \rangle \geq \frac{1}{2}| \nabla c(x) - \nabla c(y)| \omega^{-1} \left( \frac{1}{2M}  | \nabla c(x) - \nabla c(y)| \right).
$$
\end{proof}

\begin{proposition}\label{csatisfiescw11closure}
The function $c$ is differentiable on $\overline{P(C)}$ and the pair $(c,\nabla c)$ satisfies inequality $(CW^{1, \omega})$ on $\overline{P(C)}$ with constant $M$ and $\eta=1/2$. In particular the pair $(c,\nabla c)$ satisfies Whitney's condition $W^{1,\omega}$ on $\overline{P(C)}$ with
$$ 0 \leq c(x)-c(y)-\langle \nabla c(y), x-y \rangle \leq 2 M |x-y|\omega(|x-y|) \quad \text{and}
$$
$$
| \nabla c(x) - \nabla c(y) | \leq 2 M \omega(|x-y|) \quad \text{for all} \quad x,y \in \overline{P(C)}.
$$
In addition,
\begin{equation}\label{comegadiffclosure}
0 \leq c(x)-c(y)-\langle \nabla c(y), x-y \rangle \leq 4 M |x-y|\omega(|x-y|) \quad x\in \R^k,\: y\in \overline{P(C)}.
\end{equation} 
\end{proposition}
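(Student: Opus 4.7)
The plan is to define $\nabla c(y)$ for $y\in\overline{P(C)}\setminus P(C)$ as a limit of gradients at nearby points of $P(C)$, then pass all the relevant inequalities from $P(C)$ to its closure by continuity, and finally deduce differentiability of $c$ at each boundary point from the resulting two-sided estimate together with the convexity of $c$ on $\R^k$.

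First, fix $y\in\overline{P(C)}\setminus P(C)$ and pick any sequence $\{y_n\}\subset P(C)$ with $y_n\to y$. By Lemma \ref{csatifiescw11},
$$|\nabla c(y_n)-\nabla c(y_m)|\leq 2M\,\omega(|y_n-y_m|),$$
so $\{\nabla c(y_n)\}$ is Cauchy; call its limit $\nabla c(y)$. The same estimate applied to two approximating sequences shows independence of the choice, and yields $|\nabla c(x)-\nabla c(y)|\leq 2M\,\omega(|x-y|)$ for all $x,y\in\overline{P(C)}$. Thus $\nabla c$ is well defined and $\omega$-continuous (with constant $2M$) on $\overline{P(C)}$.

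Next I would pass the $(CW^{1,\omega})$ inequality of Lemma \ref{csatifiescw11} and the upper bound of Lemma \ref{cdifferentiablepc} to the closure by continuity, using that $c$ is continuous on $\R^k$ (convex), that $\nabla c$ is continuous on $\overline{P(C)}$ by the previous step, and that $\omega^{-1}$ is continuous at $0$ (since $\omega(0^{+})=0$). For instance, for $x,y\in\overline{P(C)}$ approximated by $x_n,y_n\in P(C)$,
\begin{align*}
c(x)-c(y)-\langle\nabla c(y),x-y\rangle
&=\lim_n\bigl(c(x_n)-c(y_n)-\langle\nabla c(y_n),x_n-y_n\rangle\bigr)\\
&\geq\lim_n \tfrac{1}{2}|\nabla c(x_n)-\nabla c(y_n)|\,\omega^{-1}\!\bigl(\tfrac{1}{2M}|\nabla c(x_n)-\nabla c(y_n)|\bigr)\\
&=\tfrac{1}{2}|\nabla c(x)-\nabla c(y)|\,\omega^{-1}\!\bigl(\tfrac{1}{2M}|\nabla c(x)-\nabla c(y)|\bigr),
\end{align*}
and the Whitney bounds stated in the proposition follow analogously.

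For \eqref{comegadiffclosure} and the differentiability of $c$ at each $y\in\overline{P(C)}$, fix $x\in\R^k$ and approximate $y$ by $y_n\in P(C)$. Passing Lemma \ref{cdifferentiablepc} to the limit gives the upper bound $c(x)-c(y)-\langle\nabla c(y),x-y\rangle\leq 4M\,\omega(|x-y|)|x-y|$. For the lower bound, since $c$ is convex on $\R^k$ and differentiable at each $y_n\in P(C)$, we have $\nabla c(y_n)\in\partial c(y_n)$, hence $c(x)\geq c(y_n)+\langle\nabla c(y_n),x-y_n\rangle$ for every $x\in\R^k$; letting $n\to\infty$ yields $c(x)\geq c(y)+\langle\nabla c(y),x-y\rangle$, that is, $\nabla c(y)\in\partial c(y)$. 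Combining both bounds gives \eqref{comegadiffclosure}, and in particular $c$ is differentiable at $y$ with the very gradient $\nabla c(y)$ defined above (so the ad hoc limit definition is consistent \emph{a posteriori} with the gradient of $c$).

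The only delicate point is that the $(CW^{1,\omega})$ inequality involves $\omega^{-1}$, so one must check that $\omega^{-1}$ is continuous at $0$ in order to pass the inequality to pairs $(x,y)\in\overline{P(C)}^2$ for which $\nabla c(x)=\nabla c(y)$; this is guaranteed by the standing assumption $\omega(0^{+})=0$. Everything else is a routine limiting argument built on the Lipschitz-type estimate for $\nabla c$ obtained in the first paragraph.
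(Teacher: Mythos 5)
Your proposal is correct and follows essentially the same route as the paper: extend $\nabla c$ to $\overline{P(C)}$ via a Cauchy/limiting argument using the $\omega$-continuity estimate, pass the $(CW^{1,\omega})$ inequality and the bound of Lemma \ref{cdifferentiablepc} to the closure by continuity, and deduce differentiability on $\overline{P(C)}$ from the resulting two-sided estimate in \eqref{comegadiffclosure}. The paper compresses your first two paragraphs into the phrase ``a routine density argument,'' and the separate subgradient argument you give for the lower bound in \eqref{comegadiffclosure} is redundant since that lower bound is already part of the statement of Lemma \ref{cdifferentiablepc} and passes to the limit directly, but these are presentational differences only.
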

\begin{proof}
By Lemma \ref{csatifiescw11} we have that $(c,\nabla c)$ satisfies $(CW^{1, \omega})$ on $P(C)$ with constant $M$. Then a routine density argument immediately shows that $\nabla c$ has a unique $\omega$-continuous extension $H$ to $\overline{P(C)}$ and that the pair $(c,\nabla c)$ also satisfies $(CW^{1, \omega})$ on $\overline{P(C)}$ with the same constant $M$. In particular, the following inequalities hold:
$$ 0 \leq c(x)-c(y)-\langle H(y), x-y \rangle \leq 2 M |x-y|\omega(|x-y|) \quad \text{and}
$$
$$
| H(x) - H(y) | \leq 2 M \omega(|x-y|) \quad \text{for all} \quad x,y \in \overline{P(C)}.
$$
Now, given $x\in \R^k$ and $y\in \overline{P(C)},$ by Lemma \ref{cdifferentiablepc} we have
$$
c(x)-c(y_p)-\langle \nabla c(y_p), x-y_p \rangle \leq 4 M |x-y_p|\omega(|x-y_p|) \quad p\in \N,
$$
for every sequence $\{ y_p \}_p $ in $P(C)$ converging to $y$. Passing to the limit as $p \to \infty$ in the above inequality and bearing in mind that $c$ and $H$ are continuous on $\overline{P(C)}$ we obtain 
$$
0 \leq c(x)-c(y)-\langle H(y), x-y \rangle \leq 4 M |x-y|\omega(|x-y|) \quad x\in \R^k,\: y\in \overline{P(C)},
$$ which in particular implies that $c$ is $\omega$-differentiable on $\overline{P(C)}$ with $\nabla c=H$.
\end{proof}

Thanks to Proposition \ref{csatisfiescw11closure} we may apply Whitney's Extension Theorem to extend $c$ from $\overline{P(C)}$ to a function $\widetilde{c} \in C^{1,\omega}(\R^k)$ such that $\nabla \widetilde{c} = \nabla c$ on $\overline{P(C)}$ and
$$
M(\nabla \widetilde{c},\R^k):= \sup_{x\neq y, \:x,y\in \R^k} \frac{|\nabla \widetilde{c} (x)- \nabla \widetilde{c}(y)|}{\omega(|x-y|)} \leq \gamma(n) M,
$$
for a constant $\gamma(n)>0$ depending only on $n$
(see equations \eqref{estimation of norm in Whitney Glaeser theorem}, \eqref{MgradRninequality}, and the argument after Lemma \ref{inequalitiapproximationminimal}).
Note that for every $x\in \R^k,$ if we pick a point $x_0\in \overline{P(C)}$ with $d(x,\overline{P(C)})=|x-x_0|,$ we obtain, by inequality \eqref{comegadiffclosure} in Proposition \ref{csatisfiescw11closure} and the facts that $\widetilde{c}(x_0)=c(x_0)$ and $\nabla \widetilde{c}(x_0)= \nabla c(x_0),$ that
\begin{equation}\label{differencesctildec}
|c(x)-\widetilde{c}(x)| \leq (4+\gamma(n)) M \omega(d(x,\overline{P(C)}))d(x,\overline{P(C)}).
\end{equation}

Our next step is constructing a function $\varphi$ of class $C^{1,\omega}(\R^k)$ which vanishes on the closed set $E:=\overline{P(C)}$ and is greater than or equal to the function $|\widetilde{c} - c|$ on $\R^k \setminus E$. To this purpose we need to use a Whitney decomposition of an open set into cubes and the corresponding partition of unity, see \cite[Chapter VI]{Stein} for an exposition of this technique. So let $\lbrace Q_j\rbrace_j$ be a decomposition of $\R^k \setminus E$ into Whitney cubes, and for a fixed number $0 <\varepsilon_0 < 1/4$ (for instance take $\varepsilon_0=1/8$) consider the corresponding cubes $\lbrace Q_j^* \rbrace_j$ with the same center as $Q_j$ and dilated by the factor $1+\varepsilon_0.$ We next sum up some of the most important properties of this cubes.

\begin{proposition} \label{propositioncubes}
The families $\lbrace Q_j \rbrace_j$ and $\lbrace Q_j^* \rbrace_j$ are sequences of compact cubes for which:
\item[(i)] $\bigcup_j Q_j = \R^k \setminus E.$
\item[(ii)] The interiors of $Q_j$ are mutually disjoint.
\item[(iii)] $\diam(Q_j) \leq d(Q_j, E) \leq 4 \diam(Q_j)$ for all $j$.
\item[(iv)] If two cubes $Q_l$ and $Q_j$ touch each other, that is $\partial Q_l \cap \partial Q_j \neq \emptyset,$ then $\diam(Q_l) \approx \diam(Q_j).$
\item[(v)] Every point of $\R^k \setminus E$ is contained in an open neighbourhood which intersects at most $N=(12)^k$ cubes of the family $\lbrace Q_j^* \rbrace_j.$ 
\item[(vi)] If $x\in Q_j, $ then $d(x, E) \leq 5 \diam(Q_j).$
\item[(vii)] If $x\in Q_j^*,$ then $ \frac{3}{4}\diam(Q_j) \leq d(x,E) \leq (6+\varepsilon_0) \diam(Q_j)$ and in particular $Q_j^* \subset \R^k \setminus E$.
\item[(viii)] If two cubes $Q_l^*$ and $Q_j^*$ are not disjoint, then $\diam(Q_l) \approx \diam(Q_j).$
\end{proposition}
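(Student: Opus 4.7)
The plan is to invoke the classical Whitney decomposition of the open set $\Omega:=\R^k\setminus E$. Specifically, I would work with the dyadic mesh $\mathcal{D}_j$ of $\R^k$ of closed cubes of side length $2^{-j}$ with vertices in $2^{-j}\Z^k$, and for each $j\in\Z$ set
$$
\Omega_j=\{x\in\Omega\,:\,2\sqrt{k}\cdot 2^{-j}<d(x,E)\leq 4\sqrt{k}\cdot 2^{-j}\}.
$$
Since $E$ is closed, $\Omega=\bigcup_j\Omega_j$. Select all dyadic cubes $Q\in\mathcal{D}_j$ that meet $\Omega_j$; from this preliminary family keep only the maximal cubes under inclusion. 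This yields the Whitney family $\{Q_j\}_j$. The comparable side length of these cubes, together with their dyadic nature, makes every subsequent verification a matter of elementary geometry.

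Once the family is fixed, I would verify the stated properties in the listed order. Property (i) is immediate since $\Omega=\bigcup_j\Omega_j$; (ii) follows from the fact that two distinct dyadic cubes either nest or have disjoint interiors, together with the maximality pruning; (iii) is built into the selection rule, because $x\in Q\cap\Omega_j$ with $Q\in\mathcal{D}_j$ forces $d(Q,E)$ to lie between roughly $\diam(Q)$ and $4\diam(Q)$ after normalising constants. Property (vi) follows from (iii) via the triangle inequality $d(x,E)\leq d(Q_j,E)+\diam(Q_j)\leq 5\diam(Q_j)$ for $x\in Q_j$. For (vii), if $x\in Q_j^*$ pick the centre $c_j$ of $Q_j$ and use $|x-c_j|\leq \tfrac{1+\varepsilon_0}{2}\sqrt{k}\,\mathrm{side}(Q_j)$ together with (iii) to get both the lower bound $\tfrac34\diam(Q_j)$ (the $\varepsilon_0<1/4$ leaves a comfortable margin so $Q_j^*$ stays away from $E$) and the upper bound $(6+\varepsilon_0)\diam(Q_j)$. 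Properties (iv) and (viii) are a direct consequence of (iii) combined with the triangle inequality: if $Q_l$ and $Q_j$ touch (or $Q_l^*$ and $Q_j^*$ meet), then $d(Q_l,E)$ and $d(Q_j,E)$ differ by at most $O(\diam(Q_l)+\diam(Q_j))$, which forces $\diam(Q_l)\asymp\diam(Q_j)$.

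The step I expect to be the main obstacle is the bounded-overlap property (v), with the precise constant $N=(12)^k$. The idea is as follows: if $x\in Q_j^*$, property (vii) shows that $\diam(Q_j)\asymp d(x,E)$, and combining this with (viii) one sees that any two cubes $Q_l^*,Q_j^*$ both containing $x$ must have side lengths within a factor of (say) $4$ of each other. Fix the scale of the smallest such $Q_l$ and count how many dyadic cubes of comparable side length can meet the ball $B(x,2d(x,E))$; a straightforward volume/packing argument in $\R^k$ caps this number by $12^k$. Once this is established, an open neighbourhood of $x$ avoiding the finitely many boundaries of nearby $Q_j^*$ provides the required neighbourhood. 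For a fully detailed exposition of this construction and the sharp constants, I would refer to Chapter VI of \cite{Stein}, from which the statement is essentially extracted.
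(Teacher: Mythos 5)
Your proposal is the classical Whitney construction from Stein's Chapter VI, which is precisely what the paper itself does: the proposition is stated there without proof, as a summary of \cite[Chapter VI]{Stein}, so the route you take is the same as the paper's. One computational detail in your verification of (vii) does not quite close: comparing $x\in Q_j^*$ with the centre $c_j$ via $|x-c_j|\le\tfrac{1+\varepsilon_0}{2}\diam(Q_j)$ and the bound $d(c_j,E)\ge\diam(Q_j)$ from (iii) only yields $d(x,E)\ge\tfrac{1-\varepsilon_0}{2}\diam(Q_j)$, which is below $\tfrac34\diam(Q_j)$ once $k\ge 4$ (even the sharper estimate $d(c_j,E)\ge\diam(Q_j)+\tfrac12\mathrm{side}(Q_j)$ gives $\bigl(\tfrac12+\tfrac{1}{2\sqrt{k}}-\tfrac{\varepsilon_0}{2}\bigr)\diam(Q_j)$, still short of $\tfrac34$ for larger $k$). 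To obtain the stated constant, compare $x$ instead with its coordinatewise projection $y$ onto $Q_j$: then $|x-y|\le\tfrac{\varepsilon_0}{2}\diam(Q_j)$ while $d(y,E)\ge\diam(Q_j)$, so $d(x,E)\ge\bigl(1-\tfrac{\varepsilon_0}{2}\bigr)\diam(Q_j)\ge\tfrac{7}{8}\diam(Q_j)>\tfrac34\diam(Q_j)$; the upper bound and the remaining items go through as you outline.
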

\noindent Here the notation $A_j\approx B_l$ means that there exist positive constants $\gamma, \Gamma$, depending only on the dimension $k$, such that $\gamma A_j\leq B_l\leq \Gamma A_j$ for all $j, l$ satisfying the properties specified in each case.
The following Proposition summarizes the basic properties of the Whitney partition of unity associated to these cubes.
\begin{proposition} \label{propositionpartition}
There exists a sequence of functions $\lbrace \varphi_j \rbrace_j$ defined on $\R^k \setminus E$ such that
\begin{itemize}
\item[(i)] $\varphi_j \in C^\infty(\R^k \setminus E).$ 
\item[(ii)] $0\leq \varphi_j \leq 1$ on $\R^k \setminus E$ and $\sop (\varphi_j) \subseteq Q_j^*.$ 
\item[(iii)] $\sum_j \varphi_j =1$ on $\R^k \setminus E$. 
\item[(vi)] For every multiindex $\alpha$ there exists a constant $A_\alpha>0$, depending only on $\alpha $ and on the dimension $k$, such that
$$
| D^\alpha \varphi_j (x) | \leq A_\alpha \diam(Q_j)^{-|\alpha|},
$$
for all $x\in \R^k \setminus E$ and for all $j$. 
\end{itemize} 
\end{proposition}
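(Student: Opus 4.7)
The plan is to give the standard explicit construction of the Whitney partition of unity associated to the cubes $\{Q_j^*\}_j$ and verify the four claimed properties.

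First, fix once and for all a smooth radial bump $\psi_0 \in C^\infty(\R^k)$ with $0 \leq \psi_0 \leq 1$, $\psi_0 \equiv 1$ on the closed unit cube $[-\tfrac12,\tfrac12]^k$, and $\operatorname{supp}(\psi_0) \subset (1+\varepsilon_0)[-\tfrac12,\tfrac12]^k$. For each Whitney cube $Q_j$, with center $c_j$ and side length $s_j \approx \diam(Q_j)$, define the rescaled bump $\psi_j(x) := \psi_0\bigl((x-c_j)/s_j\bigr)$. Then $\psi_j \in C^\infty(\R^k)$ with $0 \le \psi_j \le 1$, $\psi_j \equiv 1$ on $Q_j$, $\operatorname{supp}(\psi_j) \subset Q_j^*$, and by the chain rule together with the fixed bounds on derivatives of $\psi_0$,
\[
|D^\alpha \psi_j(x)| \leq B_\alpha \diam(Q_j)^{-|\alpha|}
\]
for every multi-index $\alpha$, with $B_\alpha$ depending only on $\alpha$ and $k$.

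Second, set $\Phi(x) := \sum_j \psi_j(x)$ for $x \in \R^k \setminus E$. By Proposition~\ref{propositioncubes}(v) this sum is locally finite, so $\Phi \in C^\infty(\R^k \setminus E)$. Since every $x \in \R^k\setminus E$ lies in some $Q_j \subset Q_j^*$ (so $\psi_j(x)=1$), we have $1 \leq \Phi(x) \leq N$ on $\R^k\setminus E$. Now define
\[
\varphi_j(x) := \frac{\psi_j(x)}{\Phi(x)}, \qquad x \in \R^k\setminus E.
\]
Properties (i)–(iii) are then immediate: $\varphi_j$ is smooth on $\R^k\setminus E$, takes values in $[0,1]$, is supported in $Q_j^*$, and $\sum_j \varphi_j \equiv 1$.

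Third, the derivative bound (iv) is the only real point requiring work. Fix $x\in \R^k\setminus E$ and a multi-index $\alpha$; we may assume $x\in Q_j^*$, since otherwise $\varphi_j \equiv 0$ near $x$. By Proposition~\ref{propositioncubes}(v) only indices $l$ with $x\in Q_l^*$ contribute to $\Phi$ near $x$, and by (viii) each such $l$ satisfies $\diam(Q_l) \approx \diam(Q_j)$. Consequently, for every multi-index $\beta$,
\[
|D^\beta \Phi(x)| \leq \sum_{l:\, x\in Q_l^*} |D^\beta \psi_l(x)| \leq N\, B_\beta\, C^{|\beta|}\, \diam(Q_j)^{-|\beta|}
\]
for a dimensional constant $C$. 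Writing $\varphi_j = \psi_j \cdot \Phi^{-1}$, applying the Leibniz rule, and using Fa\`a di Bruno (or induction on $|\alpha|$) on $\Phi^{-1}$ together with the uniform lower bound $\Phi \ge 1$, one obtains $|D^\alpha \varphi_j(x)| \leq A_\alpha \diam(Q_j)^{-|\alpha|}$ with $A_\alpha$ depending only on $\alpha$ and $k$.

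The main obstacle is the last step: keeping track of the scaling in the Leibniz/Fa\`a di Bruno expansion and using the comparability $\diam(Q_l)\approx\diam(Q_j)$ from Proposition~\ref{propositioncubes}(viii) to ensure that every factor produced by differentiating $\Phi^{-1}$ contributes the \emph{same} scale $\diam(Q_j)^{-1}$ per derivative, rather than a scale depending on the auxiliary indices $l$. Once this bookkeeping is done, all four conclusions follow; as this is a well-known construction (see \cite[Chapter VI, \S 1--2]{Stein}), we will indicate the argument and refer the reader there for the full details.
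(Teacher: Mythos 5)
The paper does not give its own proof of this Proposition; it is stated as standard Whitney machinery with a reference to Stein's Chapter VI, and your construction (a fixed bump rescaled to each cube, normalization by the locally finite sum $\Phi=\sum_j\psi_j\in[1,N]$, and derivative control via bounded overlap together with the comparability $\diam(Q_l)\approx\diam(Q_j)$ from Proposition~\ref{propositioncubes}(viii)) is exactly the argument found there. Your proof is therefore correct and takes essentially the same approach as the source the paper relies on.
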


The statements contained in the following Proposition must look fairly obvious to those readers well acquainted with Whitney's techniques, but we have not been able to find an explicit reference, so we include a proof for the general reader's convenience.

\begin{proposition}\label{c1wregularization}
Consider the family of cubes $\lbrace Q_j \rbrace_j$ asociated to $\R^k \setminus E$ and its partition of unity $\lbrace \varphi_j \rbrace_j$ as in the preceding Propositions. Suppose that there is a sequence of nonnegative numbers $\lbrace p_j \rbrace_j$ and a positive constant $\lambda>0$ such that $p_j \leq \lambda \: \omega ( \diam(Q_j) ) \diam(Q_j),$ for every $j$. Then, the function defined by
$$
 \varphi(x)   =  \left\lbrace
	\begin{array}{ccl}
	\sum_j p_j \: \varphi_j(x) & \mbox{if } & x\in \R^k \setminus E, \\
	0  & \mbox{if }&  x\in E
	\end{array}
	\right.
$$
is of class $C^{1,\omega}(\R^k)$ and there is a constant $\gamma (k)>0$, depending only on the dimension $k$, such that
\begin{equation} \label{gammaninequality}
M(\nabla \varphi , \R^k ):= \sup_{x,y\in \R^k,\: x \neq y} \frac{|\nabla \varphi (x)- \nabla \varphi (y)|}{\omega\left(|x-y|\right)} \leq \gamma (k) \lambda.
\end{equation}
\end{proposition}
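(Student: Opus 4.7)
The plan is to verify the regularity of $\varphi$ on the open set $\R^k\setminus E$ first, then to establish differentiability with null gradient at each point of $E$, and finally to bound $|\nabla\varphi(x)-\nabla\varphi(y)|$ by $\omega(|x-y|)$ through a case analysis based on the positions of $x,y$ relative to $E$. On $\R^k\setminus E$, property (v) of Proposition \ref{propositioncubes} makes the sum defining $\varphi$ locally finite, so $\varphi$ is $C^\infty$ there, and in particular the identity $\sum_j D^\alpha\varphi_j\equiv 0$ holds for every $|\alpha|\ge 1$, which will be essential for the later estimates.

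Next I would show that for every $x_0\in E$, $\varphi$ is differentiable at $x_0$ with $\nabla\varphi(x_0)=0$. For $x\in\R^k\setminus E$, only those $j$ with $x\in Q_j^*$ contribute (at most $N$ indices), and property (vii) gives $\diam(Q_j)\approx d(x,E)\leq |x-x_0|$; hence the hypothesis $p_j\leq \lambda\omega(\diam Q_j)\diam(Q_j)$ together with the monotonicity $\omega(\lambda t)\leq\lambda\omega(t)$ for $\lambda\geq 1$ yields $p_j\lesssim \lambda\omega(|x-x_0|)|x-x_0|$, so $|\varphi(x)-0|\lesssim \lambda\omega(|x-x_0|)|x-x_0|$, which proves differentiability at $x_0$ with null gradient.

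For the modulus estimate I would distinguish three cases. If $x,y\in E$ there is nothing to check. If $x\in E$ and $y\in\R^k\setminus E$, then for each $Q_j^*\ni y$ property (vii) yields $\diam(Q_j)\lesssim d(y,E)\leq |x-y|$, so using $\omega(\lambda t)\leq\lambda\omega(t)$ for $\lambda\geq 1$ and Proposition \ref{propositionpartition}(iv) with $|\alpha|=1$,
\begin{equation*}
|\nabla\varphi(y)|\leq \sum_{Q_j^*\ni y} p_j|\nabla\varphi_j(y)|\leq \sum_{Q_j^*\ni y} A_1\lambda\,\omega(\diam Q_j)\lesssim N A_1\lambda\,\omega(|x-y|).
\end{equation*}
If instead $x,y\in\R^k\setminus E$, I would split into two subcases depending on the ratio $|x-y|/d(x,E)$. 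When $|x-y|\geq \tfrac{1}{2}d(x,E)$, I pick $x_0\in E$ with $|x-x_0|=d(x,E)\leq 2|x-y|$, so that $|y-x_0|\leq 3|x-y|$, and reduce to the previous case applied to the pairs $(x_0,x)$ and $(x_0,y)$.

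The delicate subcase is $|x-y|<\tfrac{1}{2}d(x,E)$, where the segment $[x,y]$ stays inside $\R^k\setminus E$ at distance comparable to $d(x,E)$ from $E$. Here I would bound $D^2\varphi$ pointwise by subtracting a judicious constant: for $z\in[x,y]$, choose any $j_0$ with $z\in Q_{j_0}^*$ and use $\sum_j D^2\varphi_j(z)=0$ to write
\begin{equation*}
D^2\varphi(z)=\sum_{Q_j^*\ni z}(p_j-p_{j_0})D^2\varphi_j(z).
\end{equation*}
Property (viii) of Proposition \ref{propositioncubes} forces $\diam(Q_j)\approx\diam(Q_{j_0})$ for all such $j$, so $|p_j-p_{j_0}|\lesssim \lambda\omega(\diam Q_{j_0})\diam(Q_{j_0})$; combined with $|D^2\varphi_j(z)|\lesssim \diam(Q_j)^{-2}\approx \diam(Q_{j_0})^{-2}$ from Proposition \ref{propositionpartition}(iv) and the bound $N$ on the number of overlaps, this gives $|D^2\varphi(z)|\lesssim \lambda\,\omega(d(x,E))/d(x,E)$. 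The mean value inequality then yields $|\nabla\varphi(x)-\nabla\varphi(y)|\lesssim \lambda|x-y|\,\omega(d(x,E))/d(x,E)$. The decisive step is to convert this to a multiple of $\omega(|x-y|)$: since $\mu:=|x-y|/d(x,E)\in(0,1)$, the concavity property $\omega(\mu t)\geq \mu\omega(t)$ with $t=d(x,E)$ gives $\omega(|x-y|)\geq \mu\,\omega(d(x,E))$, i.e.\ $|x-y|\,\omega(d(x,E))/d(x,E)\leq \omega(|x-y|)$, which closes the estimate.

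I expect the main obstacle to be exactly this last subcase: locating the right anchor constant $p_{j_0}$ to exploit the cancellation $\sum_j D^2\varphi_j=0$, and then applying the two directions of $\omega$'s growth (namely $\omega(\lambda t)\leq \lambda\omega(t)$ for $\lambda\geq 1$ in Cases 2 and 3a, and $\omega(\mu t)\geq \mu\omega(t)$ for $\mu\leq 1$ in Case 3b) in exactly the right places. The constant $\gamma(k)$ is the accumulation of $N=12^k$, the Whitney constants $A_1,A_2$, and the absolute equivalence constants from Proposition \ref{propositioncubes}(iii),(vii),(viii).
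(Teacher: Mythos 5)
Your proposal is correct and follows essentially the same path as the paper's proof: smoothness away from $E$, a pointwise estimate forcing differentiability with null gradient on $E$, and a far/near case split with a mean-value bound on $D^2\varphi$ in the near case, finished off by the two monotonicity inequalities for $\omega$. The one structural divergence is the anchor subtraction in your hard subcase: you invoke $\sum_j D^2\varphi_j\equiv 0$ to replace $p_j$ by $p_j-p_{j_0}$, but this contributes nothing here. Since the hypothesis gives a direct bound $p_j\leq\lambda\,\omega(\diam Q_j)\diam Q_j$ on each coefficient, one can estimate $|D^2\varphi(z)|\leq\sum_{Q_j^*\ni z} p_j\,|D^2\varphi_j(z)|\lesssim \lambda\,\omega(\diam Q_{j_0})/\diam Q_{j_0}$ directly (this is what the paper does); your bound $|p_j-p_{j_0}|\lesssim\lambda\,\omega(\diam Q_{j_0})\diam Q_{j_0}$ is in fact obtained just by the triangle inequality from the individual bounds on $p_j$ and $p_{j_0}$ together with $\diam Q_j\approx\diam Q_{j_0}$, so the cancellation is never genuinely exploited. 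The anchor device would be essential only if one had bounds on differences of neighbouring $p_j$'s rather than on the $p_j$'s themselves; as it stands it is a harmless detour, and the rest of your estimate closes exactly as in the paper.
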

\begin{proof}
Let us start with the proof of (i). Since every point in $\R^k \setminus E$ has an open neighbourhood which intersects at most $N=(12)^k$ cubes, and all the functions $\varphi_j$ are of class $C^\infty,$ it is clear that $\varphi \in C^\infty(\R^k \setminus E).$ Given a point $x\in \R^k$, by our assumptions on the sequence $\lbrace p_j \rbrace_j$ we easily have
$$
\varphi(x)= \sum_{Q_j^* \ni x} p_j \varphi_j(x) \leq \lambda \sum_{Q_j^* \ni x} \omega ( \diam(Q_j) ) \diam(Q_j) \varphi_j(x).
$$
Using Proposition \ref{propositioncubes} we have that $\diam(Q_j) \leq \frac{4}{3} d(x,E)$ for those $j$ such that $x\in Q_j^*.$ Then, we can write
\begin{align*}
\varphi(x) & \leq \lambda \sum_{Q_j^* \ni x} \omega \left( \frac{4}{3} d(x,E) \right) \frac{4}{3} d(x,E) \varphi_j(x) \leq \\
&   \lambda \left( \frac{4}{3} \right)^2 \omega ( d(x,E)) d(x,E) \sum_{Q_j^* \ni x} \varphi_j(x) = \left( \frac{4}{3} \right)^2 \lambda \omega ( d(x,E)) d(x,E).
\end{align*}
In particular, due to the fact that $\omega(0^+)=0,$ the above estimation shows that $\varphi$ is differentiable on $E$, with $\nabla \varphi = 0$ on $C.$ We next give an estimation for $\nabla \varphi$. Bearing in mind Proposition \ref{propositionpartition}, we set 
\begin{equation} \label{definitionAn}
A= A(k):= \max \lbrace \sqrt{k} A_\alpha, \: |\alpha| \leq 2 \rbrace.
\end{equation}

Given a point $x\in \R^k$, we have that
\begin{align*} 
|\nabla \varphi (x) | & \leq \sum_{Q_j^* \ni x} p_j  | \nabla\varphi_j (x) | \leq A \sum_{Q_j^* \ni x} p_j \diam(Q_j)^{-1} \leq  A  \lambda \sum_{Q_j^* \ni x} \omega ( \diam(Q_j) )\\
&\leq  A \: \lambda  \sum_{Q_j^* \ni x} \omega \left( \frac{4}{3} d(x,E) \right) \leq \frac{4 A N }{3} \lambda \: \omega ( d(x,E)).
\end{align*}
Summing up,
\begin{equation} \label{boundgradientphi}
|\nabla \varphi (x) | \leq \frac{4 A N }{3} \lambda \: \omega ( d(x,E)) \quad \text{for every} \quad x\in \R^k.
\end{equation}
Note that the above shows inequality \eqref{gammaninequality} when $x\in \R^k$ and $y\in E$. Hence, in the rest of the proof we only have consider the situation where $x,y$ are such that $x\neq y$ and $x,y\in \R^k \setminus E$. However, we will still have to separately consider two cases. Let us denote $L:=[x,y],$ the line segment connecting $x$ to $y.$ 

\noindent {\bf Case 1:} $d(L,E) \geq |x-y|.$ Take a multi-index $\alpha$ with $|\alpha|=1.$ Because in this case the segment $L$ is necessarily contained in $\R^k \setminus E$, and the function $\varphi$ is of class $C^2$ on $\R^k\setminus E$, we can write
\begin{equation}\label{differencegradients}
| D^\alpha \varphi (x)- D^\alpha \varphi(y) | \leq \left( \sup_{z\in L} | \nabla( D^\alpha \varphi )(z) | \right) |x-y|.
\end{equation}
Using Proposition \ref{propositionpartition} and the definition of $A$ in \eqref{definitionAn}, we may write
$$
| \nabla( D^\alpha \varphi )(z) |  \leq \sum_{Q_j^* \ni z} p_j | \nabla( D^\alpha \varphi_j )(z) | \leq A \sum_{Q_j^* \ni z} p_j \diam(Q_j)^{-2} \leq 
 A \lambda \sum_{Q_j^* \ni z} \frac{\omega(\diam(Q_j))}{\diam(Q_j)}.
 $$
By Proposition \ref{propositioncubes}, we have that $(6+\varepsilon_0) \diam(Q_j) \geq d(z,E) \geq d(L,E) \geq |x-y|$ for those $j$ with $Q_j^* \ni z$, and by the properties of $\omega,$ we have that
$$
A \lambda \sum_{Q_j^* \ni z} \frac{\omega(\diam(Q_j))}{\diam(Q_j)} \leq A \lambda \sum_{Q_j^* \ni z} \frac{\omega \left( \frac{|x-y|}{6+\varepsilon_0} \right)}{\frac{|x-y|}{6+\varepsilon_0}} \leq (6+\varepsilon_0) A N \lambda \frac{\omega(|x-y|)}{|x-y|}.
$$
Therefore, by substituting in \eqref{differencegradients} we find  that
$$
| \nabla \varphi (x)- \nabla \varphi (y) | \leq (6+\varepsilon_0) \sqrt{k} A N \lambda \omega(|x-y|).
$$

\noindent {\bf Case 2:} $d(L,E) \leq |x-y|.$ Take points $x'\in L$ and $y'\in E$ such that $d(L,E) = |x'-y'| \leq |x-y|.$ We have that
$$
|x-y'| \leq |x-x'| +|y'-x'| \leq |x-y| + |x'-y'| \leq 2 |x-y|,
$$
and similarly we obtain $|y-y'| \leq 2 |x-y|.$ Hence, if we use \eqref{boundgradientphi} we obtain
\begin{align*}
&| \nabla \varphi (x) - \nabla \varphi (y) |  \leq | \nabla \varphi (x) | + | \nabla \varphi (y) | \leq \frac{4 A N }{3} \lambda \left( \omega(d(x,E)) + \omega (d(y,E)) \right)  \\ &  \leq \frac{4 A N }{3} \lambda \left( \omega(|x-y'|) + \omega (|y-y'|) \right) \leq \frac{8 A N }{3} \lambda \omega( 2|x-y| ) \leq \frac{16 A N }{3} \lambda \omega ( |x-y| ).
\end{align*}

If we call
$$
\gamma(k)=\max\left\lbrace \frac{4 A N }{3},(6+\varepsilon_0) \sqrt{k} A N, \frac{16 A N }{3} \right\rbrace = (6+\varepsilon_0) \sqrt{k} A N,
$$
we get \eqref{gammaninequality}. 
\end{proof}

Let us continue with the proof of Theorem \ref{C1omega convex extension}. We consider a decomposition of $\R^k \setminus \overline{P(C)}$ into Whitney's cubes $\{ Q_j, Q_j^* \}_j$ and its asociated partition of unity $\{ \varphi_j \}_j$ (see Propositions \ref{propositioncubes} and \ref{propositionpartition}). If we define
$$
p_j:= \sup_{x\in Q_j^*} | c(x)-\widetilde{c}(x) | \quad \text{for all} \quad j,
$$
we have, thanks to \eqref{differencesctildec}, that $p_j \lesssim M \omega(\diam(Q_j))\diam(Q_j)$ for all $j$. Then, according to the preceding Proposition, the function
$$
 \varphi(x)  : =  \left\lbrace
	\begin{array}{ccl}
	\sum_j p_j \: \varphi_j(x) & \mbox{if } & x\in \R^k \setminus \overline{P(C)}, \\
	0  & \mbox{if }&  x\in \overline{P(C)}
	\end{array}
	\right.
$$
is of class $C^{1,\omega}(\R^k)$ with $\varphi=0$ and $\nabla \varphi=0$ on $\overline{P(C)}$ and $M(\nabla \varphi, \R^k) \lesssim M.$ In addition, by the definition of $\{ p_j \}_j$ we easily see that $\varphi \geq |c-\widetilde{c}|$ on $\R^k.$ Now we set $\psi:= \widetilde{c} + \varphi$ on $\R^k,$ which is of class $C^{1,\omega}(\R^k)$ with $\psi=\widetilde{c}=c$ and $\nabla \psi= \nabla \widetilde{c}= \nabla c$ on $\overline{P(C)}$, and also $M(\nabla \psi, \R^k) \lesssim M.$ On the other hand, it is clear that $\psi \geq c$ on $\R^k$ and this in particular implies that $\lim_{|x| \to \infty} \psi(x) = +\infty.$ 

Now we will use the following.

\begin{theorem}[Kirchheim-Kristensen]\label{KKtheorem}
If $H : \R^k \to \R$ is an $\omega$-differentiable function on $\R^k$ such that $\lim_{|x| \to +\infty} H (x) = +\infty,$ then the convex envelope $\textrm{conv}(H)$ of $H$ is a convex function of class $C^{1,\omega}(\R^k)$, and
$$
M( \nabla \textrm{conv}(H), \R^k) \leq 4 (n+1) M(\nabla H, \R^k).
$$
\end{theorem}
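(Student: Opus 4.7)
My plan is to exploit the contact-point structure that coercivity imposes on the convex envelope so as to collapse its subdifferential to a single value at each point, and then push a Taylor-type estimate through the convex-combination representation.

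First I would use the coercivity $\lim_{|u|\to\infty} H(u) = +\infty$ together with Carathéodory's theorem (and a standard compactness argument) to show that for every $x \in \R^k$ the infimum in the Carathéodory formula is attained: there exist $\lambda_j \geq 0$ with $\sum_{j=1}^{k+1} \lambda_j = 1$ and points $x_j$ with $x = \sum \lambda_j x_j$ and $\textrm{conv}(H)(x) = \sum \lambda_j H(x_j)$. Any $x_j$ with $\lambda_j > 0$ must then be a contact point ($H(x_j) = \textrm{conv}(H)(x_j)$), and $\textrm{conv}(H)$ is affine on the simplex they span. Next I would show $\textrm{conv}(H) \in C^1(\R^k)$: at a contact point $x_0$, Lemma \ref{differentiability criterion for convex functions} applied to $\phi = \textrm{conv}(H)$ and $\psi = H$ forces $\partial \textrm{conv}(H)(x_0) = \{\nabla H(x_0)\}$. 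At an arbitrary $x = \sum \lambda_j x_j$, any $p \in \partial \textrm{conv}(H)(x)$ also lies in $\partial \textrm{conv}(H)(x_j)$ for each $j$ with $\lambda_j > 0$ (because $\textrm{conv}(H)$ is affine of slope $p$ on the containing simplex), so $p = \nabla H(x_j)$ for every such $j$. In particular all those gradients agree and $\partial \textrm{conv}(H)(x)$ is itself a singleton, giving $\textrm{conv}(H)$ differentiable at $x$ with gradient equal to that common value.

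For the modulus-of-continuity bound, set $M_H := M(\nabla H, \R^k)$ and note that, by integrating along segments, $|H(u) - H(v) - \langle \nabla H(v), u-v\rangle| \leq M_H |u-v|\omega(|u-v|)$. Fix $x,y \in \R^k$ with gradients $p_x, p_y$ of $\textrm{conv}(H)$. The key trick is to translate the contact-point decomposition of $x$: for \emph{any} $z \in \R^k$ let $z_j := x_j + (z-x)$, so $z = \sum \lambda_j z_j$. Applying the Taylor estimate to each $H(z_j)$ and using $\nabla H(x_j) = p_x$ whenever $\lambda_j > 0$, the linear terms collapse to a single $\langle p_x, z-x\rangle$ and one obtains
\[ \textrm{conv}(H)(z) - \textrm{conv}(H)(x) - \langle p_x, z-x\rangle \leq M_H\,|z-x|\,\omega(|z-x|). \]
Test this with $z = y + sv$ for $|v|=1$ and $s > 0$, combine with the convex lower bound $\textrm{conv}(H)(z) \geq \textrm{conv}(H)(y) + s\langle p_y, v\rangle$, discard the nonnegative term $\textrm{conv}(H)(y) - \textrm{conv}(H)(x) - \langle p_x, y-x\rangle \geq 0$, and then pick $v = (p_y - p_x)/|p_y - p_x|$ together with $s = |y-x|$; using $\omega(2t) \leq 2\omega(t)$ this yields $|p_y - p_x| \leq 4 M_H \omega(|y-x|)$, which is at least as strong as the stated constant $4(n+1) M_H$.

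The main obstacle is the global $C^1$ step in the middle: without coercivity the Carathéodory infimum need not be attained, so one has no honest contact points to force affineness of $\textrm{conv}(H)$ on a simplex containing $x$, and the subdifferential could in principle be multi-valued. Once coercivity provides this rigid structure, the gradient estimate drops out of one convexity-plus-Taylor comparison against the shifted decomposition, and the Carathéodory bound $k+1$ on the number of contact points never enters the final constant.
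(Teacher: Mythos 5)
The paper does not actually prove this theorem: it cites Kirchheim--Kristensen and remarks that the $C^{1,\omega}$ bound ``follows, with some easy extra work, from their proof.'' Your proposal is therefore a genuinely different, self-contained argument, and most of it is correct and quite elegant. The contact-point analysis and the use of Lemma \ref{differentiability criterion for convex functions} to make $\partial\,\textrm{conv}(H)$ a singleton is standard, and the shifted decomposition $z_j = x_j + (z-x)$ is a nice trick: since all $\nabla H(x_j) = p_x$, the linear terms in the $\omega$-Taylor bound collapse and you obtain the key one-sided estimate $\textrm{conv}(H)(z) \leq \textrm{conv}(H)(x) + \langle p_x, z-x\rangle + M_H\,|z-x|\,\omega(|z-x|)$ for all $z$, from which the constant $4$ (sharper than the paper's $4(n+1)$) drops out by the convexity--Taylor comparison you describe.

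The one genuine gap is the opening step: attainment of the Carathéodory infimum is \emph{not} a ``standard compactness argument'' under the stated hypothesis $\lim_{|x|\to\infty}H(x)=+\infty$. Since $\nabla H$ is $\omega$-continuous, $H$ may grow only linearly (e.g.\ if $\omega$ is bounded), and then the sublevel sets of $H - \ell$ (with $\ell$ the supporting affine function at $x$) need not be bounded, so a minimizing sequence may have $\lambda_j^{(m)}\to 0$ with $|x_j^{(m)}|\to\infty$ in a way that cannot simply be discarded. Superlinear coercivity would give the compactness you want, but that is a strictly stronger hypothesis than the one in the statement. Fortunately your proof can be repaired without attainment: run the shifted decomposition along a minimizing sequence $(\lambda_j^{(m)},x_j^{(m)})$, put $q^{(m)} := \sum_j\lambda_j^{(m)}\nabla H(x_j^{(m)})$, get $\textrm{conv}(H)(z) \leq \textrm{conv}(H)(x) + \epsilon_m + \langle q^{(m)},z-x\rangle + M_H|z-x|\omega(|z-x|)$, and combine with the lower bound from $p\in\partial\,\textrm{conv}(H)(x)$ to deduce (choosing $z=x+t(p-q^{(m)})/|p-q^{(m)}|$ and sending $t\to 0$ at a suitable rate relative to $\epsilon_m$) that $q^{(m)}\to p$. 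This yields the estimate $(*)$, simultaneously shows $\partial\,\textrm{conv}(H)(x)$ is a singleton, and the remainder of your argument then goes through unchanged. You should either include this limiting argument or explicitly assume superlinear coercivity; as written the ``standard compactness'' claim is not justified.
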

\noindent As a matter of fact, Theorem \ref{KKtheorem} is a restatement of a particular case of Kirchheim and Kristensen's result in \cite{KirchheimKristensen}, and the estimation on the modulus of continuity of the gradient of the convex envelope does not appear in their original statement, but it does follow, with some easy extra work, from their proof.

Since $\psi \in C^{1,\omega}(\R^k)$ with $M( \nabla \psi , \R^k) \lesssim M$ and $\lim_{|x|\to \infty} \psi(x)=+\infty,$ if we define $\widetilde{F} := \textrm{conv}(\psi),$ Theorem \ref{KKtheorem} implies that our function $\widetilde{F}$ is a convex function of class $C^{1,\omega}(\R^k)$ and satisfies $M( \nabla \widetilde{F}, \R^k ) \lesssim M.$ Since $\psi \geq c$ and $c$ is convex, we have $\widetilde{F} \geq c$ on $\R^k.$ On the other hand, because $\psi=c$ on $\overline{P(C)},$ we must have $\widetilde{F} \leq c$ on $\overline{P(C)}$ and therefore $\widetilde{F}=c$ on $\overline{P(C)}.$
Moreover, because $c \leq \widetilde{F}, \: c$ is convex and $c= \widetilde{F}$ on $\overline{P(C)},$ Lemma \ref{differentiability criterion for convex functions} implies that $\nabla \widetilde{F} = \nabla c$ on $\overline{P(C)}.$ 

Finally we define $F:= \widetilde{F} \circ P$ on $\Rn.$ We immediately see that $F$ is convex and of class $C^{1,\omega}$ on $\R^n$. Thanks to equation \eqref{factorizationminimal} and the fact that $\widetilde{F}= c$ and $\nabla \widetilde{F}= \nabla c$ on $P(C)$ we have for all $x\in C,$
$$
F(x)= \widetilde{F}(P(x)) = c (P(x))= m_C(f)(x)=f(x) \quad \text{and}
$$
$$
\nabla F(x) = \nabla \tilde{F} (P(x)) \circ D P = \nabla c(P(x)) \circ D P = \nabla m_C(f)(x) = \nabla f(x)=G(x).
$$
Since $M( \nabla \widetilde{F}, \R^k ) \lesssim M,$ for every $x,y\in \R^n,$ the gradient of $F$ satisfies
$$
|\nabla F(x)- \nabla F(y) | = |\nabla \widetilde{F}(P(x))- \nabla \widetilde{F}(P(y))| \lesssim M \omega(|P(x)-P(y)|)\lesssim M \omega(|x-y|).
$$
This allows us to conclude the existence of a constant $\gamma(n)>0$ depending only on $n$ such that
$$
M(\nabla F, \R^n) := \sup_{x\neq y,\: x,y\in\R^n} \frac{|\nabla F(x)-\nabla F(y)|}{\omega(|x-y|)} \leq \gamma(n) M.
$$
The proof of Theorem \ref{C1omega convex extension} is complete.


\medskip

\subsection{Proof of Theorem \ref{corollary for C11 convex bodies}}
\noindent $(2)\implies (1)$: Set $C=K\cup\{0\}$. We may assume $\eta>0$ small enough, and choose $\alpha>0$ sufficiently close to $1$, so that
$$
0<1-\alpha +\frac{\eta}{2M} <\min_{y\in K}\langle N(y), y\rangle
$$
(this is possible thanks to condition $(\mathcal{O})$, $M$-Lipschitzness of $N$ and compactness of $K$; notice in particular that $0\notin K$).
Now define $f:C\to \R$ and $G:C\to\R^n$ by
$$
 f(y)   =  \left\lbrace
	\begin{array}{ccl}
	1 & \mbox{ if } y\in K \\
	\alpha  & \mbox{ if }  y=0,
	\end{array}
	\right. \textrm{ and }
 G(y)   =  \left\lbrace
	\begin{array}{ccl}
	N(y) & \mbox{ if } y\in K \\
	0  & \mbox{ if }  y= 0.
	\end{array}
	\right.
$$
By using condition $(\mathcal{K}\mathcal{W}^{1,1})$,
it is straightforward to check that $(f,G)$ satisfies condition $(CW^{1,1})$. Therefore, according to Theorem \ref{C11 convex extension}, there exists a convex function $F\in C^{1,1}(\R^n)$ such that $F=f$ and $\nabla F=G$ on $C$, and the proof of Theorem \ref{C11 convex extension} indicates that $F$ can be taken so as to satisfy $\lim_{|x|\to\infty}F(x)=\infty$. We now define
$V=\{x\in\R^n \, : \, F(x)\leq 1\}$. The rest of the proof is similar to that of Theorem \ref{corollary for C1 convex bodies}, and we leave it to the reader's care. $\Box$

\section*{Acknowledgement}
The authors wish to thank the referee for several suggestions that improved the exposition, and for the statement of Theorem \ref{second quantitative version of C11 convex extension}.



\begin{thebibliography}{}

\bibitem{Azagra}
D. Azagra, {\em Global and fine approximation of convex functions}. Proc. Lond. Math. Soc. (3) 107 (2013), no. 4, 799--824.

\bibitem{AFLR}
D. Azagra, J. Ferrera, F. L\'opez-Mesas and Y. Rangel, {\em Smooth approximation of Lipschitz functions on Riemannian manifolds}.  J. Math. Anal. Appl. 326 (2007), 1370--1378.

\bibitem{AM}
D. Azagra and C. Mudarra, {\em Smooth convex extensions of convex functions}, preprint, arXiv:1501.05226v7 [math.CA]

\bibitem{BrudnyiBrudnyi}
A. Brudnyi, Y. Brudnyi, {\em Methods of geometric analysis in extension and trace problems. Volumes 1 and 2.}
Monographs in Mathematics, 102 and 103. Birkh\"auser/Springer Basel AG, Basel, 2012.

\bibitem{BrudnyiShvartsman}
Y. Brudnyi, P. Shvartsman, {\em Whitney's extension problem for multivariate $C^{1, \omega}$-functions}. Trans. Am. Math. Soc. 353 (2001), 2487--2512.

\bibitem{Fefferman2005}
C. Fefferman, {\em A sharp form of Whitney's extension theorem}. Ann. of Math. (2) 161 (2005), no. 1, 509--577.

\bibitem{Fefferman2006}
C. Fefferman, {\em Whitney's extension problem for $C^{m}$}. Ann. of Math. (2) 164 (2006), no. 1, 313--359.

\bibitem{FeffermanSurvey}
C. Fefferman, {\em Whitney's extension problems and interpolation of data}. Bull. Amer. Math. Soc. (N.S.) 46 (2009), no. 2, 207--220.

\bibitem{GhomiJDG2001}
M. Ghomi, {\em Strictly convex submanifolds and hypersurfaces of positive curvature}. J. Differential Geom. 57 (2001), 239--271.

\bibitem{GhomiPAMS2002}
M. Ghomi, {\em The problem of optimal smoothing for convex functions}. Proc. Amer. Math. Soc. 130 (2002) no. 8, 2255--2259.

\bibitem{GhomiBLMS2004}
M. Ghomi, {\em Optimal smoothing for convex polytopes}. Bull. London Math. Soc. 36 (2004), 483--492.

\bibitem{Glaeser}
G. Glaeser, {\em Etudes de quelques alg\`ebres tayloriennes}, J. d'Analyse 6 (1958), 1-124.

\bibitem{GW}
R. E. Greene, and H. Wu, {\em $C\sp{\infty }$ approximations of
convex, subharmonic, and plurisubharmonic functions}, Ann. Sci.
{\'E}cole Norm. Sup. (4) 12 (1979), no. 1, 47--84.

\bibitem{GriewankRabier}
A. Griewank, P.J. Rabier, {\em On the smoothness of convex envelopes}. Trans. Amer. Math. Soc. 322 (1990) 691--709.

\bibitem{KirchheimKristensen}
B. Kirchheim, J. Kristensen, {\em Differentiability of convex envelopes}. C. R. Acad. Sci. Paris S\'er. I Math. 333 (2001), no. 8, 725--728.

\bibitem{Mulansky}
B. Mulansky, M. Neamtu, {\em Interpolation and approximation from convex sets}, J. Approx. Theory 92 (1998), no. 1, 82--100. 

\bibitem{Rockafellar}
T. Rockafellar, {\em Convex Analysis}. Princeton Univ. Press, Princeton, NJ, 1970.

\bibitem{SchulzSchwartz}
K. Schulz, B. Schwartz, {\em Finite extensions of convex functions}.
Math. Operationsforsch. Statist. Ser. Optim. 10 (1979), no. 4, 501--509.

\bibitem{Stein}
E. Stein, {\em Singular integrals and differentiability properties of functions}. Princeton, University Press, 1970.

\bibitem{VeselyZajicek}
L. Vesel\'y, L. Zaj\'icek, {\em On extensions of d.c. functions and convex functions}. J. Convex Anal. 17 (2010), no. 2, 427--440.

\bibitem{Whitney}
H. Whitney, {\em Analytic extensions of differentiable functions defined in closed sets}, Trans. Amer. Math. Soc. 36 (1934), 63--89.

\bibitem{MinYan}
M. Yan, {\em Extension of Convex Function}. J. Convex Anal. 21 (2014) no. 4, 965--987.


\end{thebibliography}
\end{document}